\newcommand*{\mailto}[1]{\href{mailto:#1}{\nolinkurl{#1}}}
\newcommand{\arxiv}[1]{\href{http://arxiv.org/abs/#1}{arXiv: #1}}
\def\theequation{\@arabic\c@equation}
\newcommand{\bbN}{{\mathbb{N}}}
\newcommand{\bbR}{{\mathbb{R}}}
\newcommand{\bbS}{{\mathbb{S}}}
\renewcommand{\a}{\alpha}
\renewcommand{\b}{\beta}
\newcommand{\g}{\gamma}
\renewcommand{\t}{\theta}
\newcommand{\norm}[1]{\left\Vert#1\right\Vert}
\newcommand{\no}{\nonumber}
\newcommand{\lb}{\label}
\newcommand{\bi}{\bibitem}
\newcommand{\f}{\frac}
\newcommand{\ol}{\overline}
\newcommand{\bs}{\backslash}
\newcommand{\wti}{\widetilde}
\newcommand{\dott}{\,\cdot\,}
\newcommand{\pr}{\frac{\partial}{\partial r}}
\renewcommand{\Re}{\operatorname{Re}}
\renewcommand{\Im}{\operatorname{Im}}
\renewcommand{\ln}{\operatorname{ln}}
\numberwithin{equation}{section}
\newtheorem{theorem}{Theorem}[section]
\newtheorem{lemma}[theorem]{Lemma}
\newtheorem{corollary}[theorem]{Corollary}
\theoremstyle{definition}
\newtheorem{remark}[theorem]{Remark}
\begin{document}

\title[Weighted Rellich-Type Inequalities]{Factorizations and Power Weighted Rellich and Hardy--Rellich-Type Inequalities}

\author[F.\ Gesztesy]{Fritz Gesztesy}
\address{Department of Mathematics, 
Baylor University, Sid Richardson Bldg., 1410 S.~4th Street, Waco, TX 76706, USA}
\email{\mailto{Fritz\_Gesztesy@baylor.edu}}
\urladdr{\url{http://www.baylor.edu/math/index.php?id=935340}}

\author[M.\ M.\ H.\ Pang]{Michael M.\ H.\ Pang}
\address{Department of Mathematics,
University of Missouri, Columbia, MO 65211, USA}
\email{\mailto{pangm@missouri.edu}}
\urladdr{\url{https://www.math.missouri.edu/people/pang}}

\author[J.\ Parmentier]{Jake Parmentier}
\address{Department of Mathematics,
University of Missouri, Columbia, MO 65211, USA}
\email{\mailto{jmpft4@missouri.edu}}

\author[J.\ Stanfill]{Jonathan Stanfill}
\address{Department of Mathematics, The Ohio State University \\
100 Math Tower, 231 West 18th Avenue, Columbus, OH 43210, USA}
\email{\mailto{stanfill.13@osu.edu}}
\urladdr{\url{https://u.osu.edu/stanfill-13/}}

\dedicatory{Dedicated, with admiration, to Brian Davies on the occasion of his 80th birthday} 
\date{\today}
\@namedef{subjclassname@2020}{\textup{2020} Mathematics Subject Classification}
\subjclass[2020]{Primary: 35A23, 35J30; Secondary: 47A63, 47F05.}
\keywords{Weighted Rellich-type inequality, factorizations of differential operators.}

\begin{abstract}
We revisit and extend a variety of inequalities related to power weighted Rellich and Hardy--Rellich inequalities, including an inequality due to Schmincke. 
\end{abstract}

\maketitle


\section{Introduction} \lb{s1}

Brian Davies had a keen interest in Hardy and Rellich-type inequalities (see, e.g., \cite[Sect.~1.5]{Da89}, \cite[Sect.~5.3]{Da95}, \cite{Da95a}, \cite{Da99}, \cite{DH98}), and we hope that our modest contribution to this subject will cause him some joy. 

To set the stage, we briefly indicate the kind of inequalities that are considered in this paper:  
One of the inequalities we recover is the following sharp inequality by Caldiroli and Musina 
\cite[Theorem~3.1]{CM12}
\begin{align}\lb{1.1}
\begin{split}
\int_{\bbR^n} |x|^\g|(\Delta f)(x)|^2 \, d^n x 
 \geq C_{n,\g} \int_{\bbR^n} |x|^{\g-4} |f(x)|^2 \, d^n x,& \\
 \g\in\bbR, \; f \in C^{\infty}_0(\bbR^n \backslash \{0\}), \; n\in\bbN, \, n\geq2,&
 \end{split} 
\end{align}
where
\begin{equation}\lb{1.2}
C_{n,\g}=\min_{j\in\bbN_0}\Big\{\big[4^{-1}(n-2)^2 - 4^{-1}(\g-2)^2+j(j+n-2)\big]^2\Big\}.
\end{equation}
In addition, we will also derive the sharp inequality (sometimes called the Hardy--Rellich inequality), 
\begin{align}\lb{1.4}
\begin{split} 
\int_{\bbR^n} |x|^\gamma|(\Delta f)(x)|^2\, d^nx\geq A_{n,\gamma}\int_{\bbR^n}|x|^{\gamma-2}|(\nabla f)(x)|^2\, d^nx,&  \\
\gamma \in \bbR, \; f\in C_0^\infty(\bbR^n\backslash \{0\}),\; n\in\bbN,\, n\geq 2,&
\end{split} 
\end{align}
where
\begin{equation}\label{1.5}
A_{n,\gamma}={\min}_{j \in \bbN_0}\{\alpha_{n,\gamma,\lambda_j}\},
\end{equation}
with
\begin{align}
\alpha_{n,\gamma,\lambda_0}&=\alpha_{n,\gamma,0}=4^{-1}(n-\gamma)^2,     \no \\
\alpha_{n,\gamma,\lambda_j}&=\big[4^{-1}(n+\gamma-4)(n-\gamma)+j(j+n-2)\big]^2\Big/\big[4^{-1}(n+\gamma-4)^2+j(j+n-2)\big],     \no \\
& \hspace*{10.5cm} j\in\bbN.      \lb{1.5b}
\end{align}

In the unweighted case $\gamma = 0$ this simplifies to the known fact,
\begin{equation}
A_{n,0} = \begin{cases}  n^2/4, & n \geq 5, \\
3, & n=4, \\
25/36, & n=3, \\
0, & n=2,
\end{cases}      \lb{1.6}
\end{equation}
in particular, there is no nontrivial inequality of the type \eqref{1.4} in the case $n=2$, $\gamma = 0$.

Actually, we derive a number of additional Hardy--Rellich-type inequalities, and, as a representative example, we mention the following weighted version of Schmincke's inequality \cite{Sc72}: Consider the inequality
\begin{align}
\int_{\bbR^n} |x|^\g|(\Delta f)(x)|^2 \, d^n x 
& \geq - s \int_{\bbR^n} |x|^{\g-2} |(\nabla f)(x)|^2 \, d^n x  \no \\
& \quad + [(\g+n - 4)/4]^2 \big[(\g-n)^2+4s\big] \int_{\bbR^n} |x|^{\g-4} |f(x)|^2 \, d^n x.   \lb{1.8} \\
& \hspace*{2.8cm} \gamma \in\bbR, \; f\in C_0^\infty(\bbR^n\backslash\{0\}), \; n\in\bbN, \, n\geq2.   \no 
\end{align}
Then, the following assertions $(i)$ and $(ii)$ hold:
\begin{align} 
\begin{split} 
& \text{$(i)$ If } \, \gamma \in \big[2-(n-1)^{1/2}, 2+(n-1)^{1/2}\big], \, \text{ the estimate \eqref{1.8} holds} \\
& \quad \text{for $s \in \big[-2^{-1}\big\{(n-2)^2+(\g-2)^2\big\}, \infty\big)$.}    \lb{1.9}  
\end{split} \\
\begin{split} 
& \text{$(ii)$ If } \, \gamma \in \bbR \big\backslash \big[2-(n-1)^{1/2}, 2+(n-1)^{1/2}\big], \, \text{ the estimate  \eqref{1.8} holds} \\
& \quad \text{for $s \in \big[-2^{-1}\big\{(n-2)^2-(\g-2)^2\big\}+1-n, \infty\big)$.}     \lb{1.10}
\end{split} 
\end{align} 

Given the enormity of the literature on (power weighted) Hardy and Rellich-type inequalities, one is hard pressed to give an appropriate historical account. This applies, in particular, to the case of Hardy-type inequalities and hence we only refer to the standard monographs such as, \cite{ABG87}, \cite[Chs.~1--5]{BEL15}, \cite[p.~350--352]{EE18}, 
\cite[Parts~1--3]{GM13}, \cite[Ch.~6]{KLV21}, \cite[Chs.~3--10]{KMP07}, \cite[Chs.~1--4, Sect.~7.7]{KPS17}, \cite[Sects.~1.3, 2.7]{Ma11}, \cite[Chs.~1--3]{OK90}, and \cite[Ch.~2]{RS19}. Also in the case of Rellich and Hardy--Rellich inequalities we feel we have to restrict ourselves to a few selected references and some monographs in this context, such as, \cite{Al76}, \cite{Av17}, \cite{Av18}, \cite{Av21}, \cite{ANS11}, \cite[Chs.~1, 3, 6]{BEL15}, \cite{Ca20}, \cite{Ca21}, \cite{CM12}, \cite{CW01}, \cite{Co09}, \cite{DH98}, \cite{DLT19}, \cite{DLT20}, \cite{EE16}, \cite{EEL21}, \cite{Ev09}, \cite{GL18}, \cite{GM11}, \cite[Part~2]{GM13}, \cite{Hi04}, \cite[Ch.~6]{KLV21}, \cite{KY17}, \cite{MOW17}, \cite[Sects.~1.3, 2.7]{Ma11}, \cite{MNSS21}, \cite{MSS15},  \cite{Mu14}, \cite{Mu14b}, \cite[Ch.~2, Sect.~21]{OK90}, \cite{Ow99}, \cite{Re56}, \cite[Sect.~II.7]{Re69}, \cite{RS17}, \cite[Sects.~2.1.5, 2.3.1, 3.1, 3.3]{RS19}, \cite{RY20}, \cite{Si83}, \cite{Ya99}, and the references cited therein.

Thus far we basically focused on inequalities in $L^2(\bbR^n)$, yet much of the recent work on Rellich and higher-order Hardy inequalities aims at $L^p(\Omega)$ for open sets $\Omega \subset \bbR^n$ (frequently, $\Omega$ is bounded with $0 \in \Omega$), $p \in [1,\infty)$, appropriate remainder terms (the latter often associated with logarithmic refinements or with boundary terms), higher-order Hardy--Rellich inequalities, and the inclusion of magnetic fields and weights. Once more, it is impossible to achieve any reasonable level of completeness of the list of references cited in this paper. Hence we again restrict ourselves to Rellich and higher-order Hardy inequality references only and thus refer, for instance, to \cite{AGS06}, \cite{AS09}, \cite{Al76}, \cite{Av16}, \cite{Av16a}, \cite{Av19}, \cite[Ch.~6]{BEL15}, \cite{Ba06}, \cite{Ba07}, \cite{BT06}, \cite{Be09}, \cite{BCG10}, \cite{Ca21}, \cite{DH98}, \cite{DHA04}, \cite{DHA04a}, \cite{DHA12}, \cite{DFP14}, \cite{DLL22}, \cite{DLP21}, \cite{EE16}, \cite{Ev09}, \cite{EL05}, \cite{EL07}, \cite{Ga06}, \cite{GGM03}, \cite{GM11}, \cite[Part~2]{GM13},\cite{Ka84}, \cite{La18}, \cite{MNSS21}, \cite{MSS15}, \cite{MSS16}, \cite{Mo12}, \cite{Mu14}, \cite{Na22}, \cite{NN20}, \cite{Ow99}, \cite{Pa91}, \cite[Sect.~3.1]{RS19}, \cite{Sa22}, \cite{TZ07}, \cite{Xi14}, \cite{XY09}, and the extensive literature cited therein. 

More specifically, we mention that Caldiroli and Musina \cite{CM12} proved in 2012 that the constant $C_{n,\gamma}$ in \eqref{1.1} is optimal. (For various restricted ranges of $\gamma$ see also Adimurthi, Grossi, and Santra \cite{AGS06}, Ghoussoub and Moradifam \cite{GM11}, \cite[Sects.~6.3, 6.5, Ch.~7]{GM13}, and Tertikas and Zographopoulos \cite{TZ07}.) The special unweighted case $\gamma =0$ was settled for $n \geq 5$ by Herbst \cite{He77} in 1977 and subsequently by Yafaev \cite{Ya99} in 1999 for $n \geq 3$, $n \neq 4$ (both authors consider much more general fractional inequalities).

Under various restrictions on $\gamma$, Tertikas and Zographopoulos \cite{TZ07} obtained in 2007 optimality of $A_{n,\gamma}$ for $n \geq 5$ and $\bbR^n$ replaced by appropriate open bounded domains $\Omega$ with $0 \in \Omega$. This is revisited in Ghoussoub and Moradifam \cite{GM11}, \cite[Part~2]{GM13}. 
Similarly, Tertikas and Zographopoulos \cite{TZ07} obtained optimality of $A_{n,0}$ for $n \geq 5$; Beckner \cite{Be08a} (see also \cite{Be08}), and subsequently, Ghoussoub and Moradifam \cite{GM11}, \cite[Sects.~6.3, 6.5, Ch.~7]{GM13} and Cazacu \cite{Ca20}, obtained optimality of $A_{n,0}$ for $n \geq 3$. 

Before briefly describing the content of each section, we next describe the essence of the factorization method with the help of the simplest possible illustration in the power weighted Hardy inequality context (for more details see Remark \ref{r2.9}): Given $\alpha,\g \in \bbR$, $n \in \bbN$, $n \geq 2$, one introduces the two-parameter family of homogeneous vector-valued differential expressions 
\begin{equation}
T_{\alpha,\g} := |x|^{\g/2}\big[\nabla + \alpha |x|^{-2} x\big], \quad  x \in \bbR^n \backslash \{0\},
\end{equation}
with formal adjoint, denoted by $T_{\alpha,\g}^+$, 
\begin{equation}
T_{\alpha,\g}^+ = |x|^{\g/2}\big[- {\rm div(\, \cdot \,)} + \big(\alpha-2^{-1}\g\big) |x|^{-2} x \, \cdot\big], \quad  
\; x \in \bbR^n \backslash \{0\},    
\end{equation}
such that 
\begin{equation}
T_{\alpha,\g}^+ T_{\alpha,\g} = - |x|^\g \Delta - \g|x|^{\g-2}x\cdot\nabla + \alpha (\alpha + 2 - n-\g) |x|^{\g-2}. 
\end{equation}
Thus, for $f \in C_0^{\infty}(\bbR^n \backslash \{0\})$, 
\begin{align}
\begin{split}
0  \leq &\int_{\bbR^n} |(T_{\alpha,\g} f)(x)|^2 \, d^n x 
= \int_{\bbR^n} \ol{f(x)} (T_{\alpha,\g}^+ T_{\alpha,\g} f)(x) \, d^n x     \\
=& -\int_{\bbR^n} |x|^\g\ol{f(x)}(\Delta f)(x) \, d^n x-\g\int_{\bbR^n} |x|^{\g-2}\ol{f(x)}[x\cdot(\nabla f)(x)] \, d^n x\\
&+ \alpha (\alpha + 2 - n-\g)
\int_{\bbR^n} |x|^{\g-2} |f(x)|^2 \, d^n x.  \lb{1.12} 
\end{split} 
\end{align}
Standard integration by parts, substituting into \eqref{1.12}, rearranging terms, and maximizing with respect to $\alpha$ yields the sharp weighted Hardy inequality,
\begin{align}
\begin{split}
\int_{\bbR^n} |x|^\g|(\nabla f)(x)|^2 \, d^n x \geq [(n - 2+\g)/2]^2 
\int_{\bbR^n} |x|^{\g-2} |f(x)|^2 \, d^n x,& \\
f \in C_0^{\infty}(\bbR^n \backslash \{0\}), \; n \geq 2, \; \gamma \in\bbR \backslash \{2-n\}.&      \lb{1.14} 
\end{split}
\end{align}

We will employ the idea of factorizations in terms of homogeneous vector-valued differential expressions also in the more general case of Rellich-type inequalities, but with an additional twist: In order to arrive at sharp inequalities it will be necessary to involve an operator-valued coefficient involving the Laplace--Beltrami operator 
$-\Delta_{\bbS^{n-1}}$ in $L^2(\bbS^{n-1}; d^{n-1} \omega)$, see, for instance, \eqref{3.7}. While essentially all approaches to Rellich and Hardy--Rellich inequalities rely on decompositions into spherical harmonics, what permits us to go beyond existing approaches is our one-dimensional Lemma \ref{l3.13a} on power weighted Hardy and Rellich inequalities (a simplified version of \cite[Lemma~2.1]{GMP22}) that will be applied to the radial variable $r \in (0,\infty)$ in Appendix \ref{sA} with minimal assumptions regarding $\gamma \in \bbR$. 

In Section \ref{s2} we revisit factorizations without the additional spherical term containing $-\Delta_{\bbS^{n-1}}$. Following \cite{GL18}, we thus derive a first set of inequalities that contains power weighted Rellich, Hardy--Rellich, and Schmincke-type inequalities as special cases. Our novel factorizations including the additional spherical term are then the subject of Section \ref{s3} and once again we derive an improved set of inequalities that contains power weighted Rellich and Schmincke-type inequalities as special cases. A sharp power weighted Hardy--Rellich inequality is then derived in Appendix \ref{sA}.  

Of course, by restriction, the principal inequalities in this paper (such as \eqref{1.1}--\eqref{1.5b}, 
\eqref{1.8}--\eqref{1.10}) extend to the case where $f \in C_0^{\infty} (\bbR^n \backslash \{0\})$, $n \in \bbN$, $n \geq 2$, is replaced by $f \in C_0^{\infty} (\Omega \backslash \{0\})$, where $\Omega \subseteq \bbR^n$ is open with $0 \in \Omega$, without changing the constants in these inequalities. 

As a notational comment we remark that we abbreviate $\bbN_0 = \bbN \cup \{0\}$, and denote by ${\bbS^{n-1}}$ the  unit sphere in $\bbR^n$, and by $B_n(r_0;R)$ the open ball in $\bbR^n$ centered at $r_0 \in (0,\infty)$ of radius $R \in (0,\infty)$, $n \in \bbN$, $n \geq 2$. 

In the remainder of this paper it will repeatedly be convenient to sometimes restrict to real-valued functions and in this context we note the following elementary observation:

\begin{remark} \lb{r1.1}
For $f = f_1 + i f_2 \in C_0^2(\bbR^n)$, with $f_1 = \Re(f)$, $f_2 = \Im(f)$, one has the obvious equalities
\begin{align}
\begin{split}
& |f|^2 = |f_1|^2 + |f_2|^2,    \\
& |\nabla f|^2 =  |\nabla f_1|^2 +  |\nabla f_2|^2,    \lb{1.X} \\
& |\Delta f|^2 = |\Delta f_1|^2 + |\Delta f_2|^2,    
\end{split}
\end{align}
and thus the inequalities stated in Theorems \ref{t2.1}, \ref{t2.7}, \ref{t3.11}, \ref{t3.17a},  \ref{t3.19}, \ref{t3.22}, Corollaries \ref{c2.3}, \ref{c2.5}, \ref{c3.18}, and Lemma \ref{l3.21} hold for all 
$f \in C^{\infty}_0(\mathbb{R}^n \backslash \{0\})$ if and only if they hold for all real-valued 
$f \in C^{\infty}_0(\mathbb{R}^n \backslash \{0\})$. Thus, without loss of generality (w.l.o.g.), we will frequently assume that the functions $f \in C^{\infty}_0(\mathbb{R}^n \backslash \{0\})$ in the proofs of these results are real-valued. However, some of the proofs, for example those of Lemma \ref{l3.5} and Lemma \ref{l3.6}, only work for real-valued functions $f$ and this will then be clearly indicated. 
\hfill $\diamond$
\end{remark} 

\section{Factorizations and Hardy--Rellich-type Inequalities} \lb{s2}

Our main result for this section is the following theorem which is an extension of \cite[Thm. 2.1]{GL18}, in particular, inequalities (2.1) and (2.2) found therein. We will assume that 
$f\in C_0^\infty(\bbR^n\backslash\{0\})$ (infinitely differentiable continuous functions with compact support and support away from zero) throughout. 

Though many of the results proved in this section hold for $n = 1$, we will focus on the multidimensional case, that is, we will assume $n \geq 2$. For results in one dimension, we refer the reader to the references cited in \cite{GLMP22}. 

\begin{theorem} \lb{t2.1}
Let $\a,\b,\g\in\bbR$ and $f\in C_0^\infty(\bbR^n\backslash\{0\})$, $n\in\bbN$, $n\geq2$. Then,
\begin{align}\lb{2.1}
\int_{\bbR^n}|x|^\g |(\Delta f)(x)|^2\, d^nx&\geq [\a(\g+n-4)-2\b]\int_{\bbR^n}|x|^{\g-2}|(\nabla f)(x)|^2\, d^nx \no \\
&\quad-\a(\a-4+2\g)\int_{\bbR^n}|x|^{\g-4}|x\cdot(\nabla f)(x)|^2\, d^nx\\
&\quad+\b[(n-4)(\a-2)-\b+\g(n+\a+\g-6)]\int_{\bbR^n}|x|^{\g-4}|f(x)|^2 d^nx.\no 
\end{align}

In addition, if $\a(\a-4+2\g)\geq0$, then,
\begin{align}\lb{2.2}
\int_{\bbR^n}|x|^\g|(\Delta f)(x)|^2\, d^nx & \geq [\a(n-\a-\g)-2\b]\int_{\bbR^n}|x|^{\g-2}|(\nabla f)(x)|^2\, d^nx\\
&\quad+\b[(n-4)(\a-2)-\b+\g(n+\a+\g-6)]\int_{\bbR^n}|x|^{\g-4}|f(x)|^2 d^nx.\no 
\end{align}
\end{theorem}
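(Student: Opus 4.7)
The plan is to follow the scalar factorization strategy of \cite{GL18}: introduce the two-parameter family of differential expressions
\[
(T_{\a,\b,\g} f)(x) := |x|^{\g/2}\bigl[(\Delta f)(x) - \a |x|^{-2} x\cdot(\nabla f)(x) - \b|x|^{-2}f(x)\bigr], \qquad x\in\bbR^n\backslash\{0\},
\]
and exploit the trivial bound $0 \leq \int_{\bbR^n}|(T_{\a,\b,\g}f)(x)|^2\,d^nx$. By Remark \ref{r1.1} one may assume $f$ is real-valued throughout, so that squaring produces $|x|^\g|\Delta f|^2$ together with the two square integrands $\alpha^2|x|^{\g-4}|x\cdot\nabla f|^2$, $\b^2|x|^{\g-4}f^2$, and the three cross terms $-2\a\int|x|^{\g-2}(\Delta f)(x\cdot\nabla f)$, $-2\b\int|x|^{\g-2}f\Delta f$, and $2\a\b\int|x|^{\g-4}f(x\cdot\nabla f)$.

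The heart of the proof is to reduce the three cross terms, via integration by parts, to the four standard integrals appearing on the right-hand side of \eqref{2.1}. Using $\operatorname{div}(|x|^{\g-4}x) = (n+\g-4)|x|^{\g-4}$ and $2f(x\cdot\nabla f) = x\cdot\nabla(f^2)$, one gets at once
\[
\int_{\bbR^n}|x|^{\g-4}f(x\cdot\nabla f)\,d^nx = -\tfrac{1}{2}(n+\g-4)\int_{\bbR^n}|x|^{\g-4}f^2\,d^nx.
\]
A single integration by parts rewrites $\int|x|^{\g-2}f\Delta f$ as $-\int|x|^{\g-2}|\nabla f|^2 - (\g-2)\int|x|^{\g-4}f(x\cdot\nabla f)$, which by the preceding identity collapses into a clean combination of $\int|x|^{\g-2}|\nabla f|^2$ and $\int|x|^{\g-4}f^2$. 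The delicate (Pohozaev-type) cross term requires two successive integrations by parts, together with $\nabla f\cdot\nabla(x\cdot\nabla f) = |\nabla f|^2 + \tfrac{1}{2}x\cdot\nabla|\nabla f|^2$ and $\operatorname{div}(|x|^{\g-2}x) = (n+\g-2)|x|^{\g-2}$, and yields
\[
\int_{\bbR^n}|x|^{\g-2}(\Delta f)(x\cdot\nabla f)\,d^nx = -(\g-2)\int_{\bbR^n}|x|^{\g-4}|x\cdot\nabla f|^2\,d^nx + \tfrac{n+\g-4}{2}\int_{\bbR^n}|x|^{\g-2}|\nabla f|^2\,d^nx.
\]
Substituting these three identities back into $0 \leq \int|T_{\a,\b,\g}f|^2$ and rearranging produces \eqref{2.1}; what remains is purely algebraic, namely verifying that the $f^2$ coefficient $\b[(n+\g-4)(\a+\g-2)-\b]$ coincides with the factored form $\b[(n-4)(\a-2)-\b+\g(n+\a+\g-6)]$.

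Inequality \eqref{2.2} is then a direct consequence of \eqref{2.1}: the pointwise Cauchy--Schwarz bound $|x\cdot\nabla f|^2 \leq |x|^2|\nabla f|^2$, combined with the sign hypothesis $\a(\a-4+2\g)\geq 0$ (which makes the coefficient $-\a(\a-4+2\g)$ nonpositive), permits replacing $-\a(\a-4+2\g)\int|x|^{\g-4}|x\cdot\nabla f|^2$ by $-\a(\a-4+2\g)\int|x|^{\g-2}|\nabla f|^2$ without weakening the bound, and the new $|\nabla f|^2$ coefficient becomes $\a(\g+n-4)-2\b-\a(\a-4+2\g) = \a(n-\a-\g)-2\b$, exactly as required.

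The main obstacle I anticipate is the Pohozaev-style double integration by parts for the $(\Delta f)(x\cdot\nabla f)$ cross term, followed by the bookkeeping needed to recognize the $f^2$ coefficient in its factored form; once those are in hand the rest of the argument is mechanical.
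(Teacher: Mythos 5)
Your proposal is correct, and it takes a slightly different route through the same factorization idea. The paper introduces the formal adjoint $T^+_{\a,\b,\g}$, computes the (rather long) composition $T^+_{\a,\b,\g}T_{\a,\b,\g}$ in \eqref{2.5}, which necessarily produces fourth-order quantities like $\Delta^2 f$ and $x\cdot\nabla(\Delta f)$, and then performs three integrations by parts in \eqref{2.6}--\eqref{2.9} to collapse those back to the four integrals appearing in \eqref{2.1}. You instead bypass the adjoint entirely by expanding $\int|T_{\a,\b,\g}f|^2$ directly as a quadratic form; the two pure-square terms and the Pohozaev cross term are exactly what the paper's approach produces after its integrations by parts, but you reach them with substantially less bookkeeping and never leave second order. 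What the paper's route buys is a formula for $T^+T$ that is reused verbatim in Section 3 when building $T_{\a,\b,\g,\tau}$; what your route buys is transparency, since the three identities you invoke -- $\int|x|^{\g-4}f(x\cdot\nabla f) = -\tfrac12(n+\g-4)\int|x|^{\g-4}f^2$, the rewriting of $\int|x|^{\g-2}f\Delta f$, and the key Pohozaev identity $\int|x|^{\g-2}(\Delta f)(x\cdot\nabla f) = -(\g-2)\int|x|^{\g-4}|x\cdot\nabla f|^2 + \tfrac{n+\g-4}{2}\int|x|^{\g-2}|\nabla f|^2$ -- are each an elementary one- or two-step integration by parts, and I checked that the resulting $f^2$-coefficient $\b[(n+\g-4)(\a+\g-2)-\b]$ expands to the paper's factored form. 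Your derivation of \eqref{2.2} from \eqref{2.1} via pointwise Cauchy--Schwarz is identical to the paper's. One small presentational note: you describe $T_{\a,\b,\g}$ as a ``two-parameter family,'' but it carries three parameters $\a,\b,\g$ (the paper calls it three-parameter).
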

\begin{proof}
Given $\a,\b,\g\in\bbR$ and $n\in\bbN$, $n\geq2$, we introduce the three-parameter $n$-dimensional differential expression
\begin{align}
T_{\a,\b,\g}=|x|^{\g/2}\big[-\Delta+\a |x|^{-2}x\cdot\nabla+\b|x|^{-2}\big],\quad x\in\bbR^n\backslash\{0\},
\end{align}
and its formal adjoint, denoted by $T^+_{\a,\b,\g}$,
\begin{align}
\begin{split}
T^+_{\a,\b,\g}=|x|^{\g/2}\big\{&-\Delta-(\a+\g) |x|^{-2}x\cdot\nabla\\
&+\big[\b-\a\big(n-2\big)-2^{-1}\big(2^{-1}\g+\a+n-2\big)\g\big]|x|^{-2}\big\},\quad x\in\bbR^n\backslash\{0\}.
\end{split}
\end{align}
Assuming $f\in C_0^\infty(\bbR^n\backslash\{0\})$ throughout the proof one computes for $T^+_{\a,\b,\g}T_{\a,\b,\g}$,
\begin{align}
\big(T^+_{\a,\b,\g}T_{\a,\b,\g}f\big)(x)&=|x|^\g\big(\Delta^2 f\big)(x)+\a(4-\a-2\g)|x|^{\g-4}\sum_{j,k=1}^n x_j x_k f_{x_j,x_k}(x)\no \\
&\quad+[\a(\g+n-4)-2\b+\g(\g+n-2)]|x|^{\g-2}(\Delta f)(x) \no \\
&\quad+\big[-(n-3)\a^2+2(n-2)\a+4\b\no \\
&\quad\quad+(4\a-\a\g-\a^2-n\a-2\b)\g\big]|x|^{\g-4}[x\cdot(\nabla f)(x)] \no \\
&\quad+\big[\b^2+(n-4)(2\b-\a\b)-(n+\a+\g-6)\b\g\big]|x|^{\g-4}f(x)\no \\
&\quad+2\g|x|^{\g-2}x\cdot(\nabla(\Delta f))(x).\lb{2.5}
\end{align}
One notes that for $\g=0$ the last term of \eqref{2.5} vanishes.

Choosing (w.l.o.g., see Remark \ref{r1.1}) $f\in C_0^\infty(\bbR^n\backslash\{0\})$ to be real-valued and integrating by parts (observing the support properties of $f$, resulting in vanishing surface terms) implies
\begin{align}
0&\leq \int_{\bbR^n}[(T_{\a,\b,\g}f)(x)]^2\, d^nx=\int_{\bbR^n}f(x)\big(T^+_{\a,\b,\g}T_{\a,\b,\g}f\big)(x)\, d^nx\no \\
&=\int_{\bbR^n}|x|^\g f(x)\big(\Delta^2 f\big)(x)\, d^nx+\a(4-\a-2\g)\int_{\bbR^n}|x|^{\g-4}f(x)\sum_{j,k=1}^n x_j x_k f_{x_j,x_k}(x)\, d^nx\no \\
&\quad+[\a(\g+n-4)-2\b+\g(\g+n-2)]\int_{\bbR^n}|x|^{\g-2}f(x)(\Delta f)(x)\, d^nx \no \\
&\quad+\big[-(n-3)\a^2+2(n-2)\a+4\b\no \\
&\quad\quad+(4\a-\a\g-\a^2-n\a-2\b)\g\big]\int_{\bbR^n}|x|^{\g-4}f(x)[x\cdot(\nabla f)(x)]\, d^nx \no \\
&\quad+\big[\b^2+(n-4)(2\b-\a\b)-(n+\a+\g-6)\b\g\big]\int_{\bbR^n}|x|^{\g-4}|f(x)|^2\, d^nx\no \\
&\quad+2\g\int_{\bbR^n}|x|^{\g-2}f(x)[x\cdot(\nabla(\Delta f))(x)]\, d^nx.\lb{2.6}
\end{align}

We now make a few observations in order to simplify \eqref{2.6}. First, standard integration by parts yields
\begin{align}
\begin{split}
\int_{\bbR^n}|x|^\g f(x)\big(\Delta^2 f\big)(x)\, d^nx&=\int_{\bbR^n}|x|^\g |(\Delta f)(x)|^2\, d^nx\\
&\quad-\g(\g+n-2)\int_{\bbR^n}|x|^{\g-2} f(x)(\Delta f)(x)\, d^nx\\
&\quad-2\g\int_{\bbR^n}|x|^{\g-2} f(x)[x\cdot(\nabla(\Delta f))(x)]\, d^nx,
\end{split}
\end{align}
and
\begin{align}
\begin{split}
\int_{\bbR^n}|x|^{\g-2}f(x)(\Delta f)(x)\, d^nx&=(2-\g)\int_{\bbR^n}|x|^{\g-4}f(x)[x\cdot(\nabla f)(x)]\, d^nx\\
&\quad-\int_{\bbR^n}|x|^{\g-2}|(\nabla f)(x)|^2\, d^nx.
\end{split}
\end{align}
Similarly, one obtains
\begin{align}
\begin{split}\lb{2.9}
\sum_{j,k=1}^n \int_{\bbR^n}|x|^{\g-4}f(x)x_jx_kf_{x_j,x_k}(x)\, d^nx&=(3-n-\g)\int_{\bbR^n}|x|^{\g-4}f(x)[x\cdot(\nabla f)(x)]\, d^nx\\
&\quad-\int_{\bbR^n}|x|^{\g-4}|x\cdot(\nabla f)(x)|^2\, d^nx.
\end{split}
\end{align}
Combining \eqref{2.6}-\eqref{2.9} and simplifying yields \eqref{2.1}.

To arrive at inequality \eqref{2.2}, note by Cauchy's inequality,
\begin{align}
-\int_{\bbR^n}|x|^{\g-4} |x\cdot(\nabla f)(x)|^2\, d^nx\geq-\int_{\bbR^n}|x|^{\g-2}|(\nabla f)(x)|^2\, d^nx,
\end{align}
one concludes as long as $\a(\a+2\g-4)\geq0$ the inequality \eqref{2.1} can be further estimated from below.
\end{proof}

\begin{remark} \lb{r2.2}
When $\g=0$, the inequalities \eqref{2.1} and \eqref{2.2} coincide with those found in \cite[Eq. (2.1), (2.2)]{GL18}, and hence contain all the special cases included therein as well.\hfill$\diamond$
\end{remark}

As a special case of \eqref{2.2} one obtains a power weighted Rellich inequality as follows:

\begin{corollary} \lb{c2.3}
Let $\g\in\bbR$ and $f \in C^{\infty}_0(\bbR^n \backslash \{0\})$, $n\in\bbN$, $n\geq2$. If $n\geq\g\geq2$, or if $n\geq 4-\g$ with $\g\leq2$, then,
\begin{align}
\begin{split}\lb{2.11}
\int_{\bbR^n} |x|^\g|(\Delta f)(x)|^2 \, d^n x 
& \geq [(n-\g) (\g+n - 4)/4]^2 \int_{\bbR^n} |x|^{\g-4} |f(x)|^2 \, d^n x\\
&= \bigg[\frac{(n-2)^2}{4}-\frac{(\g-2)^2}{4}\bigg]^2\int_{\bbR^n} |x|^{\g-4} |f(x)|^2 \, d^n x
\end{split}
\end{align} 
\end{corollary}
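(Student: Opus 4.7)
The plan is to derive \eqref{2.11} from inequality \eqref{2.2} of Theorem \ref{t2.1} followed by a single application of the sharp power weighted Hardy inequality \eqref{1.14}. No single choice of $(\alpha,\beta)$ in \eqref{2.2} produces the sharp Rellich constant $[(n-\g)(n+\g-4)/4]^2$ on its own; rather, one tunes $(\alpha,\beta)$ so that, apart from the $|f|^2$-term, only a nonnegative multiple of $\int|x|^{\g-2}|\nabla f|^2$ remains, and then uses Hardy to convert that remainder into a bound on $\int|x|^{\g-4}|f|^2$.

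Concretely, I would take $\alpha = 0$, which trivially meets the hypothesis $\alpha(\alpha + 2\g - 4) \geq 0$ of \eqref{2.2}, and $\beta = -K/4$ with $K := (n-\g)(n+\g-4)$. Using the easily verified polynomial identity
\[
(n-4)(\alpha-2) + \g(n+\alpha+\g-6) = (\alpha+\g-2)(n+\g-4),
\]
inequality \eqref{2.2} simplifies to
\[
\int_{\bbR^n}|x|^\g|(\Delta f)(x)|^2\, d^nx \geq \frac{K}{2}\int_{\bbR^n}|x|^{\g-2}|(\nabla f)(x)|^2\, d^nx + B\int_{\bbR^n}|x|^{\g-4}|f(x)|^2\, d^nx,
\]
where $B = -\frac{K}{4}\big[(\g-2)(n+\g-4)+\frac{K}{4}\big]$. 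Either of the two range hypotheses on $(n,\g)$ in the statement forces $n-\g\geq 0$ and $n+\g-4\geq 0$, so $K \geq 0$ and the coefficient of the $|\nabla f|^2$-integral is nonnegative.

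Next, I would invoke the sharp weighted Hardy inequality \eqref{1.14} with $\g$ replaced by $\g - 2$ (this is valid as long as $n+\g-4\neq 0$; in the degenerate case $n+\g-4 = 0$ the right-hand side of \eqref{2.11} vanishes and the inequality is trivial) to bound $\int|x|^{\g-2}|\nabla f|^2$ from below by $[(n+\g-4)/2]^2\int|x|^{\g-4}|f|^2$. Inserting this into the display above, the total coefficient of $\int|x|^{\g-4}|f|^2$ becomes $\frac{K}{8}(n+\g-4)^2 + B$, and a short algebraic simplification collapses this to $K^2/16$. This gives the first form of \eqref{2.11}; the second form follows from the factorization $(n-2)^2-(\g-2)^2 = (n-\g)(n+\g-4)$.

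The only mild obstacle is the algebraic verification that the particular choice $\beta = -K/4$ is tuned so that the Hardy contribution and the residual $|f|^2$-term combine to saturate precisely $K^2/16$. Conceptually, this reflects that the factorization approach of Theorem \ref{t2.1} alone is exactly one Hardy step short of the sharp Rellich constant, which is why combining the two ingredients is both necessary and sufficient.
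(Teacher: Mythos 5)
Your proof is correct, but it takes a different route from the paper's and your opening heuristic is inaccurate. You assert that no single choice of $(\alpha,\beta)$ in \eqref{2.2} produces the sharp Rellich constant directly, so that a separate Hardy step is unavoidable; in fact the paper achieves exactly that with a single tuned pair. The paper sets $\beta=\alpha(n-\alpha-\gamma)/2$ so that the coefficient of $\int|x|^{\gamma-2}|\nabla f|^2$ in \eqref{2.2} vanishes (rather than merely being nonnegative), reducing \eqref{2.2} to
\begin{equation*}
\int_{\bbR^n}|x|^\gamma|(\Delta f)(x)|^2\,d^nx\geq F_{\gamma,n}(\alpha)\int_{\bbR^n}|x|^{\gamma-4}|f(x)|^2\,d^nx,
\end{equation*}
and then maximizes $F_{\gamma,n}$ over $\alpha$; the maxima $\alpha_{\pm}=2-\gamma\pm\big[\big((\gamma-2)^2+(n-2)^2\big)/2\big]^{1/2}$ satisfy $\alpha_{\pm}(\alpha_{\pm}-4+2\gamma)=\big[(n-2)^2-(\gamma-2)^2\big]/2$, so the constraint in \eqref{2.2} becomes precisely your condition $K\geq0$, and $F_{\gamma,n}(\alpha_{\pm})=K^2/16$ on the nose. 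Thus one choice of $(\alpha,\beta)$ reaches the sharp constant without importing Hardy as an external ingredient. Your route---$\alpha=0$, $\beta=-K/4$ in \eqref{2.2}, followed by the sharp weighted Hardy inequality \eqref{1.14} with weight $\gamma-2$---is algebraically valid (I verified that $\tfrac{K}{8}(n+\gamma-4)^2+B$ does collapse to $K^2/16$), and it has the conceptual appeal of exhibiting the Rellich constant as a product of two Hardy-type constants; its price is that it passes through the factorization machinery twice, once implicitly inside \eqref{2.2} and once in \eqref{1.14}, where the paper needs only one pass. Both approaches land on the identical range constraint $(n-2)^2\geq(\gamma-2)^2$ and handle the degenerate case $n+\gamma-4=0$ equally trivially.
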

\begin{proof}
Choosing $\b=\a(n-\a-\g)/2$ in \eqref{2.2} results in
\begin{equation}
\int_{\bbR^n} |x|^\g|(\Delta f)(x)|^2 \, d^n x 
\geq F_{\g,n}(\alpha) \int_{\bbR^n} |x|^{\g-4} |f(x)|^2 \, d^n x,
\end{equation}
with
\begin{equation}
F_{\g,n}(\alpha) = \alpha (n - \alpha-\g) [(n - 4) (\alpha -2 ) -  (\alpha/2) (n - \alpha-\g)+\g(n+\a+\g-6)]/2. 
\end{equation}
Maximizing $F_{\g,n}(\alpha)$ with respect to $\alpha$ yields maxima at 
\begin{equation}
\a=\alpha_{\pm} = 2-\g \pm \big[\big((\g-2)^2+(n-2)^2\big)/2\big]^{1/2}.  
\end{equation}
Taking the constraint $\alpha(\a-4+2\g) \geq 0$ into account results in $(n-2)^2\geq(\g-2)^2$, or 
$n(n-4)\geq\g(\g-4)$, hence one concludes $n\geq\g$ is needed if $\g\geq2$ or $n\geq 4-\g$ if $\g\leq2$. The fact 
\begin{equation}
F_{\g,n}(\alpha_{\pm}) = [(n-\g) (\g+n - 4) /4]^2=16^{-1}\big[(n-2)^2-(\g-2)^2\big]^2,   
\end{equation}
then yields the weighted Rellich inequality \eqref{2.11}.
\end{proof}

\begin{remark} \lb{r2.4}
$(i)$ By setting $\g=0$ in Corollary \ref{c2.3}, one recovers Rellich's classical inequality for $n\geq5$.  \\[1mm]
$(ii)$ By setting $\g=-\a$, the condition $n\geq 4-\g$ for some $\g\leq2$ recovers Corollary 6.2.2 of \cite[p. 214]{BEL15} (see also \cite[Thm. 12]{DH98}, \cite[Thm. 3.3]{Mi00}, \cite{TZ07}).

The case $n\geq\g\geq2$ is a special case of \cite[Theorem 3.1]{CM12}, which is also the content of our Theorem \ref{t3.11}. In particular, by assuming that $4^{-1}(n-2)^2-4^{-1}(\g-2)^2 \geq 0$, one infers that the minimum in \eqref{3.45} occurs for $j=0$ so that the coefficient on the right-hand side of \eqref{3.44} coincides with that on the right-hand side of \eqref{2.11}.

See Remark \ref{r3.12} concerning the optimality of the constant in \eqref{2.11}
\hfill $\diamond$
\end{remark}

Inequalities \eqref{2.1} and \eqref{2.2} also imply the following power weighted Hardy--Rellich-type result:

\begin{corollary}\lb{c2.5}
Let $\g\in\bbR$ and $f \in C^{\infty}_0(\bbR^n \backslash \{0\})$, $n\in\bbN$, $n\geq2$. If $n\geq\g\geq2$, or if $n\geq 8-3\g$ with $\g\leq 2$, then,
\begin{align}
\int_{\bbR^n} |x|^\g|(\Delta f)(x)|^2 \, d^n x 
& \geq [(n-\g)/2]^2 \int_{\bbR^n} |x|^{\g-2} |(\nabla f)(x)|^2 \, d^n x.     \lb{2.16}
\end{align} 
In addition,
\begin{align}
\int_{\bbR^n} |x|^\g|(\Delta f)(x)|^2 \, d^n x 
& \geq 4(n-4-\g) \int_{\bbR^n} |x|^{\g-2} |(\nabla f)(x)|^2 \, d^n x,\quad n\geq4+\g,\ \g\geq0,    \lb{2.17}
\end{align}
and
\begin{align}
\int_{\bbR^n} |x|^\g|(\Delta f)(x)|^2 \, d^n x 
& \geq (4-2\g)(\g+n-4) \int_{\bbR^n} |x|^{\g-2} |(\nabla f)(x)|^2 \, d^n x,\quad n\geq 4-\g,\ \g\leq2.    \lb{2.18}
\end{align}
Furthermore, if $n\geq\g\geq2$, or if $n\geq 4-\g$ with $\g\leq2$, then,
\begin{align}
\int_{\bbR^n} |x|^\g|(\Delta f)(x)|^2 \, d^n x 
& \geq [(n-\g)/2]^2 \int_{\bbR^n} |x|^{\g-4} |x \cdot (\nabla f)(x)|^2 \, d^n x.    \lb{2.19}
\end{align} 
\end{corollary}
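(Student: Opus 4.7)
The plan is to derive all four inequalities from Theorem \ref{t2.1} by judicious choices of the parameters $\alpha$ and $\beta$, combined with the pointwise Cauchy--Schwarz bound $|x\cdot(\nabla f)(x)|^2\leq |x|^2|(\nabla f)(x)|^2$. Throughout I would take $\beta=0$ so that the $|f|^2$ term in both \eqref{2.1} and \eqref{2.2} vanishes, reducing each case to optimizing a one-variable expression in $\alpha$ subject to the sign constraint $\alpha(\alpha-4+2\g)\geq 0$ that governs the passage from \eqref{2.1} to \eqref{2.2}.

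For inequalities \eqref{2.16}, \eqref{2.17}, and \eqref{2.18}, the starting point is \eqref{2.2} with $\beta=0$, namely
\begin{equation*}
\int_{\bbR^n}|x|^\g|(\Delta f)(x)|^2\, d^n x \geq \alpha(n-\alpha-\g)\int_{\bbR^n}|x|^{\g-2}|(\nabla f)(x)|^2\, d^n x,
\end{equation*}
valid whenever $\alpha(\alpha-4+2\g)\geq 0$. To obtain \eqref{2.16}, I would maximize $\alpha(n-\alpha-\g)$ over $\alpha$; the critical point $\alpha=(n-\g)/2$ yields the advertised coefficient $[(n-\g)/2]^2$, while the constraint reduces to $(n-\g)(n+3\g-8)\geq 0$, which matches both hypothesis cases (for $\g\geq 2$ the factor $n+3\g-8\geq n-2\geq 0$ is automatic, while for $\g\leq 2$ the assumption $n\geq 8-3\g$ forces $n\geq \g$ as well since $8-3\g\geq\g$). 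For \eqref{2.17}, the choice $\alpha=4$ reduces the constraint to $8\g\geq 0$ and produces the coefficient $4(n-4-\g)$, non-negative under $n\geq 4+\g$. For \eqref{2.18}, setting $\alpha=4-2\g$ makes $\alpha(\alpha-4+2\g)$ vanish identically and yields the coefficient $(4-2\g)(n+\g-4)$, non-negative precisely when $\g\leq 2$ and $n\geq 4-\g$.

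For \eqref{2.19}, I would instead begin with \eqref{2.1} at $\beta=0$, which reads
\begin{align*}
\int_{\bbR^n}|x|^\g|(\Delta f)(x)|^2\, d^n x &\geq \alpha(\g+n-4)\int_{\bbR^n}|x|^{\g-2}|(\nabla f)(x)|^2\, d^n x \\
&\quad - \alpha(\alpha-4+2\g)\int_{\bbR^n}|x|^{\g-4}|x\cdot(\nabla f)(x)|^2\, d^n x.
\end{align*}
Using the pointwise inequality $|x|^{\g-4}|x\cdot(\nabla f)(x)|^2\leq |x|^{\g-2}|(\nabla f)(x)|^2$ to estimate the first integral from below by the second (which requires the leading coefficient $\alpha(\g+n-4)$ to be non-negative in order to preserve the inequality direction), both terms collapse, producing $\alpha(n-\alpha-\g)\int_{\bbR^n}|x|^{\g-4}|x\cdot(\nabla f)(x)|^2\, d^n x$ on the right-hand side. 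Maximizing again at $\alpha=(n-\g)/2$ delivers the coefficient $[(n-\g)/2]^2$, and the sign constraint $(n-\g)(n+\g-4)\geq 0$ is precisely the stated dichotomy $n\geq\g\geq 2$ or $n\geq 4-\g$ with $\g\leq 2$.

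The main obstacle is purely organizational bookkeeping: identifying the particular values of $\alpha$ that realize the advertised coefficients and verifying that the sign constraints arising at each step align with the case hypotheses stated in the corollary. All calculations are routine polynomial manipulations, with no new analytic ingredient beyond what is already contained in Theorem \ref{t2.1} and the elementary Cauchy bound on $|x\cdot(\nabla f)(x)|$.
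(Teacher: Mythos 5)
Your proposal is correct, and for \eqref{2.16}, \eqref{2.17}, and \eqref{2.18} it matches the paper's own proof essentially verbatim (the paper also discusses the range $\gamma\leq 0$ for \eqref{2.17} and notes that one could replace $4(n-4-\g)$ by $4(n-4-|\g|)$, but that is a bonus remark beyond the stated claim, which you cover correctly).

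For \eqref{2.19}, however, you found a genuinely different and cleaner derivation. The coefficient of the $|f|^2$ term in \eqref{2.1} is $\beta\bigl[(n-4)(\a-2)-\b+\g(n+\a+\g-6)\bigr]$, a quadratic in $\beta$ with roots $\beta=0$ and $\beta=(n-4)(\a-2)+\g(n+\a+\g-6)$. The paper uses the second root, which leaves $(n-4)(4-\a)-\g(\a+2\g+2n-12)$ as the $|\nabla f|^2$ coefficient; to verify this is nonnegative (so Cauchy may be applied) they must analyze the discriminant of a quadratic in $\g$ and show it factors as $(\a-2(n-2))^2$, then maximize the resulting downward parabola $H_{\g,n}(\alpha)$ at $\alpha_2=(8-n-3\g)/2$ subject to $\alpha\in[4-n-\g,\,4-2\g]$. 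Your choice $\beta=0$ makes the $|\nabla f|^2$ coefficient simply $\alpha(\g+n-4)$, whose sign is transparent, and the post-Cauchy coefficient is $\alpha(n-\alpha-\g)$, the very same parabola already encountered in the proof of \eqref{2.16}, maximized at $\alpha_1=(n-\g)/2$. Both routes land on the constant $[(n-\g)/2]^2$ and the same admissible region $n\geq\g$, $n\geq 4-\g$; yours bypasses the discriminant computation altogether. The one subtlety worth underscoring is that for \eqref{2.19} one must indeed start from \eqref{2.1} rather than \eqref{2.2}: passing first through \eqref{2.2} (which already traded $|x\cdot(\nabla f)|^2$ for $|\nabla f|^2$) and then applying Cauchy again would impose the extra constraint $\alpha(\alpha-4+2\g)\geq 0$, shrinking the range of $\gamma$ for $\g\leq 2$ from $n\geq 4-\g$ to $n\geq 8-3\g$. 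You correctly avoid this by working with \eqref{2.1} directly.
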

\begin{proof}
Again, we chose $f \in C^{\infty}_0(\bbR^n \backslash \{0\})$ real-valued for simplicity throughout this proof. Choosing $\beta = 0$ in 
\eqref{2.2} yields
\begin{equation}
\int_{\bbR^n} |x|^\g|(\Delta f)(x)|^2 \, d^n x 
\geq \alpha (n - \alpha-\g) \int_{\bbR^n} |x|^{\g-2} |(\nabla f)(x)|^2 \, d^n x.
\end{equation} 
Maximizing $G_{\g,n}(\alpha) = \alpha (n - \alpha-\g)$ with respect to $\alpha$ yields a 
maximum at $\alpha_1 = (n-\g)/2$. Once again, considering the constraint $\alpha(\a-4+2\g) \geq 0$, equivalently, $\alpha \geq 0$ if $\g\geq2$ or $\a\geq4-2\g$ if $\g\leq2$, and noting $G_{\g,n}(\a_1)=[(n-\g)/2]^2$ proves \eqref{2.16}. 

Choosing $\alpha = 4$, $\beta = 0$ for $\g\geq0$ in \eqref{2.2} yields \eqref{2.17} with coefficient $4(n-4-\g)$, and requiring $4(n-4-\g) \geq 0$, in order to get a nontrivial inequality, yields $n\geq4+\g$. In addition, choosing $\alpha = 4$, $\beta = 0$ for $\g\leq0$ in \eqref{2.1} (thus allowing the omission of the second integral in \eqref{2.1}) yields \eqref{2.17} with coefficient $4(\g+n-4)$ for $n\geq 4-\g$. Combining both of these, we obtain \eqref{2.17} with the constant $4(n-4-|\g|)$, instead of $4(n-4-\g)$, on the right-hand side for $n\geq 4+|\g|$, $\g\in\bbR$. (See the proof of \eqref{2.18} below for omitting $\g<0$ in \eqref{2.17}.) 

We can also accomplish the slightly different inequality \eqref{2.18} by choosing $\a=4-2\g,$ $\b=0$ in \eqref{2.1} so that the coefficient is now $(4-2\g)(\g+n-4)$, thus we need $n\geq 4-\g$ if $\g\leq2$ and $n\leq 4-\g$ if $\g\geq2$ (the latter being thrown out) for this to be positive.

Note that inequality \eqref{2.18} is superior to \eqref{2.17} for all $\g\leq0$ and when $n\leq8-\g$ for $0\leq\g\leq2$. Furthermore, \eqref{2.18} has a smaller lower bound on the dimension, $n$, than \eqref{2.17} for all $0\leq\g\leq2$, and the inequalities are the same when $\g=0$, hence why \eqref{2.17} is written only for $\g\geq0$ in the statement of the corollary.

The choice $\b=(n-4)(\a-2)+\g(n+\a+\g-6)$  in \eqref{2.1} results in
\begin{align}
\no  \int_{\bbR^n} |x|^\g|(\Delta f)(x)|^2 \, d^n x 
& \geq [(n - 4)(4 - \alpha)-\g(\a+2\g+2n-12)] \int_{\bbR^n} |x|^{\g-2} |(\nabla f)(x)|^2 \, d^n x  \\
& \quad - \alpha (\alpha - 4+2\g) \int_{\bbR^n} |x|^{\g-4} |x \cdot (\nabla f)(x)|^2 \, d^n x.   
\lb{2.21}
\end{align}
Next, one studies when the coefficient of the first integral on the right-hand side of \eqref{2.21} is greater than or equal to zero (to apply Cauchy's inequality), which is equivalent to
\begin{align}\lb{2.22a}
-2\g^2-(\a+2n-12)\g+(n-4)(4-\a)\geq0.
\end{align}
The discriminant of the left-hand side of \eqref{2.22a} can be written $(\a-2(n-2))^2$, so that, assuming $\a\leq2(n-2)$, one sees that \eqref{2.22a} holds when $4-2\g\geq\a$ and $n\geq 4-\g$. Moreover, the requirement $n\geq 4-\g$ is equivalent to $2(n-2)\geq 4-2\g$, hence $4-2\g\geq\a$ implies the assumption $\a\leq2(n-2)$ already. Therefore, for $n \geq 4-\g$ and $\alpha \leq (4-2\g)$, applying Cauchy's inequality to the 1st term on the right-hand side of \eqref{2.21} now yields
\begin{equation}
\int_{\bbR^n} |x|^\g|(\Delta f)(x)|^2 \, d^n x 
\geq H_{\g,n}(\alpha) \int_{\bbR^n} |x|^{\g-4} |x \cdot (\nabla f)(x)|^2 \, d^n x,    
\end{equation}
where $H_{\g,n}(\alpha) = -\a^2+(8-3\g-n)\a-2\g^2+12\g-2\g n+4n-16$. Requiring $H_{\g,n}(\alpha)\geq0$ and maximizing $H_{\g,n}$ subject to the constraints $(4 - n-\g) \leq \alpha \leq (4-2\g)$ yields a maximum at $\alpha_2 = (8 - n-3\g)/2$. These constraints applied to $\a_2$ further yield the requirement $n\geq\g$ in addition to the previous requirement $n\geq 4-\g$. These requirements can be equivalently written $n\geq\g$ with $\g\geq 2$ (as this implies $n\geq 4-\g)$ or $n\geq 4-\g$ with $\g\leq2$ (as this implies $n\geq\g$). Finally, noting
$H_{\g,n}(\a_2) = [(n-\g)/2]^2$ implies \eqref{2.19}.
\end{proof}

\begin{remark} \lb{r2.6}
A sequence of extensions of \eqref{2.16} on bounded domains containing $0$, valid for $n \geq 5$, was derived by Tertikas and Zographopoulos \cite[Theorem~1.7]{TZ07}. For $\g=0$, an extension 
of inequality \eqref{2.16} valid for $n = 4$ and for bounded open domains containing $0$ was proved by \cite[Theorem~2.1\,(b)]{AGS06}. Moreover, an alternative weighted inequality whose special cases imply inequality \eqref{2.16} (under different parameter restrictions) appeared in \cite{Co09}. 

While the constant $(n-\g)^2/4$ in \eqref{2.16} is known to be optimal (cf.\ \cite{Co09} or \cite{TZ07}), the constant $4(n-4-\g)$ in \eqref{2.17} is not, the sharp constant also being $(n-\g)^2/4$ (cf.\ \cite{Co09}). 
\hfill $\diamond$ 
\end{remark}

The following special case of inequality \eqref{2.2} is an extension of Schmincke's inequality \cite{Sc72} to the power-weighted case:

\begin{theorem} \lb{t2.7}
Let $\g\in\bbR$ and $f\in C_0^\infty(\bbR^n\backslash\{0\})$, $n\in\bbN$, $n\geq2$. Then,
\begin{align} 
\int_{\bbR^n} |x|^\g|(\Delta f)(x)|^2 \, d^n x 
& \geq - s \int_{\bbR^n} |x|^{\g-2} |(\nabla f)(x)|^2 \, d^n x  \no \\
& \quad + [(\g+n - 4)/4]^2 \big[(\g-n)^2+4s\big] \int_{\bbR^n} |x|^{\g-4} |f(x)|^2 \, d^n x,   \lb{2.24} \\
& \hspace*{3.1cm} s \in \big[-2^{-1}\big\{(n-2)^2-(\g-2)^2\big\}, \infty\big).    \no 
\end{align} 
\end{theorem}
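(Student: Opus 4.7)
The plan is to invoke inequality \eqref{2.2} of Theorem \ref{t2.1} with carefully chosen parameters $\a, \b$ depending on $s$. I would set $M^2 := [(n-2)^2 + (\g-2)^2]/2$, pick
\[
\a := 2 - \g + \sqrt{M^2 + s},
\]
and define $\b := [\a(n - \a - \g) + s]/2$ so that the coefficient of $\int_{\bbR^n} |x|^{\g-2}|(\nabla f)(x)|^2 \, d^n x$ on the right-hand side of \eqref{2.2} equals $-s$ by construction. The hypothesis $s \geq -2^{-1}\{(n-2)^2 - (\g-2)^2\}$ is equivalent to $M^2 + s \geq (2-\g)^2$, which in particular ensures that $\a$ is real. (The alternate sign choice $\a = 2-\g - \sqrt{M^2+s}$ would work equally well, as all quantities appearing below depend only on $(\a+\g-2)^2$.)

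Introducing the shift $\b_0 := \a + \g - 2 = \sqrt{M^2 + s}$, I would first verify the admissibility condition $\a(\a + 2\g - 4) \geq 0$ required for \eqref{2.2}. A direct expansion gives
\[
\a(\a + 2\g - 4) = \b_0^2 - (2-\g)^2 = M^2 + s - (2-\g)^2 = s + 2^{-1}\{(n-2)^2 - (\g-2)^2\},
\]
which is nonnegative precisely under the assumed range of $s$, thereby pinning down the lower endpoint stated in the theorem.

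Next I would identify the coefficient of $\int_{\bbR^n} |x|^{\g-4}|f(x)|^2 \, d^n x$. Using the elementary identity $(n-4)(\a-2) + \g(n+\a+\g-6) = (\a + \g - 2)(n+\g-4) = \b_0 p$ with $p := n + \g - 4$, the bracket in \eqref{2.2} reduces to $\b_0 p - \b$, so the coefficient equals $\b(\b_0 p - \b)$. Using the further identity $(n-2)(\g-2) + M^2 = p^2/2$ (obtained by expanding $2M^2$), one finds $2\b = p\b_0 - p^2/2$ and $2(\b_0 p - \b) = p\b_0 + p^2/2$, hence
\[
4\b(\b_0 p - \b) = p^2 \b_0^2 - p^4/4 = p^2[4(M^2 + s) - p^2]/4.
\]
Finally, expanding $p^2 = (n-2)^2 + 2(n-2)(\g-2) + (\g-2)^2$ yields $4M^2 - p^2 = (n-\g)^2$, so that $\b(\b_0 p - \b) = [(\g+n-4)/4]^2[(\g-n)^2 + 4s]$, which is exactly the constant on the right of \eqref{2.24}. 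Substituting this and the gradient coefficient $-s$ into \eqref{2.2} delivers \eqref{2.24}.

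The main obstacle is simply keeping the algebra organized; the key insight is to parametrize by $\b_0 = \a + \g - 2$, which diagonalizes the critical relations and, strikingly, makes the admissibility constraint $\a(\a+2\g-4) \geq 0$ of Theorem \ref{t2.1} coincide with the constraint on $s$ in the hypothesis of Theorem \ref{t2.7}. No additional analytic input (Cauchy--Schwarz, Hardy-type inequalities, etc.) is required beyond what is already folded into \eqref{2.2}.
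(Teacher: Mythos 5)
Your proposal is correct and follows essentially the same route as the paper: both specialize inequality \eqref{2.2} of Theorem \ref{t2.1} to the choice $\b = 2^{-1}(\g+n-4)\big[\a+\g-2 - 2^{-1}(\g+n-4)\big]$ and identify the admissibility constraint $\a(\a-4+2\g) = (\a+\g-2)^2 - (2-\g)^2 \geq 0$ with the stated lower bound on $s$. The only difference is direction: the paper fixes $\b$ as that function of $\a$ and then reads off $s = s_{n,\g}(\a)$, whereas you solve $\a = 2-\g+\sqrt{M^2+s}$ in terms of $s$ and recover the same $\b$ via $\b = [\a(n-\a-\g)+s]/2$; the resulting parameter choice and algebra are identical.
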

\begin{proof}
The choice $\beta = 2^{-1}(\g+n-4)\big[\alpha+\g -2 - 2^{-1}(\g+n-4)\big]$, and the introduction of the new variable
\begin{equation}
s = s_{n,\g}(\alpha) = \alpha^2+2(\g-2)\a + 2^{-1}\big[(\g-2)^2-(n-2)^2\big],
\end{equation}
renders \eqref{2.2} into the  weighted Schmincke inequality \eqref{2.24}.
Here the requirement $\a(\a-4+2\g)=\a^2+2(\g-2)\a\geq0$, immediately yields the range requirement for $s$.
Note that the inequality is trivially satisfied for the choice $n=4-\g$ since, in this case, $s\geq0$ with the second coefficient equal to 0.
\end{proof}

\begin{remark} \lb{r2.8}
Inequality \eqref{2.24} 
is precisely the content of Corollary~6 in Bennett \cite{Be89}, in particular, \eqref{2.2} thus recovers Bennett's result which is itself an extension of Schmincke's original inequality. However, Bennett's result requires the additional restriction $\g\in(-\infty,2)\cup(n,\infty)$. Hence \eqref{2.24} is an extension of Bennett's result to all $\g\in\bbR$.

Moreover, assuming $-2^{-1}\big\{(n-2)^2-(\g-2)^2\big\} \leq 0$ (equivalently $n(n-4)\geq \g(\g-4)$) permits the value $s=0$ and hence implies Rellich's inequality \eqref{2.11} under the same assumptions as in Corollary \ref{c2.3}.

If $-2^{-1}\big\{(n-2)^2-(\g-2)^2\big\} \leq- (\g-n)^2/4$ (equivalently, $n\geq\g$ for some $\g\geq2$, or,  $n\geq8-3\g$ for some $\g\leq2$), the value 
$s= - (\g-n)^2/4$ is permitted, yielding inequality \eqref{2.16} under the same assumptions as in Corollary \ref{c2.5}.

Finally, if $\g\geq n$ with $\g\geq2$ or $8-3\g\geq n$ with $\g\leq2$, one has $(\g-n)^2+4s\geq0$ for all $s \in \big[-2^{-1}\big\{(n-2)^2-(\g-2)^2\big\}, \infty\big)$. Hence, one can choose $s=-2^{-1}\big\{(n-2)^2-(\g-2)^2\big\}$ in \eqref{2.24} to conclude
\begin{align} 
\begin{split}
\int_{\bbR^n} |x|^\g|(\Delta f)(x)|^2 \, d^n x 
\geq 2^{-1}\big\{(n-2)^2-(\g-2)^2\big\} \int_{\bbR^n} |x|^{\g-2} |(\nabla f)(x)|^2 \, d^n x,&  \lb{2.26} \\
\g\geq n, \; \g\geq2, \; \text{or,} \; 8-3\g\geq n,\; \g\leq2.& 
\end{split} 
\end{align} 
When comparing \eqref{2.26} to the previous results given in \eqref{2.17} and \eqref{2.18}, different choices of $\g$ and $n$ (being mindful of parameter requirements) yield that none of the inequalities are superior for all choices.
In particular, directly comparing the constants given in \eqref{2.17}, and \eqref{2.26} allows one to conclude that \eqref{2.26} will be strictly superior provided that each are valid and
\begin{equation}\lb{2.27}
(n-8)(n-4)>\g(\g-12).
\end{equation}
Similarly, \eqref{2.26} is strictly superior to \eqref{2.18} provided both are valid and
\begin{equation}\lb{2.28}
(\g+n-4)(3\g+n-8)\geq0.
\end{equation}
\hfill $\diamond$ 
\end{remark}

We conclude this section with a few additional remarks regarding our results.

\begin{remark} \lb{r2.9} 
The factorization approach was employed in the context of the classical Hardy inequality in \cite{GP80} (and some of its logarithmic refinements in \cite{Ge84}). We now illustrate how this approach applies to a weighted Hardy inequality. Given $\alpha,\g \in \bbR$, $n \in \bbN$, $n \geq 2$, one introduces the two-parameter family of homogeneous vector-valued differential expressions 
\begin{equation}
T_{\alpha,\g} := |x|^{\g/2}\big[\nabla + \alpha |x|^{-2} x\big], \quad  x \in \bbR^n \backslash \{0\},
\end{equation}
with formal adjoint, denoted by $T_{\alpha,\g}^+$, 
\begin{equation}
T_{\alpha,\g}^+ = |x|^{\g/2}\big[- {\rm div(\, \cdot \,)} + \big(\alpha-2^{-1}\g\big) |x|^{-2} x \, \cdot\big], \quad  
\; x \in \bbR^n \backslash \{0\},    
\end{equation}
such that 
\begin{equation}
T_{\alpha,\g}^+ T_{\alpha,\g} = - |x|^\g\Delta-\g|x|^{\g-2}x\cdot\nabla + \alpha (\alpha + 2 - n-\g) |x|^{\g-2}. 
\end{equation}
Thus, for $f \in C_0^{\infty}(\bbR^n \backslash \{0\})$, 
\begin{align}
\begin{split}
0  \leq &\int_{\bbR^n} |(T_{\alpha,\g} f)(x)|^2 \, d^n x 
= \int_{\bbR^n} \ol{f(x)} (T_{\alpha,\g}^+ T_{\alpha,\g} f)(x) \, d^n x     \\
=& -\int_{\bbR^n} |x|^\g\ol{f(x)}(\Delta f)(x) \, d^n x-\g\int_{\bbR^n} |x|^{\g-2}\ol{f(x)}[x\cdot(\nabla f)(x)] \, d^n x\\
&+ \alpha (\alpha + 2 - n-\g)
\int_{\bbR^n} |x|^{\g-2} |f(x)|^2 \, d^n x.  \lb{2.32} 
\end{split} 
\end{align}
By standard integration by parts,
\begin{align}
-\int_{\bbR^n} |x|^\g\ol{f(x)}(\Delta f)(x) \, d^n x=\int_{\bbR^n} |x|^\g|(\nabla f)(x)|^2 \, d^n x+\g\int_{\bbR^n} |x|^{\g-2}\ol{f(x)}[x\cdot(\nabla f)(x)] \, d^n x
\end{align}
and hence substituting into \eqref{2.32} and rearranging terms yields
\begin{equation}
\int_{\bbR^n} |x|^\g|(\nabla f)(x)|^2 \, d^n x \geq \alpha (n-2+\g-\a)
\int_{\bbR^n} |x|^{\g-2} |f(x)|^2 \, d^n x.   
\end{equation}
Maximizing $\alpha (n-2+\g-\a)$ with respect to $\alpha$ yields the weighted  Hardy inequality,
\begin{align}
\begin{split}
\int_{\bbR^n} |x|^\g|(\nabla f)(x)|^2 \, d^n x \geq [(n - 2+\g)/2]^2 
\int_{\bbR^n} |x|^{\g-2} |f(x)|^2 \, d^n x,&   \\
f \in C_0^{\infty}(\bbR^n \backslash \{0\}), \; n \geq 2, \; \gamma \in\bbR \backslash \{2-n\}.&  
\lb{2.35} 
\end{split}
\end{align}
Again, it is well-known that the constant in \eqref{2.35} is optimal (cf., e.g., \cite{Mu14b}).

One should note that applying \eqref{2.35} to the r.h.s. of \eqref{2.16} yields \eqref{2.11} but with more constraints on $n$ when $\g\leq2$. 
\hfill $\diamond$
\end{remark}

In fact, the factorization approach also yields a known improvement of the weighted Hardy inequality (see, e.g., \cite[Theorem~1.2.5]{BEL15}, specializing it to $p=2$, $\varepsilon =\g/2$). We now sketch the corresponding argument. 

\begin{remark} \lb{r2.10}
Given $\alpha,\g \in \bbR$, $n \in \bbN$, $n \geq 2$, one introduces the following modified two-parameter family of homogeneous vector-valued differential expressions
\begin{equation}
\wti T_{\alpha,\g} := |x|^{\g/2}\big[\big(|x|^{-1} x\big) \cdot \nabla + \alpha |x|^{-1}\big], 
\quad  x \in \bbR^n \backslash \{0\},  
\end{equation}
with formal adjoint, denoted by $\big(\wti T_{\alpha,\g}\big)^+$, 
\begin{equation}
\big(\wti T_{\alpha,\g}\big)^+= |x|^{\g/2}\big[- \big(|x|^{-1} x\big) \cdot \nabla  
+ \big(\alpha - n +1-2^{-1}\g\big) |x|^{-1}\big], \quad  \; x \in \bbR^n \backslash \{0\}.  
\end{equation}
Exploiting the identity (for $f \in C_0^{\infty}(\bbR^n \backslash \{0\})$, for simplicity) with $x \in \bbR^n \backslash \{0\}$, 
\begin{align}
& \big[|x|^{\g/2-1} x \cdot \nabla\big] \big[|x|^{\g/2-1} x \cdot (\nabla f)(x)\big] = 
|x|^{\g-2}2^{-1}\g[x\cdot(\nabla f)(x)]+|x|^{\g-2} \sum_{j,k=1}^n x_j x_k f_{x_j, x_k}(x), 
\end{align}
one computes 
\begin{align}
\begin{split}
\big(\wti T_{\alpha,\g}\big)^+ \wti T_{\alpha,\g} 
=& - |x|^{\g-2} \sum_{j,k=1}^n x_j x_k \partial_{x_j} \partial_{x_k} 
- (n-1+\g) |x|^{\g-2} [x \cdot (\nabla f)(x)]\\
&+ \alpha (\alpha + 2 - n-\g) |x|^{\g-2},   \quad x \in \bbR^n \backslash \{0\}.    
\end{split}
\end{align}
Thus, appropriate integration by parts yield 
\begin{align}
0  \leq& \int_{\bbR^n} \big|\big(\wti T_{\alpha,\g} f\big)(x)\big|^2 \, d^n x 
= \int_{\bbR^n} \ol{f(x)} \big(\big(\wti T_{\alpha,\g}\big)^+ \wti T_{\alpha,\g} f\big)(x) \, d^n x   \no \\
=& - \int_{\bbR^n} |x|^{\g-2} \sum_{j,k=1}^n x_j x_k \ol{f(x)} f_{x_j, x_k}(x)\, d^n x - (n-1+\g)\int_{\bbR^n} |x|^{\g-2} \ol{f(x)} [x \cdot (\nabla f)(x)]\, d^n x    \no \\
& + \alpha (\alpha - n + 2-\g) \int_{\bbR^n} |x|^{\g-2} |f(x)|^2 \, d^n x,   \no \\
=& \int_{\bbR^n} |x|^{\g-2} \bigg\{|x \cdot (\nabla f)(x)|^2 
+ \alpha (\alpha - n + 2-\g)|f(x)|^2\bigg\}\, d^n x, \quad f \in C_0^{\infty}(\bbR^n \backslash \{0\}).     
\end{align}
Here we used 
\begin{align}
\begin{split}
& \sum_{j,k=1}^n \int_{\bbR^n} |x|^{\g-2} x_j x_k \ol{f(x)} f_{x_j, x_k}(x) \, d^n x 
= - \int_{\bbR^n} |x|^{\g-2} |x \cdot (\nabla f)(x)|^2 \, d^n x    \\
& \quad - (n-1+\g) \int_{\bbR^n} |x|^{\g-2} \ol{f(x)} [x \cdot (\nabla f)(x)] \, d^n x, 
\quad f \in C_0^{\infty}(\bbR^n \backslash \{0\}).   
\end{split}
\end{align}
Thus,
\begin{equation}
\int_{\bbR^n} |x|^\g\big| |x|^{-1} x \cdot \nabla f(x)\big|^2 \, d^n x \geq \alpha (n-2+\g-\a)
\int_{\bbR^n} |x|^{\g-2} |f(x)|^2 \, d^n x.    
\end{equation}
Maximizing $\alpha (n-2+\g-\a)$ with respect to $\alpha$ yields the improved Hardy inequality,
\begin{align}
\begin{split}
\int_{\bbR^n} |x|^\g\big||x|^{-1} x \cdot \nabla f(x)\big|^2 \, d^n x \geq  [(n - 2+\g)/2]^2 
\int_{\bbR^n} |x|^{\g-2} |f(x)|^2 \, d^n x,&     \\
 f \in C_0^{\infty}(\bbR^n \backslash \{0\}), \; n \geq 2, \; \gamma \in\bbR \backslash \{2-n\}.&   \lb{2.43} 
\end{split}
\end{align}
(By Cauchy's inequality, \eqref{2.43} implies the weighted Hardy inequality \eqref{2.35}.)
Again, it is known that the constant in \eqref{2.43} is optimal (cf., e.g., \cite[Theorem~1.2.5]{BEL15}). 
\hfill $\diamond$
\end{remark}

\section{Factorization With an Additional Spherical Term}\lb{s3}

In this section, we extend Theorem \ref{t2.1} by introducing a spherical term into the computation by means of the Laplace--Beltrami operator. Before stating our main result, we introduce some notation:

Let $\mathbb{S}^{n-1}$ denote the $(n-1)$-dimensional unit sphere in $\bbR^n,\ n\in\bbN,\ n\geq 2,$ with $d^{n-1}\omega:=d^{n-1}\omega(\theta)$ the usual volume measure on $\mathbb{S}^{n-1}$. We denote by $-\Delta_{\mathbb{S}^{n-1}}$ the nonnegative, self-adjoint Laplace--Beltrami operator in $L^2(\mathbb{S}^{n-1};d^{n-1}\omega)$ and we recall that in terms of polar coordinates, 
\begin{align}
- \Delta = - r^{1-n} \f{\partial}{\partial r} r^{n-1} \f{\partial}{\partial r} - \f{1}{r^2} \Delta_{\bbS^{n-1}}  
= - \f{\partial^2}{\partial r^2} - \f{n-1}{r} \f{\partial}{\partial r} - \f{1}{r^2} \Delta_{\bbS^{n-1}}.
\end{align}
Our extension of Theorem \ref{t2.1} can then be stated as follows:

\begin{theorem} \lb{t3.1}
Let $\a,\b,\g,\tau\in\bbR$ and $f\in C_0^\infty(\bbR^n\backslash\{0\})$, $n\in\bbN$, $n\geq2$. Then,
\begin{align} 
& \int_{\bbR^n}|x|^\g |(\Delta f)(x)|^2\, d^nx\geq [\a(\g+n-4)-2\b]\int_{\bbR^n}|x|^{\g-2}|(\nabla f)(x)|^2\, d^nx \no \\
&\quad-\a(\a-4+2\g)\int_{\bbR^n}|x|^{\g-4}|x\cdot(\nabla f)(x)|^2\, d^nx   \no \\
&\quad+\b[(n-4)(\a-2)-\b+\g(n+\a+\g-6)]\int_{\bbR^n}|x|^{\g-4}|f(x)|^2 d^nx\no  \\
&\quad-\tau[(\g+n-4)(2-\a-\g)+2\b]  \lb{3.1} \\
&\quad\qquad \times \int_0^\infty\int_{{\bbS^{n-1}}}r^{\g-4}\big|\big((-\Delta_{\bbS^{n-1}})^{1/2}f\big)(r,\t)\big|^2\,  d^{n-1} \omega(\theta) \, r^{n-1}dr  \no  \\
&\quad -2\tau\int_0^\infty\int_{{\bbS^{n-1}}}r^{\g-2}\bigg|\bigg((-\Delta_{\bbS^{n-1}})^{1/2}\frac{\partial f}{\partial r}\bigg)(r,\t)\bigg|^2\,  d^{n-1} \omega(\theta) \, r^{n-1}dr   \no  \\
&\quad -\tau(\tau+2)\int_0^\infty\int_{{\bbS^{n-1}}}r^{\g-4}|(\Delta_{\bbS^{n-1}} f)(r,\t)|^2\, d^{n-1} \omega(\theta) \, r^{n-1}dr.  \no 
\end{align}

In addition, if $\a(\a-4+2\g)\geq0$, then,
\begin{align} 
& \int_{\bbR^n}|x|^\g|(\Delta f)(x)|^2\, d^nx \geq [\a(n-\a-\g)-2\b]\int_{\bbR^n}|x|^{\g-2}|(\nabla f)(x)|^2\, d^nx  \no \\
&\quad+\b[(n-4)(\a-2)-\b+\g(n+\a+\g-6)]\int_{\bbR^n}|x|^{\g-4}|f(x)|^2 d^nx   \no \\
&\quad-\tau[(\g+n-4)(2-\a-\g)+2\b]  \no \\
&\quad\qquad \times \int_0^\infty\int_{{\bbS^{n-1}}}r^{\g-4}\big|\big((-\Delta_{\bbS^{n-1}})^{1/2}f\big)(r,\t)\big|^2\,  d^{n-1} \omega(\theta) \, r^{n-1}dr  \no  \\
&\quad -2\tau\int_0^\infty\int_{{\bbS^{n-1}}}r^{\g-2}\bigg|\bigg((-\Delta_{\bbS^{n-1}})^{1/2}\frac{\partial f}{\partial r}\bigg)(r,\t)\bigg|^2\,  d^{n-1} \omega(\theta) \, r^{n-1}dr   \no  \\
&\quad -\tau(\tau+2)\int_0^\infty\int_{{\bbS^{n-1}}}r^{\g-4}|(\Delta_{\bbS^{n-1}} f)(r,\t)|^2\, d^{n-1} \omega(\theta) \, r^{n-1}dr.  \lb{3.2} 
\end{align}
\end{theorem}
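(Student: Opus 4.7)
The plan is to extend the factorization of Theorem \ref{t2.1} by adjoining a spherical term. Specifically, I introduce the four-parameter differential expression
\begin{equation*}
\wti T_{\a,\b,\g,\tau} := T_{\a,\b,\g} + \tau\, |x|^{\g/2-2}(-\Delta_{\bbS^{n-1}}), \quad x \in \bbR^n \backslash \{0\},
\end{equation*}
and exploit the trivial inequality $\int_{\bbR^n} |\wti T_{\a,\b,\g,\tau} f|^2\, d^nx \geq 0$. By Remark \ref{r1.1} it suffices to work with real-valued $f \in C_0^\infty(\bbR^n\backslash\{0\})$. Writing $S := -\Delta_{\bbS^{n-1}}$ and expanding the square yields
\begin{align*}
\int_{\bbR^n} \bigl(\wti T_{\a,\b,\g,\tau} f\bigr)^2 d^nx
& = \int_{\bbR^n} \bigl(T_{\a,\b,\g} f\bigr)^2 d^nx + 2\tau \int_{\bbR^n} |x|^{\g/2-2}\bigl(T_{\a,\b,\g} f\bigr)(Sf)\, d^nx \\
& \quad + \tau^2 \int_{\bbR^n}|x|^{\g-4}(Sf)^2\, d^nx.
\end{align*}
The first summand equals $\int_{\bbR^n}|x|^\g|\Delta f|^2\, d^nx$ minus the right-hand side of \eqref{2.1}, by the calculation already carried out in the proof of Theorem \ref{t2.1}. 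So once the remaining two summands are evaluated explicitly, rearranging the resulting identity for $\int |x|^\g |\Delta f|^2\, d^n x$ will yield \eqref{3.1}.

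The cross term is then decomposed by passing to polar coordinates $d^nx = r^{n-1} dr\, d^{n-1}\omega$ and using $-\Delta = -\partial_r^2 - (n-1)r^{-1}\partial_r + r^{-2}S$ together with $x\cdot\nabla = r\,\partial_r$. This produces five pieces (three from $-\Delta f$, one from $\a|x|^{-2}(x\cdot\nabla f)$, one from $\b|x|^{-2}f$). The angular piece $r^{-2}(Sf)^2$ from $-\Delta$ immediately gives $2\tau\int|x|^{\g-4}(Sf)^2\, d^nx$, which combines with the $\tau^2$ term above to produce the coefficient $-\tau(\tau+2)$ after moving to the other side of the inequality. The $-\partial_r^2 f$ piece is integrated by parts once in $r$, generating the desired term $2\tau\int|x|^{\g-2}|S^{1/2}\partial_r f|^2\, d^nx$ (via $\int_{\bbS^{n-1}}(\partial_r f)(S\partial_r f)\, d^{n-1}\omega = \int_{\bbS^{n-1}}|S^{1/2}\partial_r f|^2\, d^{n-1}\omega$) together with an auxiliary term proportional to $\int r^{\g+n-4}(\partial_r f)(Sf)\, dr\, d^{n-1}\omega$.

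The key trick is to use self-adjointness of $S$ on $L^2(\bbS^{n-1};d^{n-1}\omega)$ in the form
\begin{equation*}
\int_{\bbS^{n-1}}(\partial_r f)(Sf)\, d^{n-1}\omega = \tfrac{1}{2}\tfrac{\partial}{\partial r}\!\int_{\bbS^{n-1}}|(S^{1/2}f)(r,\t)|^2\, d^{n-1}\omega,
\end{equation*}
so that all remaining $(\partial_r f)(Sf)$ contributions — combining the $(\g+n-3)$ factor from the radial integration by parts on $-\partial_r^2 f$, the $-(n-1)$ factor from the $-(n-1)r^{-1}\partial_r$ piece, and the $\a$ factor from $\a|x|^{-2}(x\cdot\nabla f)$ — collapse to a single coefficient $(\a+\g-2)$ multiplying a total $r$-derivative. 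Integrating by parts once more in $r$ yields $-\tau(\a+\g-2)(\g+n-4)\int|x|^{\g-4}|S^{1/2}f|^2\, d^nx$; adding the $2\tau\b\int|x|^{\g-4}|S^{1/2}f|^2\, d^nx$ contribution from the $\b|x|^{-2}f$ piece and moving the sum to the other side of the inequality produces precisely the coefficient $-\tau[(\g+n-4)(2-\a-\g)+2\b]$ claimed in \eqref{3.1}. Inequality \eqref{3.2} then follows from \eqref{3.1} by applying Cauchy's inequality $|x\cdot\nabla f|^2\leq |x|^2|\nabla f|^2$ to the $|x|^{\g-4}|x\cdot\nabla f|^2$ integral whenever $\a(\a-4+2\g)\geq 0$, exactly as at the end of the proof of Theorem \ref{t2.1}. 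The principal obstacle is the extensive bookkeeping of the many radial and spherical integrations by parts; all boundary contributions vanish by the support properties of $f$, and no new analytic idea beyond those already present in Theorem \ref{t2.1} is needed.
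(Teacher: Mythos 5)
Your proposal is correct and takes essentially the same factorization route as the paper: both introduce $T_{\a,\b,\g,\tau} = T_{\a,\b,\g} + \tau|x|^{\g/2-2}(-\Delta_{\bbS^{n-1}})$, exploit $\int_{\bbR^n}|T_{\a,\b,\g,\tau}f|^2\,d^nx \geq 0$, expand in polar coordinates, and collect the cross and quadratic spherical terms by integration by parts in $r$. The paper organizes these integrations by parts as separate Lemmas \ref{l3.3}--\ref{l3.8} (in particular Lemma \ref{l3.3} establishes that $(-\Delta_{\bbS^{n-1}})^{1/2}$ commutes with $\partial/\partial r$), whereas you perform the same bookkeeping directly and sidestep Lemma \ref{l3.3} by working with the sesquilinear form $(\partial_r f, -\Delta_{\bbS^{n-1}}f)_{L^2(\bbS^{n-1})} = \tfrac12 \partial_r (f, -\Delta_{\bbS^{n-1}}f)_{L^2(\bbS^{n-1})}$, which only needs $\partial_r$ to commute with $-\Delta_{\bbS^{n-1}}$ itself rather than with its square root; this is a pleasant (if modest) streamlining of the paper's technical layer.
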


\begin{remark}\lb{r3.2}
Clearly, choosing $\tau=0$ in Theorem \ref{t3.1} recovers Theorem \ref{t2.1}. Furthermore, one notes that choosing $\tau\in(-2,0)$ and assuming $(\g+n-4)(2-\a-\g)+2\b>0$ in \eqref{3.1} and \eqref{3.2}, all three of the new $\tau$-dependent terms will have positive coefficients. This choice of $\tau\in(-2,0)$ will be evident in many of the applications of Theorem \ref{t3.1} in this section.
\hfill$\diamond$
\end{remark}

Before proving Theorem \ref{t3.1}, we recall some standard facts and prove a few needed general results.
For $j\in\bbN_0= \bbN \cup \{0\}$, let
\begin{equation}\lb{3.3}
\lambda_j=j(j+n-2),\quad j\in\bbN_0,
\end{equation}
be the eigenvalues of $-\Delta_{\bbS^{n-1}}$, that is, $\sigma (-\Delta_{\bbS^{n-1}}) = \{j(j+n-2)\}_{j \in \bbN_0}$, of multiplicity
\begin{equation}
m(\lambda_j)=(2j+n-2)(j+n-2)^{-1} {j+n-2 \choose n-2},\quad j\in\bbN_0,
\end{equation}
with corresponding eigenfunctions $\varphi_{j,\ell},\ j\in\bbN_0,\ \ell\in\{1,\dots,m(\lambda_j)\}$. We may (and will) assume that $\{\varphi_{j,\ell}\}_{j\in\bbN_0,\ \ell\in\{1,\dots,m(\lambda_j)\}}$ is an orthonormal basis of $L^2({\bbS^{n-1}};d^{n-1}\omega)$, and let
\begin{align}
\begin{split}
F_{f,j,\ell}(r)=(\varphi_{j,\ell},f(r,\dott))_{L^2({\bbS^{n-1}};d^{n-1}\omega)}=\int_{\bbS^{n-1}} \overline{\varphi_{j,\ell}(\theta)}  f(r,\theta) \, d^{n-1}\omega(\theta),&\\
f\in C_0^\infty(\bbR^n\backslash\{0\}),\quad r>0,\quad j\in\bbN_0,\ \ell\in\{1,\dots,m(\lambda_j)\}.&
\end{split}
\end{align}

Given $\a,\b,\g,\tau\in\bbR$ and $n\in\bbN,\ n\geq2$, we now introduce the four-parameter $n$-dimensional differential expression
\begin{equation}
T_{\a,\b,\g,\tau}=T_{\a,\b,\g}+\tau |x|^{\frac{\g}{2}-2}(-\Delta_{\bbS^{n-1}}),\quad x\in\bbR^n\backslash\{0\}, 
\lb{3.7}
\end{equation}
and its formal adjoint, denoted by $T_{\a,\b,\g,\tau}^+$,
\begin{equation}
T_{\a,\b,\g,\tau}^+=T_{\a,\b,\g}^+ + \tau|x|^{\frac{\g}{2}-2}(-\Delta_{\bbS^{n-1}}),\quad x\in\bbR^n\backslash\{0\}.
\end{equation}
In polar coordinates, one has for $r\in (0,\infty)$,
\begin{align}
T_{\a,\b,\g,\tau}=r^{\g/2}\bigg\{-r^{-(n-1)}\frac{\partial}{\partial r}\bigg(r^{n-1}\pr\bigg)+\a r^{-1}\pr+\b r^{-2}+(1+\tau)r^{-2}(-\Delta_{\bbS^{n-1}})\bigg\}, 
\end{align}
and
\begin{align}
T_{\a,\b,\g,\tau}^+=r^{\g/2}\bigg\{&-r^{-(n-1)}\frac{\partial}{\partial r}\bigg(r^{n-1}\pr\bigg)-(\a+\g) r^{-1}\pr\\
&+\big[\b-\a\big(n-2\big)-\big(2^{-1}\g+\a+n-2\big)2^{-1}\g\big]r^{-2}+(1+\tau)r^{-2}(-\Delta_{\bbS^{n-1}})\bigg\}.   \no 
\end{align}
In direct analogy to the proof of Theorem \ref{t2.1}, we now study the composition $T^+_{\a,\b,\g,\tau}T_{\a,\b,\g,\tau}$. We first prove a few technical lemmas that will be used to derive identities and estimates employed in the proof of Theorem \ref{t3.1}. The first result shows that the square root of the Laplace--Beltrami operator nearly commutes with the radial partial derivative.

\begin{lemma}\lb{l3.3}
Let $f\in C_0^\infty(\bbR^n\backslash\{0\})$, $n\in\bbN$, $n\geq2$. Then for all $r_0 \in (0,\infty)$ and $k\in\bbN$, one has 
\begin{equation}
\frac{d^k}{d r^k}\big((-\Delta_{\bbS^{n-1}})^{1/2} f(r,\dott)\big)(r_0)=(-\Delta_{\bbS^{n-1}})^{1/2}\bigg(\frac{\partial^k f}{\partial r^k}(r_0,\dott)\bigg)\in L^2({\bbS^{n-1}};d^{n-1}\omega).
\end{equation}
In particular, for $r\in(0,\infty)$, the function
\begin{equation}
r\mapsto (-\Delta_{\bbS^{n-1}})^{1/2}f(r,\dott)\in L^2({\bbS^{n-1}};d^{n-1}\omega)
\end{equation}
lies in $C_0^\infty\big((0,\infty),L^2({\bbS^{n-1}};d^{n-1}\omega)\big)$.
\end{lemma}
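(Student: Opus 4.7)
The plan is to work with the spherical harmonic expansion of $f$. Since $\{\varphi_{j,\ell}\}_{j\in\bbN_0,\,\ell\in\{1,\ldots,m(\lambda_j)\}}$ is an orthonormal basis of $L^2(\bbS^{n-1};d^{n-1}\omega)$ diagonalizing $-\Delta_{\bbS^{n-1}}$ with eigenvalues $\lambda_j=j(j+n-2)$, one expands
\[
f(r,\theta) = \sum_{j,\ell} F_{f,j,\ell}(r)\,\varphi_{j,\ell}(\theta),
\]
and interprets $(-\Delta_{\bbS^{n-1}})^{1/2}f(r,\cdot)$ via the functional calculus as $\sum_{j,\ell} \sqrt{\lambda_j}\,F_{f,j,\ell}(r)\,\varphi_{j,\ell}$. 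Because $f\in C_0^\infty(\bbR^n\setminus\{0\})$, there exist $0<r_1<r_2<\infty$ with $\operatorname{supp}(f)\subset\{x:r_1\le|x|\le r_2\}$, so each $F_{f,j,\ell}$ lies in $C_0^\infty((0,\infty))$ with support in the common compact set $[r_1,r_2]$, and differentiation under the integral sign yields
\[
\frac{d^k F_{f,j,\ell}}{dr^k}(r) = \int_{\bbS^{n-1}} \overline{\varphi_{j,\ell}(\theta)}\,\frac{\partial^k f}{\partial r^k}(r,\theta)\,d^{n-1}\omega(\theta).
\]

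The central step is to establish rapid decay of these Fourier coefficients in $j$, uniformly in $r$. Using self-adjointness of $(-\Delta_{\bbS^{n-1}})^m$ on the sphere and the eigenvalue equation $(-\Delta_{\bbS^{n-1}})^m \varphi_{j,\ell} = \lambda_j^m \varphi_{j,\ell}$, one has for any $m,k\in\bbN_0$,
\[
\lambda_j^m \, \frac{d^k F_{f,j,\ell}}{dr^k}(r) = \int_{\bbS^{n-1}} \overline{\varphi_{j,\ell}(\theta)}\,\big[(-\Delta_{\bbS^{n-1}})^m \partial_r^k f\big](r,\theta)\,d^{n-1}\omega(\theta).
\]
Bessel's inequality for the orthonormal system $\{\varphi_{j,\ell}\}$ then yields
\[
\sum_{j,\ell} \lambda_j^{2m} \bigg|\frac{d^k F_{f,j,\ell}}{dr^k}(r)\bigg|^2 \leq \big\|(-\Delta_{\bbS^{n-1}})^m \partial_r^k f(r,\cdot)\big\|_{L^2(\bbS^{n-1})}^2,
\]
and the right-hand side is bounded uniformly in $r\in(0,\infty)$ because $\partial_r^k f$ is smooth on $\bbR^n\setminus\{0\}$ with support in the fixed annulus $\{r_1\le|x|\le r_2\}$.

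With $m=1$, this gives absolute and uniform (in $r$) $L^2(\bbS^{n-1})$ convergence of the partial sums of $\sum_{j,\ell}\sqrt{\lambda_j}\,(d^k F_{f,j,\ell}/dr^k)(r)\,\varphi_{j,\ell}$ for every $k\in\bbN_0$, since for any tail index set $A$ one has
\[
\bigg\|\sum_{(j,\ell)\in A} \sqrt{\lambda_j}\,\frac{d^k F_{f,j,\ell}}{dr^k}(r)\,\varphi_{j,\ell}\bigg\|_{L^2(\bbS^{n-1})}^2 = \sum_{(j,\ell)\in A} \lambda_j\bigg|\frac{d^k F_{f,j,\ell}}{dr^k}(r)\bigg|^2,
\]
which vanishes uniformly in $r$ as $A$ exhausts the index set, by the rapid-decay bound with any $m\ge 1$. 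Uniform convergence of the differentiated series then justifies term-by-term differentiation in $r$, yielding the claimed identity
\[
\frac{d^k}{dr^k}\big((-\Delta_{\bbS^{n-1}})^{1/2}f(r,\cdot)\big)(r_0) = (-\Delta_{\bbS^{n-1}})^{1/2}\bigg(\frac{\partial^k f}{\partial r^k}(r_0,\cdot)\bigg)
\]
in $L^2(\bbS^{n-1};d^{n-1}\omega)$, and the common compact support $[r_1,r_2]$ of all $F_{f,j,\ell}$ immediately gives $r\mapsto(-\Delta_{\bbS^{n-1}})^{1/2}f(r,\cdot)\in C_0^\infty((0,\infty),L^2(\bbS^{n-1};d^{n-1}\omega))$. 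The main technical obstacle is the uniform-in-$r$ rapid decay estimate; this is handled by the self-adjoint iteration above and, crucially, by the fact that $f$ has support in a fixed compact subset of $(0,\infty)\times\bbS^{n-1}$ so all $L^2(\bbS^{n-1})$ norms involved are uniformly bounded in $r$.
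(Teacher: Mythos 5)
Your proof is correct and takes essentially the same approach as the paper: both expand in spherical harmonics, exploit the weights $\lambda_j$ to compare $(-\Delta_{\bbS^{n-1}})^{1/2}$ with higher powers of $-\Delta_{\bbS^{n-1}}$, and then pass to the limit in $L^2(\bbS^{n-1})$. The paper bootstraps directly from the convergence of the $\lambda_j^2$-weighted difference quotients (using $\lambda_j\ge 1$ for $j\ge1$) and concludes by induction on $k$, whereas you establish uniform-in-$r$ rapid decay and invoke term-by-term differentiation of a uniformly convergent Banach-space-valued series; these are interchangeable implementations of the same idea.
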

\begin{proof}
First, we note that $-\Delta_{\bbS^{n-1}}$ and $\partial/\partial r$ commute on $C_0^{\infty} (\bbR^n\backslash\{0\})$, that is, for $f\in C_0^{\infty}(\bbR^n\backslash\{0\})$,
\begin{equation}
\pr(-\Delta_{\bbS^{n-1}} f)=-\Delta_{\bbS^{n-1}}\bigg(\frac{\partial f}{\partial r}\bigg)\in C_0^{\infty} (\bbR^n\backslash\{0\}).
\end{equation}
Hence, for $r_0 \in (0,\infty)$ and $f\in C_0^{\infty}(\bbR^n\backslash\{0\})$ real-valued (w.l.o.g., see Remark \ref{r1.1}),
\begin{align}
& \lim_{\varepsilon\downarrow 0}\varepsilon^{-1}\{(-\Delta_{\bbS^{n-1}} f)(r_0+\varepsilon,\dott)-(-\Delta_{\bbS^{n-1}} f)(r_0,\dott)\}  \no   \\
& \quad =\lim_{\varepsilon\downarrow 0}\varepsilon^{-1}\bigg\{\sum_{j\in\bbN_0}\sum_{\ell=1}^{m(\lambda_j)}\lambda_j F_{f,j,\ell}(r_0+\varepsilon)\varphi_{j,\ell}-\sum_{j\in\bbN_0}\sum_{\ell=1}^{m(\lambda_j)}\lambda_j F_{f,j,\ell}(r_0)\varphi_{j,\ell}\bigg\} \no \\
& \quad =\sum_{j\in\bbN_0}\sum_{\ell=1}^{m(\lambda_j)}\lambda_j F_{\frac{\partial f}{\partial r},j,\ell}(r_0)\varphi_{j,\ell},
\end{align}
therefore,
\begin{equation}
\lim_{\varepsilon\downarrow 0}\sum_{j\in\bbN_0}\sum_{\ell=1}^{m(\lambda_j)}\lambda_j^2 \big|\varepsilon^{-1}(F_{f,j,\ell}(r_0+\varepsilon)-F_{f,j,\ell}(r_0))-F_{\frac{\partial f}{\partial r},j,\ell}(r_0)\big|^2=0.
\end{equation}
As $\lambda_j\geq1,\ j\geq 1,$ this shows that
\begin{equation}
\lim_{\varepsilon\downarrow 0}\sum_{j\in\bbN_0}\sum_{\ell=1}^{m(\lambda_j)}\lambda_j\big|\varepsilon^{-1}(F_{f,j,\ell}(r_0+\varepsilon)-F_{f,j,\ell}(r_0))-F_{\frac{\partial f}{\partial r},j,\ell}(r_0)\big|^2=0,
\end{equation}
implying  
\begin{equation}\lb{3.16}
\lim_{\varepsilon \downarrow 0}\bigg\|\sum_{j\in\bbN_0}\sum_{\ell=1}^{m(\lambda_j)}\lambda_j^{1/2}\big[\varepsilon^{-1}(F_{f,j,\ell}(r_0+\varepsilon)-F_{f,j,\ell}(r_0))-F_{\frac{\partial f}{\partial r},j,\ell}(r_0)\big]\varphi_{j,\ell} \bigg\|^2_{L^2({\bbS^{n-1}};d^{n-1}\omega)} = 0.
\end{equation}
Thus, for $r_0 \in (0,\infty)$ and $f\in C_0^{\infty}(\bbR^n\backslash\{0\})$,
\begin{align}
& \frac{d}{d r}\big((-\Delta_{\bbS^{n-1}})^{1/2}f(r,\dott)\big)(r_0)=\lim_{\varepsilon\downarrow 0}\varepsilon^{-1}\{(-\Delta_{\bbS^{n-1}})^{1/2}f(r_0+\varepsilon,\dott)-(-\Delta_{\bbS^{n-1}})^{1/2}f(r_0,\dott)\}  \no   \\
& \quad =\lim_{\varepsilon\downarrow 0}\sum_{j\in\bbN_0}\sum_{\ell=1}^{m(\lambda_j)}\lambda_j^{1/2}\big[\varepsilon^{-1}( F_{f,j,\ell}(r_0+\varepsilon)-F_{f,j,\ell}(r_0)) 
-F_{\frac{\partial f}{\partial r},j,\ell}(r_0)\big]\varphi_{j,\ell}     \no \\
&\hspace*{3.6cm} 
+\sum_{j\in\bbN_0}\sum_{\ell=1}^{m(\lambda_j)}\lambda_j^{1/2} F_{\frac{\partial f}{\partial r},j,\ell}(r_0)\varphi_{j,\ell} \no \\
& \quad =\lim_{\varepsilon\downarrow 0}\sum_{j\in\bbN_0}\sum_{\ell=1}^{m(\lambda_j)}\lambda_j^{1/2}\big[\varepsilon^{-1}( F_{f,j,\ell}(r_0+\varepsilon)-F_{f,j,\ell}(r_0)) \\
&\hspace*{3.6cm}-F_{\frac{\partial f}{\partial r},j,\ell}(r_0)\big]\varphi_{j,\ell}+(-\Delta_{\bbS^{n-1}})^{1/2}\bigg(\frac{\partial f}{\partial r}(r_0,\dott)\bigg). \no 
\end{align}
Hence, applying \eqref{3.16}, one concludes that
\begin{equation}
\frac{d}{d r}\big(\big((-\Delta_{\bbS^{n-1}})^{1/2}f\big)(r,\dott)\big)(r_0)=(-\Delta_{\bbS^{n-1}})^{1/2}\bigg(\frac{\partial f}{\partial r}(r_0,\dott)\bigg).
\end{equation}
The lemma now follows by induction on $k \in \bbN$.
\end{proof}

\begin{lemma}\lb{l3.4}
Let $g \in C^{\infty}_0((0, \infty), L^2(\mathbb{S}^{n-1}, d^{n-1}\omega))$ and supposed that for all $r > 0,\ g(r)$ is a real-valued function on $\mathbb{S}^{n-1}$. Then, for $n \in \mathbb{N},\ n\geq 2$, one has  
\begin{align}
& - \int_0^\infty \big(r^{-(n-1)} (d/dr)\big(r^{n-1}(d/dr)\big(r^{-2}g\big)\big),g\big)_{L^2(\mathbb{S}^{n-1}, d^{n-1}\omega)} r^{n-1}dr     \lb{3.19} \\
& \quad =\int_0^\infty r^{-2}(g',g')_{L^2(\mathbb{S}^{n-1}, d^{n-1}\omega)}\, r^{n-1}dr+(n-4)\int_0^\infty r^{-4}(g,g)_{L^2(\mathbb{S}^{n-1}, d^{n-1}\omega)}\, r^{n-1}dr.   \no 
\end{align}
\end{lemma}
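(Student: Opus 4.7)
The plan is to reduce the left-hand side to the right-hand side by two successive integrations by parts in the radial variable $r$, exploiting the fact that $g$ has compact support in $(0,\infty)$ so that all boundary contributions vanish. Since $g \in C^\infty_0((0,\infty), L^2(\bbS^{n-1}, d^{n-1}\omega))$ takes real-valued values (pointwise in $r$, as a function on $\bbS^{n-1}$), the $L^2(\bbS^{n-1}, d^{n-1}\omega)$-inner product is real and symmetric, which will let me write $(g,g')_{L^2} = \f12 (d/dr)(g,g)_{L^2}$ in the second step.

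First, I would observe that the factor $r^{n-1}$ on the outside cancels the $r^{-(n-1)}$ inside the integrand, so the left-hand side equals
\begin{equation*}
-\int_0^\infty \big((d/dr)\big(r^{n-1}(d/dr)(r^{-2}g)\big),g\big)_{L^2(\bbS^{n-1}, d^{n-1}\omega)}\, dr.
\end{equation*}
Integrating by parts in $r$ (with zero boundary terms, by compact support, and using Lemma \ref{l3.3} to justify the $d/dr$ falling through $(\,\cdot\,,\,\cdot\,)_{L^2}$) this becomes
\begin{equation*}
\int_0^\infty r^{n-1}\big((d/dr)(r^{-2}g),g'\big)_{L^2(\bbS^{n-1}, d^{n-1}\omega)}\, dr.
\end{equation*}
Expanding $(d/dr)(r^{-2}g)= -2r^{-3}g + r^{-2}g'$ splits the integral into the desired gradient term $\int_0^\infty r^{n-3}(g',g')_{L^2}\,dr=\int_0^\infty r^{-2}(g',g')_{L^2}\,r^{n-1}dr$ plus the residual term
\begin{equation*}
-2\int_0^\infty r^{n-4}(g,g')_{L^2(\bbS^{n-1}, d^{n-1}\omega)}\, dr.
\end{equation*}

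For the residual term, I would use real-valuedness to write $2(g,g')_{L^2}=(d/dr)(g,g)_{L^2}$ and integrate by parts once more:
\begin{equation*}
-\int_0^\infty r^{n-4}(d/dr)(g,g)_{L^2(\bbS^{n-1}, d^{n-1}\omega)}\, dr = (n-4)\int_0^\infty r^{n-5}(g,g)_{L^2(\bbS^{n-1}, d^{n-1}\omega)}\, dr,
\end{equation*}
with no boundary contribution, and the factor $r^{n-5}=r^{-4}\cdot r^{n-1}/r^0$ rearranges to give exactly $(n-4)\int_0^\infty r^{-4}(g,g)_{L^2}\,r^{n-1}dr$. Adding the two pieces yields \eqref{3.19}.

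The only non-routine step is justifying that $d/dr$ commutes with $(\,\cdot\,,\,\cdot\,)_{L^2(\bbS^{n-1},d^{n-1}\omega)}$ in the sense that $(d/dr)(g,h)_{L^2}=(g',h)_{L^2}+(g,h')_{L^2}$ for $g,h \in C^\infty_0((0,\infty),L^2(\bbS^{n-1},d^{n-1}\omega))$; this follows from the product rule for strongly differentiable $L^2$-valued functions, guaranteed by Lemma \ref{l3.3}. All remaining computations are routine since the $r$-support of $g$ is a compact subset of $(0,\infty)$, rendering all boundary terms identically zero.
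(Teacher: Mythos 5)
Your proof is correct and follows essentially the same route as the paper: one integration by parts using the product rule for the $L^2(\bbS^{n-1};d^{n-1}\omega)$-valued pairing, expansion of $(d/dr)(r^{-2}g)$, the real-valuedness identity $2(g,g')_{L^2}=(d/dr)(g,g)_{L^2}$, and a second integration by parts. One small caveat: the product rule for strongly $C^1$ Hilbert-space-valued functions is an elementary fact and is what the paper invokes directly, whereas Lemma~\ref{l3.3} (which concerns $(-\Delta_{\bbS^{n-1}})^{1/2}$ commuting with $\partial/\partial r$) is not the right reference for it.
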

\begin{proof}
For $f,g\in C^\infty((0,\infty), L^2(\mathbb{S}^{n-1}, d^{n-1}\omega))$, the product rule
\begin{equation}
\frac{d}{d r}( f, g)_{L^2(\mathbb{S}^{n-1}, d^{n-1}\omega)}=( f', g)_{L^2(\mathbb{S}^{n-1}, d^{n-1}\omega)}+( f, g')_{L^2(\mathbb{S}^{n-1}, d^{n-1}\omega)}
\end{equation}
implies that for $f, g \in C_{0}^{\infty}((0, \infty), L^2(\mathbb{S}^{n-1}, d^{n-1}\omega))$ one has
\begin{equation}
\int_{0}^{\infty}( f', g)_{L^2(\mathbb{S}^{n-1}, d^{n-1}\omega)} \,d r=-\int_{0}^{\infty}( f, g')_{L^2(\mathbb{S}^{n-1}, d^{n-1}\omega)} \,d r.
\end{equation}
Thus, if $g\in C_0^\infty((0,\infty), L^2(\mathbb{S}^{n-1}, d^{n-1}\omega))$ and if for all $r > 0,\ g(r)$ is a real-valued function on $\mathbb{S}^{n-1}$, then
\begin{align}
& - \int_{0}^{\infty}\big(r^{-(n-1)} (d/dr)\big(r^{n-1} (d/dr)\big(r^{-2} g\big)\big), g\big)_{L^2(\mathbb{S}^{n-1}, d^{n-1}\omega)}\, r^{n-1} \,d r \no \\
& \quad = \int_{0}^{\infty}\big((d/dr)\big(r^{-2} g\big), g'\big)_{L^2(\mathbb{S}^{n-1}, d^{n-1}\omega)}\, r^{n-1} d r \no \\
& \quad = - 2\int_{0}^{\infty} r^{-3}( g, g')_{L^2(\mathbb{S}^{n-1}, d^{n-1}\omega)}\, r^{n-1} d r+\int_{0}^{\infty} r^{-2}( g', g')_{L^2(\mathbb{S}^{n-1}, d^{n-1}\omega)}\, r^{n-1} d r  \\
& \quad = - \int_{0}^{\infty} r^{-3}(d/dr)[(g, g)_{L^2(\mathbb{S}^{n-1}, d^{n-1}\omega)}]\, r^{n-1} d r+\int_{0}^{\infty} r^{-2}(g^{\prime}, g^{\prime})_{L^2(\mathbb{S}^{n-1}, d^{n-1}\omega)}\, r^{n-1} d r  \no  \\
& \quad = (n-4) \int_{0}^{\infty} r^{-4}(g, g)_{L^2(\mathbb{S}^{n-1}, d^{n-1}\omega)}\, r^{n-1} d r+\int_{0}^{\infty} r^{-2}(g^{\prime}, g^{\prime})_{L^2(\mathbb{S}^{n-1}, d^{n-1}\omega)}\, r^{n-1} d r . \no
\end{align}
\end{proof}

We now turn to applying Lemmas \ref{l3.3} and \ref{l3.4} to show some useful identities and estimates needed in the proof of Theorem \ref{t3.1}. 

\begin{lemma}\lb{l3.5}
Let $\g\in\bbR$ and $f\in C_0^\infty(\bbR^n\backslash\{0\})$ be real-valued, $n\in\bbN$,  $n\geq2$. Then,
\begin{align}
& - \int_0^\infty\int_{{\bbS^{n-1}}}r^{\frac{\g}{2}}\big\{r^{-(n-1)}(\partial/\partial r)    \no \\
& \hspace*{2.5cm}  \times \big[r^{n-1}(\partial/\partial r)\big(r^{\frac{\g}{2}-2}(-\Delta_{\bbS^{n-1}} f)(r,\t)\big)\big]\big\}f(r,\t)\, d^{n-1} \omega(\theta) \, r^{n-1}dr   \no \\
&\quad=\big(2\g-4^{-1}\g^2-2^{-1}n\g+n-4\big)     \no \\ 
& \qquad \quad \times \int_0^\infty\int_{{\bbS^{n-1}}}r^{\g-4}\big|\big((-\Delta_{\bbS^{n-1}})^{1/2}f\big)(r,\t)\big|^2\, d^{n-1} \omega(\theta) \, r^{n-1}dr  \no  \\
&\qquad +\int_0^\infty r^{\g-2}\big\|(d/dr)\big(\big((-\Delta_{\bbS^{n-1}})^{1/2}f\big)(r,\dott)\big)\big\|^2_{L^2({\bbS^{n-1}};d^{n-1}\omega)}r^{n-1}dr, \lb{3.23}
\end{align}
and hence,
\begin{align}
& - \int_0^\infty\int_{{\bbS^{n-1}}}r^{\frac{\g}{2}}\big\{r^{-(n-1)}(\partial/\partial r)\big[r^{n-1}(\partial/\partial r)  \no \\  & \hspace*{2.5cm} \times \big(r^{\frac{\g}{2}-2}(-\Delta_{\bbS^{n-1}} f)(r,\t)\big)\big]\big\}f(r,\t)\, d^{n-1} \omega(\theta) \, r^{n-1}dr   \no \\
& \quad \geq 4^{-1}n(n-4)\int_0^\infty\int_{{\bbS^{n-1}}}r^{\g-4}\big|\big((-\Delta_{\bbS^{n-1}})^{1/2}f\big)(r,\t)\big|^2\, d^{n-1} \omega(\theta) \, r^{n-1}dr.  \lb{3.24}
\end{align}
\end{lemma}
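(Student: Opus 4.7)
The approach is to reduce the left-hand side of \eqref{3.23} to an instance of Lemma~\ref{l3.4} via the substitution $g=r^{\g/2}(-\Delta_{\bbS^{n-1}})^{1/2}f$, unpack the result through one integration by parts in $r$, and then derive \eqref{3.24} by applying a one-dimensional power-weighted Hardy inequality mode-by-mode in the spherical harmonic basis.

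First I would set $h(r,\t):=\big((-\Delta_{\bbS^{n-1}})^{1/2}f\big)(r,\t)$, which by Lemma~\ref{l3.3} lies in $C_0^\infty\big((0,\infty),L^2(\bbS^{n-1};d^{n-1}\omega)\big)$ and satisfies $(d^k/dr^k)h=(-\Delta_{\bbS^{n-1}})^{1/2}(\partial^k f/\partial r^k)$; since $f$ is real-valued and spherical harmonics may be chosen real-valued, $h$ is real-valued too. Using self-adjointness of $(-\Delta_{\bbS^{n-1}})^{1/2}$ on $L^2(\bbS^{n-1};d^{n-1}\omega)$ together with this commutation to move one factor of $(-\Delta_{\bbS^{n-1}})^{1/2}$ through the radial operator and onto $f$, the left-hand side of \eqref{3.23} becomes
\begin{equation*}
-\int_0^\infty r^{\g/2}\big(r^{-(n-1)}(d/dr)\big[r^{n-1}(d/dr)\big(r^{\g/2-2}h\big)\big],h\big)_{L^2(\bbS^{n-1};d^{n-1}\omega)}\,r^{n-1}\,dr.
\end{equation*}
Setting $g(r,\t):=r^{\g/2}h(r,\t)$, which is real-valued and in $C_0^\infty\big((0,\infty),L^2(\bbS^{n-1};d^{n-1}\omega)\big)$, one has $r^{\g/2-2}h=r^{-2}g$, so the above integral is exactly the left-hand side of \eqref{3.19} applied to $g$.

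Invoking Lemma~\ref{l3.4} rewrites this as $\int_0^\infty r^{-2}(g',g')_{L^2(\bbS^{n-1};d^{n-1}\omega)}r^{n-1}dr+(n-4)\int_0^\infty r^{-4}(g,g)_{L^2(\bbS^{n-1};d^{n-1}\omega)}r^{n-1}dr$, where $g'=(\g/2)r^{\g/2-1}h+r^{\g/2}h'$. Expanding $(g',g')$, using real-valuedness to identify $(h,h')_{L^2(\bbS^{n-1};d^{n-1}\omega)}=2^{-1}(d/dr)(h,h)_{L^2(\bbS^{n-1};d^{n-1}\omega)}$, and performing one integration by parts in $r$ (boundary contributions vanishing by compact support), the $\|h\|^2$ contributions collect into the coefficient $\g^2/4-(\g/2)(n+\g-4)+(n-4)=2\g-\g^2/4-n\g/2+n-4$, while $\int_0^\infty r^{\g-2}\|h'\|^2 r^{n-1}dr$ is left intact; this is \eqref{3.23}.

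For \eqref{3.24} I would expand $h$ in spherical harmonics so that $\|h(r,\dott)\|^2_{L^2(\bbS^{n-1};d^{n-1}\omega)}=\sum_{j,\ell}|F_{h,j,\ell}(r)|^2$ (and analogously for $\|h'\|^2$), then apply the standard one-dimensional power-weighted Hardy inequality $\int_0^\infty r^{n+\g-3}|u'|^2\,dr\geq[(n+\g-4)/2]^2\int_0^\infty r^{n+\g-5}|u|^2\,dr$ (valid for $u\in C_0^\infty((0,\infty))$) to each scalar coefficient $F_{h,j,\ell}$ and sum to obtain $\int_0^\infty r^{\g-2}\|h'\|^2 r^{n-1}dr\geq[(n+\g-4)/2]^2\int_0^\infty r^{\g-4}\|h\|^2 r^{n-1}dr$. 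Since a direct computation yields $(2\g-\g^2/4-n\g/2+n-4)+[(n+\g-4)/2]^2=n(n-4)/4$, combining with \eqref{3.23} gives \eqref{3.24}. The main obstacle I anticipate is the arithmetic bookkeeping in the previous paragraph: confirming that after expansion and one integration by parts the coefficient of $\int_0^\infty r^{\g-4}\|h\|^2 r^{n-1}dr$ really is $2\g-\g^2/4-n\g/2+n-4$ and that its deficit against $n(n-4)/4$ is exactly the Hardy constant $[(n+\g-4)/2]^2$; real-valuedness of $f$ is crucially used twice, namely to make $g$ real-valued so Lemma~\ref{l3.4} applies, and for the symmetric identity $(h,h')=2^{-1}(d/dr)(h,h)$.
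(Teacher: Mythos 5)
Your proof is correct and follows the same route as the paper: the key steps --- moving one factor of $(-\Delta_{\bbS^{n-1}})^{1/2}$ onto $f$ via self-adjointness and real-valuedness, applying Lemma~\ref{l3.4} to $g=r^{\g/2}\big((-\Delta_{\bbS^{n-1}})^{1/2}f\big)$, expanding $g'$, and integrating by parts once --- are exactly the paper's, and the coefficient arithmetic $\g^2/4-(\g/2)(n+\g-4)+(n-4)=2\g-\g^2/4-n\g/2+n-4$ and the deficit $n(n-4)/4-\big(2\g-\g^2/4-n\g/2+n-4\big)=[(n+\g-4)/2]^2$ both check out. The only cosmetic differences are that you justify real-valuedness of $(-\Delta_{\bbS^{n-1}})^{1/2}f$ through real spherical harmonics rather than the positive-kernel/functional-calculus argument the paper invokes, and for \eqref{3.24} you re-derive the vector-valued Hardy inequality from the scalar Lemma~\ref{l3.13a} by summing over modes instead of quoting \cite[Theorem~4.2]{GLMP22}; both are valid.
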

\begin{proof}
Since the resolvent of $-\Delta_{\mathbb{S}^{n-1}}$ is an integral operator with positive integral kernel, if $f\in C_0^\infty\big(\mathbb{R}^n\backslash\{0\}\big)$ is real-valued, then by \cite[Sect.~3.1]{Ha06}, or,  \cite[eq.~(V.3.45)]{Ka80a}, this implies that  $(-\Delta_{\mathbb{S}^{n-1}})^{1/2} f$ is real-valued. So we can apply Lemma \ref{l3.3} and Lemma \ref{l3.4} with  $g\in C_0^\infty((0,\infty),  L^2(\mathbb{S}^{n-1}, d^{n-1}\omega))$ being  the function
\begin{equation}
r\mapsto r^{\gamma/2}\big((-\Delta_{\mathbb{S}^{n-1}})^{1/2}f\big)(r,\cdot).
\end{equation} 
One verifies that 
\begin{align}
&- \int_0^\infty\int_{{\bbS^{n-1}}}r^{\frac{\g}{2}}\big\{r^{-(n-1)}(\partial/\partial r)     \no \\
& \hspace*{2.5cm} \times \big[r^{n-1}(\partial/\partial r)\big(r^{\frac{\g}{2}-2}(-\Delta_{\bbS^{n-1}} f)(r,\t)\big)\big]\big\}f(r,\t)\, d^{n-1} \omega(\theta) \, r^{n-1}dr   \no \\
&\quad = - \int_0^\infty\int_{{\bbS^{n-1}}}(-\Delta_{\bbS^{n-1}})^{1/2}\big\{r^{-(n-1)}(\partial/\partial r)\big[r^{n-1}(\partial/\partial r)\big(r^{\frac{\g}{2}-2}f(r,\t)\big)\big]\big\}  \no  \\
&\hspace*{2.6cm} \times(-\Delta_{\bbS^{n-1}})^{1/2}(r^{\g/2}f)(r,\t)\, d^{n-1} \omega(\theta) \, r^{n-1}dr  \no  \\
& \quad = - \int_0^\infty\int_{{\bbS^{n-1}}} r^{-(n-1)}(d/d r)\big[r^{n-1}(d/d r)\big(r^{\frac{\g}{2}-2}\big((-\Delta_{\bbS^{n-1}})^{1/2}f\big)(r,\t)\big)\big]  \no  \\
&\hspace*{2.6cm} \times r^{\g/2}\big((-\Delta_{\bbS^{n-1}})^{1/2}f\big)(r,\t)\, d^{n-1} \omega(\theta) 
\, r^{n-1}dr  \no  \\
& \quad = - \int_0^\infty \big(r^{-(n-1)}(d/dr)\big(r^{n-1}(d/dr)\big(r^{-2}g\big)\big),g\big)_{L^2({\bbS^{n-1}};d^{n-1}\omega)}\, r^{n-1}dr \no  \\
& \quad =\int_0^\infty r^{-2}(g',g')_{L^2({\bbS^{n-1}};d^{n-1}\omega)}\, r^{n-1}dr+(n-4)\int_0^\infty r^{-4}(g,g)_{L^2({\bbS^{n-1}};d^{n-1}\omega)}\, r^{n-1}dr \no  \\
& \quad =\big(4^{-1}\g^2+n-4\big)\int_0^\infty \int_{\bbS^{n-1}} r^{\g-4}\big|\big((-\Delta_{\bbS^{n-1}})^{1/2}f\big)(r,\t)\big|^2\, d^{n-1}\omega(\theta) \, r^{n-1}dr  \no  \\
&\qquad +\int_0^\infty r^{\g-2}\norm{\frac{d}{dr}\big((-\Delta_{\bbS^{n-1}})^{1/2}f(r,\dott)\big)}^2_{L^2({\bbS^{n-1}};d^{n-1}\omega)}\, r^{n-1}dr \no  \\
&\qquad +\g\int_0^\infty r^{\g-3}\big((-\Delta_{\bbS^{n-1}})^{1/2}f,(d/dr)\big((-\Delta_{\bbS^{n-1}})^{1/2}f\big)\big)_{L^2({\bbS^{n-1}};d^{n-1}\omega)}\, r^{n-1}dr \no  \\
&\quad =\big(4^{-1}\g^2+n-4\big)\int_0^\infty \int_{\bbS^{n-1}} r^{\g-4}\big|\big((-\Delta_{\bbS^{n-1}})^{1/2}f\big)(r,\t)\big|^2\, d^{n-1}\omega(\theta) \, r^{n-1}dr  \no  \\
&\qquad +\int_0^\infty r^{\g-2}\norm{\frac{d}{dr}\big(\big((-\Delta_{\bbS^{n-1}})^{1/2}f\big)(r,\dott)\big)}^2_{L^2({\bbS^{n-1}};d^{n-1}\omega)}\, r^{n-1}dr \no  \\
&\qquad +\frac{\g}{2}\int_0^\infty r^{\g+n-4}(d/dr)\big[\big((-\Delta_{\bbS^{n-1}})^{1/2}f,(-\Delta_{\bbS^{n-1}})^{1/2}f\big)_{L^2({\bbS^{n-1}};d^{n-1}\omega)}\big]\, dr \no  \\
&\quad =\big(2\g-4^{-1}\g^2-2^{-1}n\g+n-4\big)     \no \\ 
& \qquad \quad \times \int_0^\infty \int_{\bbS^{n-1}} r^{\g-4}\big|\big((-\Delta_{\bbS^{n-1}})^{1/2}f\big)(r,\t)\big|^2\, d^{n-1}\omega(\theta) \, r^{n-1}dr  \no  \\
&\qquad +\int_0^\infty r^{\g-2}\norm{\frac{d}{dr}\big(\big((-\Delta_{\bbS^{n-1}})^{1/2}f\big)(r,\dott)\big)}^2_{L^2({\bbS^{n-1}};d^{n-1}\omega)}\, r^{n-1}dr,
\end{align}
where the last step follows from integration by parts and collecting similar terms. This proves \eqref{3.23}. Finally, by \cite[Theorem 4.2]{GLMP22},
\begin{align}
& \int_0^\infty r^{\g-2} \norm{\frac{d}{dr}\big(\big((-\Delta_{\bbS^{n-1}})^{1/2}f\big)(r,\dott)\big)}^2_{L^2({\bbS^{n-1}};d^{n-1}\omega)}\, r^{n-1}dr \lb{3.27} \\
& \quad \geq 4^{-1}(\gamma+n-4)^{2}\int_0^\infty \int_{\bbS^{n-1}} r^{\g-4}\big|\big((-\Delta_{\bbS^{n-1}})^{1/2}f\big)(r,\t)\big|^2\, d^{n-1}\omega(\theta) \, r^{n-1}dr.  \no 
\end{align}
Thus, \eqref{3.24} follows from \eqref{3.23} and \eqref{3.27}.
\end{proof}

The strategy of proof used for Lemma \ref{l3.5} is employed in a similar fashion in proving the following lemmas. As such, we will mainly dwell on the important differences in the proofs below.

\begin{lemma}\lb{l3.6}
Let $\g\in\bbR$ and $f\in C_0^\infty(\bbR^n\backslash\{0\})$ be real-valued, $n\in\bbN$, $n\geq2$. Then,
\begin{align}
\begin{split}
& \int_0^\infty\int_{\bbS^{n-1}} r^{\frac{\g}{2}-1}\big[(\partial/\partial r)\big(r^{\frac{\g}{2}-2}(-\Delta_{\bbS^{n-1}} f)(r,\t)\big)\big]f(r,\t)\,  d^{n-1}\omega(\theta) \, r^{n-1}dr\\
&\quad =-\frac{n}{2}\int_0^\infty\int_{\bbS^{n-1}} r^{\g-4}\big|\big((-\Delta_{\bbS^{n-1}})^{1/2}f\big)(r,\t)\big|^2\, d^{n-1}\omega(\theta) \, r^{n-1}dr.
\end{split}
\end{align}
\end{lemma}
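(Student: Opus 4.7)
The plan is to expand the radial derivative by the product rule, then convert all angular integrals into $L^2(\bbS^{n-1})$ pairings involving $g(r,\cdot) := \big((-\Delta_{\bbS^{n-1}})^{1/2}f\big)(r,\cdot)$, and finally integrate by parts once in $r$.

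First I would apply the product rule:
\begin{align*}
r^{\frac{\g}{2}-1}(\partial/\partial r)\big(r^{\frac{\g}{2}-2}(-\Delta_{\bbS^{n-1}} f)\big)
= \big(\tfrac{\g}{2}-2\big) r^{\g-4}(-\Delta_{\bbS^{n-1}} f) + r^{\g-3}(\partial/\partial r)(-\Delta_{\bbS^{n-1}} f).
\end{align*}
Multiplying by $f(r,\theta)$ and integrating over $\bbS^{n-1}$ against $d^{n-1}\omega$, I would use self-adjointness of $(-\Delta_{\bbS^{n-1}})^{1/2}$ together with the fact (from Remark \ref{r1.1} and the positivity-preservation argument cited in the proof of Lemma \ref{l3.5}) that $g(r,\cdot)$ is real-valued, to write
\begin{align*}
\int_{\bbS^{n-1}} (-\Delta_{\bbS^{n-1}} f)(r,\theta)\, f(r,\theta)\, d^{n-1}\omega(\theta) = \|g(r,\cdot)\|^2_{L^2(\bbS^{n-1};d^{n-1}\omega)}.
\end{align*}
For the second piece I would invoke Lemma \ref{l3.3} to commute $\partial/\partial r$ with $(-\Delta_{\bbS^{n-1}})^{1/2}$, which yields
\begin{align*}
\int_{\bbS^{n-1}} \big[(\partial/\partial r)(-\Delta_{\bbS^{n-1}} f)\big] f\, d^{n-1}\omega
= \big((d/dr)g(r,\cdot),\, g(r,\cdot)\big)_{L^2(\bbS^{n-1};d^{n-1}\omega)}
= \tfrac{1}{2}\tfrac{d}{dr}\|g(r,\cdot)\|^2_{L^2(\bbS^{n-1};d^{n-1}\omega)}.
\end{align*}

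Writing $u(r) := \|g(r,\cdot)\|^2_{L^2(\bbS^{n-1};d^{n-1}\omega)}$, the left-hand side of the lemma becomes
\begin{align*}
\int_0^\infty \Big[\big(\tfrac{\g}{2}-2\big) r^{\g+n-5} u(r) + \tfrac{1}{2}\, r^{\g+n-4}\, u'(r)\Big]\,dr.
\end{align*}
Next, I would integrate the second summand by parts in $r$; the boundary terms vanish since $f$ (hence $g$) has compact support in $(0,\infty)\times\bbS^{n-1}$. This produces $-\tfrac{\g+n-4}{2}\int_0^\infty r^{\g+n-5} u(r)\, dr$, and the overall coefficient collapses to
\begin{align*}
\big(\tfrac{\g}{2}-2\big) - \tfrac{\g+n-4}{2} = -\tfrac{n}{2},
\end{align*}
the $\g$-dependence cancelling. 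Rewriting $r^{\g+n-5}dr = r^{\g-4}\, r^{n-1}dr$ and restoring the angular integral delivers the claimed identity.

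I do not anticipate a serious obstacle; the only subtle point is the justification for pulling $(-\Delta_{\bbS^{n-1}})^{1/2}$ past $\partial/\partial r$ and for using self-adjointness, both of which are already supplied by Lemma \ref{l3.3} and the reality argument recalled in Lemma \ref{l3.5}. The remainder is careful bookkeeping of exponents combined with one integration by parts in $r$.
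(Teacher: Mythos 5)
Your proof is correct and follows the route the paper has in mind, since the published proof simply defers to the method of Lemma \ref{l3.5} (pass $-\Delta_{\bbS^{n-1}}$ to the function via self-adjointness and the reality of $(-\Delta_{\bbS^{n-1}})^{1/2}f$, commute $\partial/\partial r$ past $(-\Delta_{\bbS^{n-1}})^{1/2}$ by Lemma \ref{l3.3}, then integrate by parts in $r$). Your exponent bookkeeping and the collapse of the coefficient to $-n/2$ both check out, so nothing further is needed.
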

\begin{proof}
The result follows similarly to the proof of Lemma \ref{l3.5} by applying Lemma \ref{l3.3}.
\end{proof}

\begin{lemma}\lb{l3.7}
Let $\g\in \bbR$ and $f\in C_0^\infty(\bbR^n\backslash\{0\})$ be real-valued, $n\in\bbN$,  $n\geq 2$. Then, 
\begin{align}
& \int_0^\infty\int_{\bbS^{n-1}} r^{\frac{\g}{2}-2}\big[(-\Delta_{\bbS^{n-1}})\big(-r^{\frac{\g}{2}-(n-1)}(\partial/\partial r)
\no \\
& \hspace*{2.5cm} \times \big(r^{n-1}(\partial f/\partial r)(r,\t)\big)\big)\big]f(r,\t)\,  d^{n-1}\omega(\theta) \, r^{n-1}dr  \no  \\
&\quad =-2^{-1}(\g-2)(\g+n-4)\int_0^\infty\int_{\bbS^{n-1}} r^{\g-4}\big|\big((-\Delta_{\bbS^{n-1}})^{1/2}f\big)(r,\t)\big|^2\, d^{n-1}\omega(\theta) \, r^{n-1}dr  \no  \\
&\qquad +\int_0^\infty r^{\g-2}\norm{\frac{d}{dr}\big(\big((-\Delta_{\bbS^{n-1}})^{1/2}f\big)(r,\dott)\big)}^2_{L^2({\bbS^{n-1}};d^{n-1}\omega)}\, r^{n-1}dr,  \lb{3.29}
\end{align}
and hence,
\begin{align}
&\int_0^\infty\int_{\bbS^{n-1}} r^{\frac{\g}{2}-2}\big[(-\Delta_{\bbS^{n-1}})\big(-r^{\frac{\g}{2}-(n-1)}(\partial/\partial r) 
\no \\
& \hspace*{2.5cm} \times \big(r^{n-1}(\partial f/\partial r)(r,\t)\big)\big)\big]f(r,\t)\,  d^{n-1}\omega(\theta) \, r^{n-1}dr  \lb{3.30} \\
&\quad \geq4^{-1}(n-\g)(\g+n-4)\int_0^\infty\int_{\bbS^{n-1}} r^{\g-4}\big|\big((-\Delta_{\bbS^{n-1}})^{1/2}f\big)(r,\t)\big|^2\, d^{n-1}\omega(\theta) \, r^{n-1}dr.   \no
\end{align}
\end{lemma}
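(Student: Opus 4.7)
The plan is to mimic the strategy of Lemma \ref{l3.5}: commute $(-\Delta_{\bbS^{n-1}})^{1/2}$ past the radial operations using Lemma \ref{l3.3}, symmetrize the $-\Delta_{\bbS^{n-1}}$ across both factors in the inner product on ${\bbS^{n-1}}$, and then integrate by parts in the radial variable $r$. The realness of $f$ (hence of $(-\Delta_{\bbS^{n-1}})^{1/2}f$, as argued in the proof of Lemma \ref{l3.5}) is what allows this symmetrization and the pointwise squaring of the absolute value.

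Concretely, I would first write the integrand of the left-hand side of \eqref{3.29} as
$r^{\g/2-2}(-\Delta_{\bbS^{n-1}})[g(r,\t)]\,f(r,\t)$, where $g(r,\t) := -r^{\g/2-(n-1)}(\partial/\partial r)\big(r^{n-1}(\partial f/\partial r)(r,\t)\big)$, and split the angular operator as
$\int_{\bbS^{n-1}}(-\Delta_{\bbS^{n-1}} g)\,f\,d^{n-1}\omega = \int_{\bbS^{n-1}}(-\Delta_{\bbS^{n-1}})^{1/2}g\cdot (-\Delta_{\bbS^{n-1}})^{1/2}f\,d^{n-1}\omega$ via self-adjointness. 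By Lemma \ref{l3.3}, $(-\Delta_{\bbS^{n-1}})^{1/2}$ commutes with $\partial/\partial r$ and with multiplication by functions of $r$ alone. Setting $h(r,\t):=\big((-\Delta_{\bbS^{n-1}})^{1/2}f\big)(r,\t)$, the left-hand side of \eqref{3.29} then reduces (after cancelling the powers of $r$ against the $r^{n-1}$ volume factor) to
\begin{equation}
-\int_0^\infty\int_{\bbS^{n-1}} r^{\g-2}\,(\partial/\partial r)\big(r^{n-1}(\partial h/\partial r)(r,\t)\big)\,h(r,\t)\, d^{n-1}\omega(\t)\, dr.
\end{equation}
Integrating by parts once in $r$ (boundary terms vanish since $f$, and hence $h$, has compact support in $(0,\infty)\times{\bbS^{n-1}}$) yields two pieces: one of the form $(\g-2)\int r^{\g+n-4} h\, (\partial h/\partial r)$ and one of the form $\int r^{\g+n-3}(\partial h/\partial r)^2$. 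Writing the first as $\tfrac{\g-2}{2}\int r^{\g+n-4}(d/dr)(h^2)$ and integrating by parts in $r$ once more produces the factor $-\tfrac{(\g-2)(\g+n-4)}{2}\int r^{\g+n-5}h^2$, which is precisely the first term on the right-hand side of \eqref{3.29} after relabelling $r^{\g+n-5}=r^{\g-4}\cdot r^{n-1}$. The remaining piece gives the radial $L^2$-norm term in \eqref{3.29}.

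For the estimate \eqref{3.30} I would then apply the one-dimensional weighted Hardy inequality \eqref{3.27} (from \cite[Theorem~4.2]{GLMP22}, already invoked in the proof of Lemma \ref{l3.5}) to the radial-derivative term in \eqref{3.29}, producing the additional factor $(\g+n-4)^2/4$. Combining constants,
\begin{equation}
-\frac{(\g-2)(\g+n-4)}{2}+\frac{(\g+n-4)^2}{4}=\frac{(\g+n-4)\big[-2(\g-2)+(\g+n-4)\big]}{4}=\frac{(n-\g)(\g+n-4)}{4},
\end{equation}
which is exactly the coefficient in \eqref{3.30}.

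I do not anticipate a substantive conceptual obstacle: the nontrivial input (commutation of $(-\Delta_{\bbS^{n-1}})^{1/2}$ with $\partial/\partial r$ together with preservation of reality) is already packaged in Lemma \ref{l3.3} and the cited discussion in the proof of Lemma \ref{l3.5}, and the required one-dimensional Hardy estimate \eqref{3.27} is directly available. The main risk is purely in the bookkeeping of powers of $r$ when transferring between the form $\int_0^\infty(\dott)\,r^{n-1}dr$ and the form $\int_0^\infty(\dott)\,dr$ during the two rounds of radial integration by parts; I would track these carefully to ensure the exponents $\g-4$, $\g-2$, $\g+n-5$, $\g+n-4$, and $\g+n-3$ land where stated.
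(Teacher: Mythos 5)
Your proposal is correct and follows essentially the same route as the paper: commute $(-\Delta_{\bbS^{n-1}})^{1/2}$ past $\partial/\partial r$ via Lemma \ref{l3.3}, symmetrize the angular operator using self-adjointness and reality of $(-\Delta_{\bbS^{n-1}})^{1/2}f$, perform radial integrations by parts to obtain \eqref{3.29}, and then apply the weighted one-dimensional Hardy inequality \eqref{3.27} to get \eqref{3.30}. The only organizational difference is that you keep the radial Laplacian in divergence form $(\partial/\partial r)(r^{n-1}\partial/\partial r)$ and integrate by parts directly, whereas the paper first expands it into a second-order term plus a first-order term (treating the latter via \eqref{3.32} and the former via the identity \eqref{3.33} and two further integrations by parts as in \eqref{3.34}); your version is a mildly streamlined bookkeeping of the same computation and reaches the identical result.
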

\begin{proof}
A similar calculation as before, applying Lemma \ref{l3.3}, yields
\begin{align}
&\int_0^\infty\int_{\bbS^{n-1}} r^{\frac{\g}{2}-2}\big[(-\Delta_{\bbS^{n-1}})\big(-r^{\frac{\g}{2}-(n-1)}  
\no \\ 
& \hspace*{2.5cm} \times (\partial/\partial r)\big(r^{n-1}(\partial f/\partial r)(r,\t)\big)\big)\big]f(r,\t)\,  d^{n-1}\omega(\theta) \, r^{n-1}dr  \no  \\
&\quad =-\int_0^\infty\int_{\bbS^{n-1}} r^{\g-n-1}\big[(n-1)r^{n-2}\big((-\Delta_{\bbS^{n-1}})^{1/2}(\partial f/\partial r)(r,\t)\big)    \no \\
&\qquad + r^{n-1}(d/dr) \big((-\Delta_{\bbS^{n-1}})^{1/2}(\partial f/\partial r)(r,\t)\big)\big]\big((-\Delta_{\bbS^{n-1}})^{1/2}f\big)(r,\t)\,  d^{n-1}\omega(\theta) \, r^{n-1}dr  \no  \\  
&\quad =(1-n)\int_0^\infty\int_{\bbS^{n-1}} r^{\g-3}(d/dr)\big[\big((-\Delta_{\bbS^{n-1}})^{1/2}f\big)(r,\t)\big]    \no \\
& \hspace*{3.5cm} \times \big((-\Delta_{\bbS^{n-1}})^{1/2}f\big)(r,\t)\,  d^{n-1}\omega(\theta) \, r^{n-1}dr  \lb{3.31} \\
&\qquad -\int_0^\infty\int_{\bbS^{n-1}} r^{\g-2}(d^2/dr^2)\big[\big((-\Delta_{\bbS^{n-1}})^{1/2}f\big)(r,\t)\big] \no \\
& \hspace*{2.6cm} \times \big((-\Delta_{\bbS^{n-1}})^{1/2}f\big)(r,\t)\,  d^{n-1}\omega(\theta) \, r^{n-1}dr.  \no 
\end{align}
When considering the first integral on the right-hand side of \eqref{3.31}, integration by parts yields
\begin{align}
\begin{split} 
& \int_0^\infty\int_{\bbS^{n-1}} r^{\g-3}(d/dr)\big[\big((-\Delta_{\bbS^{n-1}})^{1/2}f\big)(r,\t)\big]\big((-\Delta_{\bbS^{n-1}})^{1/2}f\big)(r,\t)\,  d^{n-1}\omega(\theta) \, r^{n-1}dr \lb{3.32} \\
& \quad = -2^{-1}(\g+n-4)  \int_0^\infty\int_{\bbS^{n-1}} r^{\g-4}\big|\big((-\Delta_{\bbS^{n-1}})^{1/2}f\big)(r,\t)\big|^2\, d^{n-1}\omega(\theta) \, r^{n-1}dr.  
\end{split} 
\end{align}
The second integral on the right-hand side of \eqref{3.31} differs slightly from previous considerations due to the second derivative, however, it follows an analogous calculation by first noting that given $g\in C_0^\infty((0,\infty),L^2({\bbS^{n-1}};d^{n-1}\omega))$ with $g(r)$ real-valued on $\bbS^{n-1}$ for all $r>0$, one has
\begin{equation}
(g'',g)_{L^2({\bbS^{n-1}};d^{n-1}\omega)}=\frac{d}{dr}(g,g')_{L^2({\bbS^{n-1}};d^{n-1}\omega)}-(g',g')_{L^2({\bbS^{n-1}};d^{n-1}\omega)}. \lb{3.33}
\end{equation}
Next, letting $g(r)=\big((-\Delta_{\bbS^{n-1}})^{1/2}f\big)(r,\dott),\ r>0$, applying \eqref{3.33}, and two integrations by parts, one gets 
\begin{align}
& \int_0^\infty \int_{\bbS^{n-1}} r^{\g-2}(d^2/dr^2)\big[\big((-\Delta_{\bbS^{n-1}})^{1/2}f\big)(r,\t)\big]\big((-\Delta_{\bbS^{n-1}})^{1/2}f\big)(r,\t)\,  d^{n-1}\omega(\theta) \, r^{n-1}dr  \no \\
& \quad =2^{-1}(\g+n-3)(\g+n-4)\int_0^\infty\int_{\bbS^{n-1}} r^{\g-4}\big|\big((-\Delta_{\bbS^{n-1}})^{1/2}f\big)(r,\t)\big|^2\, d^{n-1}\omega(\theta) \, r^{n-1}dr  \no  \\
&\qquad -\int_0^\infty r^{\g-2}\norm{\frac{d}{dr}\big(\big((-\Delta_{\bbS^{n-1}})^{1/2}f\big)(r,\dott)\big)}^2_{L^2({\bbS^{n-1}};d^{n-1}\omega)}\, r^{n-1}dr.  \lb{3.34}
\end{align}
Combining \eqref{3.31}, \eqref{3.32}, and \eqref{3.34} proves \eqref{3.29}.

Finally, \eqref{3.30} follows from an application of \cite[Theorem 4.2]{GLMP22} as before.
\end{proof}

\begin{lemma}\lb{l3.8}
Let $\g\in \bbR$ and $f\in C_0^\infty(\bbR^n\backslash\{0\})$ be real-valued, $n\in\bbN$, $n\geq2$. Then,
\begin{align}
& \int_0^\infty\int_{\bbS^{n-1}} r^{\frac{\g}{2}-2}\big[-\Delta_{\bbS^{n-1}}\big(r^{\frac{\g}{2}-1}(\partial f/\partial r)(r,\t)\big)\big]f(r,\t)\, d^{n-1}\omega(\theta) \, r^{n-1}dr \\
& \quad =-2^{-1}(\g+n-4)\int_0^\infty\int_{\bbS^{n-1}} r^{\g-4}\big|\big((-\Delta_{\bbS^{n-1}})^{1/2}f\big)(r,\t)\big|^2\, d^{n-1}\omega(\theta) \, r^{n-1}dr.  \no  
\end{align}
\end{lemma}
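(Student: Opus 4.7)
The plan is to mimic the strategy already used for Lemmas \ref{l3.5}, \ref{l3.6}, and \ref{l3.7}, namely to pull the radial factor out of $-\Delta_{\bbS^{n-1}}$ (which touches only angular variables), transfer one power of $(-\Delta_{\bbS^{n-1}})^{1/2}$ onto $f$ via self-adjointness on $L^2(\bbS^{n-1};d^{n-1}\omega)$, commute $(-\Delta_{\bbS^{n-1}})^{1/2}$ with $\partial/\partial r$ using Lemma \ref{l3.3}, and finish by a single integration by parts in $r$.

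Concretely, since $-\Delta_{\bbS^{n-1}}$ acts only in $\theta$, the integrand simplifies as
\[
r^{\frac{\g}{2}-2}\big[-\Delta_{\bbS^{n-1}}\big(r^{\frac{\g}{2}-1}(\partial f/\partial r)(r,\t)\big)\big]\,f(r,\t)\,r^{n-1}
=r^{\g+n-4}\big[(-\Delta_{\bbS^{n-1}})(\partial f/\partial r)(r,\t)\big]\,f(r,\t).
\]
Since $f$ is real-valued, the cited fact from \cite[Sect.~3.1]{Ha06} (used in Lemma \ref{l3.5}) gives that $(-\Delta_{\bbS^{n-1}})^{1/2}f$ is real-valued; combining the self-adjointness of $(-\Delta_{\bbS^{n-1}})^{1/2}$ on $L^2(\bbS^{n-1};d^{n-1}\omega)$ with Lemma \ref{l3.3} (which commutes $(-\Delta_{\bbS^{n-1}})^{1/2}$ with $\partial/\partial r$) yields
\[
\int_{\bbS^{n-1}}\big[-\Delta_{\bbS^{n-1}}(\partial f/\partial r)(r,\t)\big]f(r,\t)\,d^{n-1}\omega(\theta)
=\int_{\bbS^{n-1}}\frac{\partial}{\partial r}\big((-\Delta_{\bbS^{n-1}})^{1/2}f\big)(r,\t)\,\big((-\Delta_{\bbS^{n-1}})^{1/2}f\big)(r,\t)\,d^{n-1}\omega(\theta),
\]
which equals $\tfrac{1}{2}\tfrac{d}{dr}\big\|\big((-\Delta_{\bbS^{n-1}})^{1/2}f\big)(r,\dott)\big\|^2_{L^2(\bbS^{n-1};d^{n-1}\omega)}$.

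Consequently the quantity to evaluate becomes
\[
\tfrac{1}{2}\int_0^\infty r^{\g+n-4}\,\frac{d}{dr}\big\|\big((-\Delta_{\bbS^{n-1}})^{1/2}f\big)(r,\dott)\big\|^2_{L^2(\bbS^{n-1};d^{n-1}\omega)}\,dr.
\]
Since $f\in C_0^\infty(\bbR^n\backslash\{0\})$ forces $\big((-\Delta_{\bbS^{n-1}})^{1/2}f\big)(r,\dott)$ to vanish for $r$ near $0$ and for large $r$ (Lemma \ref{l3.3}), one integration by parts in $r$ kills the boundary terms and produces $-\tfrac{1}{2}(\g+n-4)\int_0^\infty r^{\g+n-5}\|\cdot\|^2\,dr$, i.e., exactly the right-hand side of the claim.

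I do not expect any serious obstacle here: the identity is purely computational once Lemma \ref{l3.3} is in hand. The only point worth care is the justification of the integration by parts and of interchanging $\partial/\partial r$ with $(-\Delta_{\bbS^{n-1}})^{1/2}$, but both are delivered by Lemma \ref{l3.3} and the compact support of $f$ away from the origin, exactly as in the proofs of Lemmas \ref{l3.5}--\ref{l3.7}.
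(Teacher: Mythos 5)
Your proof is correct and follows essentially the same approach the paper sketches (the paper's proof is one line, deferring to the method of Lemma \ref{l3.5} via Lemma \ref{l3.3}, which is precisely what you do: pull out the radial factor, move one square root of $-\Delta_{\bbS^{n-1}}$ onto $f$ by self-adjointness, commute with $\partial/\partial r$ via Lemma \ref{l3.3}, recognize the remaining angular integral as $\tfrac12 \tfrac{d}{dr}\|(-\Delta_{\bbS^{n-1}})^{1/2}f(r,\cdot)\|^2$, and integrate by parts in $r$). The bookkeeping of powers, the sign, and the justification for dropping boundary terms are all in order.
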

\begin{proof}
The result follows similarly to the proof of Lemma \ref{l3.5} by applying Lemma \ref{l3.3}.
\end{proof}

We are now in a position to provide a proof of Theorem \ref{t3.1}.

\begin{proof}[Proof of Theorem \ref{3.1}]
The proof is analogous to that of Theorem \ref{t2.1} by first noting that
\begin{align}
\begin{split} 
\big(T_{\a,\b,\g,\tau}^+T_{\a,\b,\g,\tau}f\big)&=\big(T_{\a,\b,\g}^+T_{\a,\b,\g}f\big)+\tau \big[T_{\a,\b,\g}^+\big(r^{\frac{\g}{2}-2}(-\Delta_{\bbS^{n-1}} f)\big)\big]  \\
&\quad\; +\tau r^{\frac{\g}{2}-2}\big(-\Delta_{\bbS^{n-1}} T_{\a,\b,\g}f\big)+\tau^2 r^{\g-4}\big((-\Delta_{\bbS^{n-1}})^2f\big).    
\end{split} 
\end{align}
The remaining calculations are analogous to the previous results by applying Theorem \ref{t2.1} and Lemmas \ref{l3.5}--\ref{l3.8}, expanding each integral, and gathering similar terms.
\end{proof}

We now turn to some implications of Theorem \ref{t3.1} that are extensions of the results presented in Section \ref{s2}, beginning with an extension of Corollary \ref{c2.3}.

\begin{corollary}\lb{c3.9}
Let $\g,\tau\in\bbR$ and $f \in C^{\infty}_0(\bbR^n \backslash \{0\})$, $n\in\bbN$, $n\geq2$. Then,
\begin{align} 
& \int_{\bbR^n} |x|^\g|(\Delta f)(x)|^2 \, d^n x 
\geq \bigg[\frac{(n-2)^2}{4}-\frac{(\g-2)^2}{4}\bigg]^2 \int_{\bbR^n} |x|^{\g-4} |f(x)|^2 \, d^n x    \no \\
&\quad+2^{-1}\big[\tau(\g+n-4)^2+(n-2)^2-(\g-2)^2 \big] \no \\
&\qquad \times \int_0^\infty\int_{{\bbS^{n-1}}}r^{\g-4}\big|\big((-\Delta_{\bbS^{n-1}})^{1/2}f\big)(r,\t)\big|^2\,  d^{n-1} \omega(\theta) \, r^{n-1}dr       \lb{3.38}  \\
&\quad -2\tau\int_0^\infty\int_{{\bbS^{n-1}}}r^{\g-2}\bigg|\bigg((-\Delta_{\bbS^{n-1}})^{1/2}\frac{\partial f}{\partial r}\bigg)(r,\t)\bigg|^2\,  d^{n-1} \omega(\theta) \, r^{n-1}dr   \no  \\
&\quad -\tau(\tau+2)\int_0^\infty\int_{{\bbS^{n-1}}}r^{\g-4}|(\Delta_{\bbS^{n-1}} f)(r,\t)|^2\, d^{n-1} \omega(\theta) \, r^{n-1}dr.  \no 
\end{align} 
\end{corollary}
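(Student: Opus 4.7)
The plan is to apply \eqref{3.1} directly and rewrite its two Cartesian gradient integrals in polar coordinates, so that they decompose into a purely radial piece and an angular piece expressed through $(-\Delta_{\bbS^{n-1}})^{1/2}$; then I would choose $\a,\b$ so that the radial piece cancels and the $|f|^2$ coefficient is maximal, while the angular piece collects with the $\tau$-dependent spherical integral already present on the right-hand side of \eqref{3.1}. Throughout I may take $f$ real-valued by Remark \ref{r1.1}.

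First I would record the polar identities
\begin{align*}
\int_{\bbR^n}|x|^{\g-2}|(\nabla f)(x)|^2\, d^nx
&= \int_0^\infty\!\!\int_{\bbS^{n-1}} r^{\g-2}|(\partial f/\partial r)(r,\t)|^2\, d^{n-1}\omega(\t)\, r^{n-1}dr \\
&\quad + \int_0^\infty\!\!\int_{\bbS^{n-1}} r^{\g-4}\big|\big((-\Delta_{\bbS^{n-1}})^{1/2}f\big)(r,\t)\big|^2\, d^{n-1}\omega(\t)\, r^{n-1}dr,\\
\int_{\bbR^n}|x|^{\g-4}|x\cdot(\nabla f)(x)|^2\, d^nx
&= \int_0^\infty\!\!\int_{\bbS^{n-1}} r^{\g-2}|(\partial f/\partial r)(r,\t)|^2\, d^{n-1}\omega(\t)\, r^{n-1}dr,
\end{align*}
which follow from $x\cdot\nabla f = r\,\partial f/\partial r$, the orthogonal decomposition $|\nabla f|^2 = |\partial f/\partial r|^2 + r^{-2}|\nabla_{\bbS^{n-1}} f|^2$, and integration by parts on $\bbS^{n-1}$. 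Substituting these into \eqref{3.1}, the coefficient of the radial-derivative integral collapses to $\a(n-\a-\g)-2\b$, while the coefficient of the $|(-\Delta_{\bbS^{n-1}})^{1/2}f|^2$ integral picks up the extra piece $\a(\g+n-4)-2\b$ (coming from the angular part of $|\nabla f|^2$). Since I apply \eqref{3.1} directly rather than \eqref{3.2}, no sign constraint on $\a(\a-4+2\g)$ is needed, which is exactly why Corollary \ref{c3.9} holds for all $\g\in\bbR$ and $n\geq 2$.

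Next I would set $\b = \a(n-\a-\g)/2$, killing the $|\partial f/\partial r|^2$ term. With this choice the $|f|^2$ coefficient reduces to the function $F_{\g,n}(\a)$ already analyzed in the proof of Corollary \ref{c2.3}, whose maximum is attained at $\a = \a_\pm = 2-\g \pm \big[((\g-2)^2+(n-2)^2)/2\big]^{1/2}$ with value $[(n-2)^2/4 - (\g-2)^2/4]^2$, matching the $|f|^2$ constant in \eqref{3.38}.

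The main remaining step is a short algebraic verification: at $\a=\a_\pm$ and $\b = \a_\pm(n-\a_\pm-\g)/2$ I must show $\a(\a-4+2\g) = ((n-2)^2-(\g-2)^2)/2$ and $-[(\g+n-4)(2-\a-\g)+2\b] = (\g+n-4)^2/2$. Setting $u := 2-\a-\g$, one has $u^2 = ((\g-2)^2+(n-2)^2)/2$ at $\a_\pm$ and $u+\a = 2-\g$, and both identities then drop out by direct expansion. Adding the two contributions yields the displayed constant $2^{-1}[\tau(\g+n-4)^2 + (n-2)^2 - (\g-2)^2]$ in \eqref{3.38}, while the remaining two $\tau$-dependent integrals (those involving $|(-\Delta_{\bbS^{n-1}})^{1/2}\partial f/\partial r|^2$ and $|\Delta_{\bbS^{n-1}} f|^2$) transfer unchanged from \eqref{3.1}. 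The hard part is essentially bookkeeping in this final identification; the structural core of the argument is the fact that the polar-coordinate rewriting together with the choice $\b=\a(n-\a-\g)/2$ at the optimizing $\a_\pm$ leaves precisely the four terms in \eqref{3.38}.
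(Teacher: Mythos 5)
Your proposal is correct and follows essentially the same route as the paper: the paper also applies \eqref{3.1} with $\a = \a_\pm = 2-\g \pm \big[((\g-2)^2+(n-2)^2)/2\big]^{1/2}$ and $\b = \a(n-\a-\g)/2$, uses the polar decomposition \eqref{3.41} to combine the two gradient integrals into a purely angular piece with coefficient $\a(\g+n-4)-2\b = \a(\a-4+2\g) = 2^{-1}[(n-2)^2-(\g-2)^2]$, and verifies that the $\tau$-coefficient simplifies to $2^{-1}\tau(\g+n-4)^2$. Your two closing algebraic identities are exactly the paper's equations (3.40) and (3.42), and your explicit remark that invoking \eqref{3.1} (rather than \eqref{3.2}) removes the sign constraint on $\a(\a-4+2\g)$ makes transparent why the corollary holds for all $\g \in \bbR$.
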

\begin{proof}
Without loss of generality (cf.\ Remark \ref{r1.1}) we can assume that $f \in C^{\infty}_0(\bbR^n \backslash \{0\})$ is real-valued. As in the proof of Corollary \ref{c2.3}, one chooses in \eqref{3.1} 
\begin{equation}\lb{3.39}
\alpha=\alpha_{\pm} = 2-\g \pm \big[\big(\g^2-4\g+n^2-4n+8\big)/2\big]^{1/2}\quad \text{and} \quad \b=\a(n-\a-\g)/2.
\end{equation}
Direct computation with these choices yields
\begin{equation}
-\tau[(\g+n-4)(2-\a-\g)+2\b]=2^{-1}\tau(\g+n-4)^2,\quad \g,\tau\in\bbR,\ n\geq2.
\end{equation}
Next, letting $\nabla_{\bbS^{n-1}}$ denote the gradient operator on ${\bbS^{n-1}}$, one notes that
\begin{equation}\lb{3.41}
|(\nabla f)(x)|^2=|(\partial f/\partial r)(r,\theta)|^2+r^{-2}|(\nabla_{\bbS^{n-1}} f(r,\dott))(\theta)|^2.
\end{equation}
Furthermore, with $\a$ and $\b$ chosen as in \eqref{3.39}, one confirms that
\begin{equation}\lb{3.42}
\a(\g+n-4)-2\b=\a(\a-4+2\g)=2^{-1}\big[(n-2)^2-(\g-2)^2\big],
\end{equation}
so that by \eqref{3.39} and \eqref{3.42}, the first two integrals on the right-hand side of \eqref{3.1} can be combined to give
\begin{align}\lb{3.43}
&2^{-1}\big[(n-2)^2-(\g-2)^2 \big] \int_0^\infty\int_{{\bbS^{n-1}}}r^{\g-4}|(\nabla_{\bbS^{n-1}} f)(r,\t)|^2\,  d^{n-1} \omega(\theta) \, r^{n-1}dr     \\
&=2^{-1}\big[(n-2)^2-(\g-2)^2 \big] \int_0^\infty\int_{{\bbS^{n-1}}}r^{\g-4}\big|\big((-\Delta_{\bbS^{n-1}})^{1/2}f\big)(r,\t)\big|^2\,  d^{n-1} \omega(\theta) \, r^{n-1}dr,    \no 
\end{align}
where equality comes from integration by parts and utilizing the $L^2(S^{n-1};d^{n-1}\omega)$ inner product similar to previous calculations, completing the proof.
\end{proof}

\begin{remark}\lb{r3.10}
Setting $\tau=0$ in \eqref{3.38} yields an extension of Corollary \ref{c2.3} in the sense of the additional spherical term given in \eqref{3.43}, which, under the assumptions of Corollary \ref{c2.3}, will have a positive coefficient.
\hfill$\diamond$
\end{remark}

\begin{theorem}[{\cite[Theorem 3.1]{CM12}}]\lb{t3.11}
Let $\g\in\bbR$ and $f \in C^{\infty}_0(\bbR^n \backslash \{0\})$, $n\in\bbN$, $n\geq2$. Then,
\begin{equation}\lb{3.44}
\int_{\bbR^n} |x|^\g|(\Delta f)(x)|^2 \, d^n x 
 \geq C_{n,\g} \int_{\bbR^n} |x|^{\g-4} |f(x)|^2 \, d^n x,
\end{equation}
where
\begin{equation}\lb{3.45}
C_{n,\g}=\min_{j\in\bbN_0}\Big\{\big[j(j+n-2) + 4^{-1}(n-2)^2 - 4^{-1}(\g-2)^2\big]^2\Big\}.
\end{equation}
\end{theorem}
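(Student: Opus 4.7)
The plan is to apply Corollary~\ref{c3.9} with the judicious choice $\tau = -1$, combine it with a spherical-harmonics expansion of $f$, and invoke a one-dimensional power-weighted Hardy inequality to absorb the remaining radial-gradient term.

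W.l.o.g.\ (see Remark~\ref{r1.1}) assume $f$ real-valued, and expand
\begin{equation*}
f(r,\theta) = \sum_{j \in \bbN_0} \sum_{\ell=1}^{m(\lambda_j)} F_{j,\ell}(r)\, \varphi_{j,\ell}(\theta),
\end{equation*}
with $F_{j,\ell} \in C_0^\infty((0,\infty))$ (by compactness of $\supp f$ in $\bbR^n \backslash \{0\}$). Orthonormality together with $-\Delta_{\bbS^{n-1}} \varphi_{j,\ell} = \lambda_j \varphi_{j,\ell}$ diagonalizes the three spherical integrals on the right-hand side of \eqref{3.38}, producing prefactors $\lambda_j$, $\lambda_j$, and $\lambda_j^2$, respectively, while $\int_{\bbR^n} |x|^{\gamma-4}|f|^2\, d^n x$ diagonalizes with prefactor~$1$.

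The key maneuver is setting $\tau = -1$ in Corollary~\ref{c3.9}: this renders $-\tau(\tau+2)=1$, so the $\lambda_j^2$ contribution appears with a positive sign, while $-2\tau=2>0$ forces the $|(-\Delta_{\bbS^{n-1}})^{1/2} \partial_r f|^2$ integral to be retained with a positive coefficient. To dispose of the latter I apply the one-dimensional weighted Hardy inequality
\begin{equation*}
\int_0^\infty r^{\gamma+n-3}|F'(r)|^2\, dr \geq [(\gamma+n-4)/2]^2 \int_0^\infty r^{\gamma+n-5}|F(r)|^2\, dr,
\end{equation*}
valid for every $F \in C_0^\infty((0,\infty))$ and all $\gamma \in \bbR$ (a special case of Lemma~\ref{l3.13a} from Appendix~\ref{sA}, following from a single integration by parts with no boundary contributions), separately to each $F_{j,\ell}$.

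Collecting the four contributions to the coefficient of $\int_0^\infty r^{\gamma-4}|F_{j,\ell}|^2 r^{n-1}\, dr$, the cross-terms proportional to $\lambda_j(\gamma+n-4)^2$ cancel by design and the remainder telescopes into the perfect square $\bigl[\lambda_j + 4^{-1}(n-2)^2 - 4^{-1}(\gamma-2)^2\bigr]^2$. Bounding this below by $C_{n,\gamma} = \min_{j \in \bbN_0}\{\cdot\}$ mode-by-mode and summing in $(j,\ell)$ via Parseval recovers \eqref{3.44}--\eqref{3.45}. The main obstacle I anticipate is the algebraic bookkeeping confirming the perfect-square identity; the value $\tau = -1$ is precisely tuned so that, after the Hardy step, the assembled coefficient reads $[\lambda_j + a - b]^2$ with $a = 4^{-1}(n-2)^2$ and $b = 4^{-1}(\gamma-2)^2$, from which $\min_j$ produces $C_{n,\gamma}$.
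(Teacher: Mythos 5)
Your proposal is correct and follows essentially the same route as the paper: both start from Corollary~\ref{c3.9}, set $\tau=-1$, absorb the $\big|(-\Delta_{\bbS^{n-1}})^{1/2}\partial_r f\big|^2$ term via the power-weighted Hardy inequality in the radial variable, and then recognize the assembled mode-by-mode coefficient as the perfect square $\big[\lambda_j + 4^{-1}(n-2)^2 - 4^{-1}(\gamma-2)^2\big]^2$. The paper applies the Hardy inequality to the $L^2(\bbS^{n-1})$-valued radial map before invoking the spectral theorem, whereas you decompose into spherical harmonics first and apply the one-dimensional Hardy inequality to each coefficient $F_{j,\ell}$; these orderings are equivalent and yield the same algebraic cancellation.
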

\begin{proof}
First, applying \cite[Theorem 4.2]{GLMP22} as in \eqref{3.27}, one recalls that
\begin{align}
& \int_0^\infty\int_{{\bbS^{n-1}}}r^{\g-2} \bigg|\bigg((-\Delta_{\bbS^{n-1}})^{1/2}\frac{\partial f}{\partial r}\bigg)(r,\t)\bigg|^2\,  d^{n-1} \omega(\theta) \, r^{n-1}dr \no  \\
& \quad =\int_0^\infty r^{\g-2}\norm{\frac{d}{dr}\big(\big((-\Delta_{\bbS^{n-1}})^{1/2}f\big)(r,\dott)\big)}^2_{L^2({\bbS^{n-1}};d^{n-1}\omega)}\, r^{n-1}dr \lb{3.46} \\
& \qquad \geq 4^{-1}(\gamma+n-4)^{2}\int_0^\infty \int_{\bbS^{n-1}} r^{\g-4}\big|\big((-\Delta_{\bbS^{n-1}})^{1/2}f\big)(r,\t)\big|^2\, d^{n-1}\omega(\theta) \, r^{n-1}dr.  \no 
\end{align}
Therefore, for all $\tau \in (-\infty,0)$, \eqref{3.38} and \eqref{3.46} imply that
\begin{align} 
& \int_{\bbR^n} |x|^\g|(\Delta f)(x)|^2 \, d^n x 
\geq \bigg[\frac{(n-2)^2}{4}-\frac{(\g-2)^2}{4}\bigg]^2 \int_{\bbR^n} |x|^{\g-4} |f(x)|^2 \, d^n x    \no \\
&\quad+2^{-1}\big[(n-2)^2-(\g-2)^2 \big] \no \\
&\qquad \times \int_0^\infty\int_{{\bbS^{n-1}}}r^{\g-4}\big|\big((-\Delta_{\bbS^{n-1}})^{1/2}f\big)(r,\t)\big|^2\,  d^{n-1} \omega(\theta) \, r^{n-1}dr     \lb{3.47} \\
&\quad -\tau(\tau+2)\int_0^\infty\int_{{\bbS^{n-1}}}r^{\g-4}|(-\Delta_{\bbS^{n-1}} f)(r,\t)|^2\, d^{n-1} \omega(\theta) \, r^{n-1}dr.  \no 
\end{align}
Finally, choosing $\tau=-1$ in \eqref{3.47} and applying the spectral theorem to $-\Delta_{\bbS^{n-1}}$ yields
\begin{align}
& \int_{\bbR^n} |x|^\g|(\Delta f)(x)|^2 \, d^n x 
\geq \bigg[\frac{(n-2)^2}{4}-\frac{(\g-2)^2}{4}\bigg]^2 \int_{\bbR^n} |x|^{\g-4} |f(x)|^2 \, d^n x   \no \\
&\qquad+2^{-1}\big[(n-2)^2-(\g-2)^2 \big] \int_0^\infty\int_{{\bbS^{n-1}}}r^{\g-4}\big|\big((-\Delta_{\bbS^{n-1}})^{1/2}f\big)(r,\t)\big|^2\,  d^{n-1} \omega(\theta) \, r^{n-1}dr  \no  \\
&\qquad +\int_0^\infty\int_{{\bbS^{n-1}}}r^{\g-4}|(-\Delta_{\bbS^{n-1}} f)(r,\t)|^2\, d^{n-1} \omega(\theta) \, r^{n-1}dr  \no  \\
&\quad = \int_0^\infty r^{\g-4} \bigg\{\sum_{j\in\bbN_0}\sum_{\ell=0}^{m(\lambda_j)}\bigg(\bigg[\frac{(n-2)^2}{4}-\frac{(\g-2)^2}{4}\bigg]^2  \no  \\
&\hspace*{2.35cm}+2\bigg[\frac{(n-2)^2}{4}-\frac{(\g-2)^2}{4}\bigg]\lambda_j+\lambda_j^2\bigg)\big|F_{f,j,\ell}(r)\big|^2 \, r^{n-1}dr  \no \\
&\quad \geq \inf_{j\in\bbN_0}\Big\{\big[4^{-1}(n-2)^2-4^{-1}(\g-2)^2+\lambda_j\big]^2\Big\} 
 \int_0^\infty r^{\g-4}  \sum_{j\in\bbN_0}\sum_{\ell=0}^{m(\lambda_j)}\big|F_{f,j,\ell}(r)\big|^2 \, r^{n-1}dr \no  \\
&\quad =C_{n,\g} \int_{\bbR^n} |x|^{\g-4} |f(x)|^2 \, d^n x,   \lb{3.48}
\end{align}
with $C_{n,\g}$ given in \eqref{3.45}.
\end{proof}

\begin{remark}\lb{r3.12}
It has been shown in \cite[p.~148--149, Theorem~3.1]{CM12} that the constant $C_{n,\g}$ in \eqref{3.44}, \eqref{3.45} is sharp for all $\gamma\in \bbR$ and $n\in\bbN,\ n\geq2$. Remark \ref{r3.20a} provides another proof. For $\gamma>4-n$, the sharpness of $C_{n,\gamma}$ was shown in \cite[Theorem 6.5.2]{GM13} also. For all $\gamma\in \bbR$ and $n\in\bbN,\ n\geq2$, there are now two proofs of this optimal Rellich inequality: the original proof in \cite{CM12} and the proof by factorization given in Theorem \ref{t3.11} combined with Remark \ref{r3.20a}. (We also note that \cite[Theorem 3.5]{GMP24} derives inequality \eqref{3.44}, but sharpness of $C_{n,\gamma}$ was not demonstrated in \cite{GMP24}.) 
\hfill$\diamond$
\end{remark}

\begin{remark} \lb{r3.13}
As discussed in \cite{CM12}, whenever, $4^{-1} \big[(\gamma - 2)^2 - (n-2)^2\big]$ equals one of the eigenvalues of $- \Delta_{\bbS^{n-1}}$ (i.e., one of the numbers $j(j+n-2)$, $j \in \bbN_0$), then $C_{n,\gamma}$ vanishes, rendering inequality \eqref{3.44} trivial. In this context we recall the following result from \cite[Theorem~1.3]{GMP24}: 
\begin{align}
\begin{split} 
&\int_{B_n(0;R)} |x|^{\gamma} |(\Delta f)(x)|^{2} \, d^{n}x \geq C_{n,\gamma} \int_{B_n(0;R)}  
|x|^{\gamma-4} | f(x)|^{2} \, d^{n}x    \\
&\quad + \big\{\big[ (n-\gamma)^{2} + (n+\gamma-4)^{2} \big]\big/16\big\}     \lb{3.48a} \\
&\qquad \times \int_{B_n(0;R)} |x|^{\gamma-4}  \Bigg(\sum_{k=1}^{N} \prod_{p=1}^{k} [\ln_{p}(\eta/|x|)]^{-2} \Bigg)|f(x)|^{2} \, d^{n}x,  \\
& \, R \in (0,\infty), \; \gamma \in \bbR, \; N, n \in \bbN, \, n \geq 2, \; \eta \in [e_{N}R,\infty), \; 
f \in C_{0}^{\infty}(B_n(0;R)\bs\{0\}), 
\end{split} 
\end{align}
which yields an appropriate logarithmic refinement even if $C_{n,\gamma}$ vanishes. Here $B_n(0;R)$ denotes the ball in $\bbR^n$, $n \in \bbN$, $n \geq 2$, centered at the origin $0$ of radius $R \in (0,\infty)$, the iterated logarithms $\ln_k( \, \cdot \, )$, $k \in \bbN$, are given by
\begin{equation} 
\ln_1(\, \cdot \,) = \ln( \, \cdot \, ), \quad \ln_{k+1}( \, \cdot \,) = \ln \big( \ln_k(\, \cdot \,) \big), \quad k \in \bbN, 
\end{equation} 
and the iterated exponentials $e_j$, $j \in \bbN_0$, are introduced via 
\begin{equation}
e_0 = 0, \quad e_{j+1} = e^{e_j}, \quad j \in \bbN_{0}.
\end{equation} 
\hfill $\diamond$
\end{remark}

Motivated by the estimation given in \eqref{3.46}, one immediately arrives at the following inequality:

\begin{lemma}\lb{l3.13}
Let $\a,\b,\g\in\bbR$, $\tau \in (-\infty,0)$, and $f\in C_0^\infty(\bbR^n\backslash\{0\})$, $n\in\bbN$,  $n\geq2$. Then,
\begin{align} 
\begin{split} 
&\int_{\bbR^n}|x|^\g |(\Delta f)(x)|^2\, d^nx \geq [\a(\g+n-4)-2\b]\int_{\bbR^n}|x|^{\g-2}|(\nabla f)(x)|^2\, d^nx  \\
&\quad-\a(\a-4+2\g)\int_{\bbR^n}|x|^{\g-4}|x\cdot(\nabla f)(x)|^2\, d^nx   \\
&\quad+\b[(n-4)(\a-2)-\b+\g(n+\a+\g-6)]\int_{\bbR^n}|x|^{\g-4}|f(x)|^2 d^nx     \lb{3.49} \\
&\quad-\tau\big[2\b+(\g+n-4)\big(2^{-1}n-2^{-1}\g-\a\big)\big]   \\
&\qquad \times \int_0^\infty\int_{{\bbS^{n-1}}}r^{\g-4}\big|\big((-\Delta_{\bbS^{n-1}})^{1/2}f\big)(r,\t)\big|^2\,  d^{n-1} \omega(\theta) \, r^{n-1}dr   \\
&\quad -\tau(\tau+2)\int_0^\infty\int_{{\bbS^{n-1}}}r^{\g-4}|(\Delta_{\bbS^{n-1}} f)(r,\t)|^2\, d^{n-1} \omega(\theta) \, r^{n-1}dr.  
\end{split} 
\end{align}
\end{lemma}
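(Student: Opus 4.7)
The plan is to start from inequality \eqref{3.1} of Theorem \ref{t3.1} and absorb the term involving $\big|(-\Delta_{\bbS^{n-1}})^{1/2}\partial f/\partial r\big|^2$ into the term involving $\big|(-\Delta_{\bbS^{n-1}})^{1/2}f\big|^2$ by means of a one-dimensional weighted Hardy-type inequality. Since $\tau\in(-\infty,0)$, the prefactor $-2\tau>0$, so lower-bounding that integrand produces a valid chain of inequalities.

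First, I would invoke Lemma \ref{l3.3} to rewrite
\begin{equation*}
\int_0^\infty\int_{{\bbS^{n-1}}}r^{\g-2}\bigg|\bigg((-\Delta_{\bbS^{n-1}})^{1/2}\frac{\partial f}{\partial r}\bigg)(r,\t)\bigg|^2\,  d^{n-1} \omega(\theta) \, r^{n-1}dr
=\int_0^\infty r^{\g-2}\bigg\|\frac{d}{dr}\big((-\Delta_{\bbS^{n-1}})^{1/2}f(r,\dott)\big)\bigg\|^2_{L^2({\bbS^{n-1}};d^{n-1}\omega)}\, r^{n-1}dr,
\end{equation*}
exactly as in \eqref{3.46}. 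Next, applying the power-weighted one-dimensional Hardy inequality \cite[Theorem~4.2]{GLMP22} (used already in \eqref{3.27} and \eqref{3.46}) to the $L^2({\bbS^{n-1}};d^{n-1}\omega)$-valued function $r\mapsto \big((-\Delta_{\bbS^{n-1}})^{1/2}f\big)(r,\dott)$ yields the lower bound $4^{-1}(\g+n-4)^2$ times the integral $\int_0^\infty\int_{{\bbS^{n-1}}}r^{\g-4}\big|\big((-\Delta_{\bbS^{n-1}})^{1/2}f\big)(r,\t)\big|^2\,d^{n-1}\omega(\theta)\,r^{n-1}dr$. Multiplying by $-2\tau>0$ preserves the inequality.

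Substituting into \eqref{3.1} adds $-\tau(\g+n-4)^2/2$ to the coefficient of the $\big|(-\Delta_{\bbS^{n-1}})^{1/2}f\big|^2$ integral, so that the combined coefficient becomes
\begin{equation*}
-\tau\Big[(\g+n-4)(2-\a-\g)+2\b+2^{-1}(\g+n-4)^2\Big]
=-\tau\Big[2\b+(\g+n-4)\big(2^{-1}n-2^{-1}\g-\a\big)\Big],
\end{equation*}
upon noting that $(2-\a-\g)+2^{-1}(\g+n-4)=2^{-1}n-2^{-1}\g-\a$. All other terms in \eqref{3.1} remain untouched, yielding \eqref{3.49}.

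This is a short deduction rather than a genuine new calculation; the only bookkeeping subtlety is verifying that the sign $\tau<0$ correctly orients each inequality (so that lower-bounding the positive quantity $\big\|(d/dr)((-\Delta_{\bbS^{n-1}})^{1/2}f)\big\|^2$ yields a valid lower bound on the whole right-hand side after multiplication by $-2\tau>0$), and that the resulting algebraic simplification of the spherical coefficient matches the expression in \eqref{3.49}. Beyond these routine checks, no additional obstacle arises, since Lemma \ref{l3.3} and the one-dimensional Hardy inequality from \cite{GLMP22} are already established.
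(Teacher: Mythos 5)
Your proposal is correct and follows essentially the same route as the paper: the paper's proof (after reducing to real-valued $f$ via Remark~\ref{r1.1}) applies \eqref{3.46} -- which is precisely the combination of Lemma~\ref{l3.3} and the one-dimensional power-weighted Hardy inequality from \cite[Theorem~4.2]{GLMP22} that you spell out -- to inequality \eqref{3.1}, uses $\tau<0$ to orient the estimate, and then verifies the same algebraic identity $-\tau[(\g+n-4)(2-\a-\g)+2\b]-2\tau\cdot 4^{-1}(\g+n-4)^2=-\tau\big[2\b+(\g+n-4)\big(2^{-1}n-2^{-1}\g-\a\big)\big]$. The only stylistic difference is that you unpack \eqref{3.46} into its two constituent steps rather than cite it directly.
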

\begin{proof}
Without loss of generality (cf.\ Remark \ref{r1.1}) we can assume that $f \in C^{\infty}_0(\bbR^n \backslash \{0\})$ is real-valued. Applying \eqref{3.46} to \eqref{3.1} and noting that
\begin{equation}
-\tau[(\g+n-4)(2-\a-\g)+2\b]-2\tau\big[4^{-1}(\g+n-4)^2\big]=-\tau\big[2\b+(\g+n-4)\big(2^{-1}n-2^{-1}\g-\a\big)\big]
\end{equation}
yields \eqref{3.49}.
\end{proof}

Choosing $\tau\in(-2,0)$ as previously mentioned in Remark \ref{r3.2} allows one to estimate the right-hand side of \eqref{3.49} further as follows.

\begin{lemma}\lb{l3.14}
Let $\a,\b,\g\in\bbR$, $\tau\in(-2,0)$, and $f\in C_0^\infty(\bbR^n\backslash\{0\})$, $n\in\bbN$,  $n\geq2$. Then,
\begin{align}
& \int_{\bbR^n}|x|^\g |(\Delta f)(x)|^2\, d^nx \geq [\a(\g+n-4)-2\b]\int_{\bbR^n}|x|^{\g-2}|(\nabla f)(x)|^2\, d^nx \no \\
&\quad-\a(\a-4+2\g)\int_{\bbR^n}|x|^{\g-4}|x\cdot(\nabla f)(x)|^2\, d^nx    \no \\
&\quad+\b[(n-4)(\a-2)-\b+\g(n+\a+\g-6)]\int_{\bbR^n}|x|^{\g-4}|f(x)|^2 d^nx    \lb{3.51} \\
&\quad-\tau\big[2\b+(\g+n-4)\big(2^{-1}n-2^{-1}\g-\a\big)+(\tau+2)(n-1)\big]  \no \\
&\qquad \times \int_0^\infty\int_{{\bbS^{n-1}}}r^{\g-4}\big|\big((-\Delta_{\bbS^{n-1}})^{1/2}f\big)(r,\t)\big|^2\,  d^{n-1} \omega(\theta) \, r^{n-1}dr.  \no 
\end{align}
\end{lemma}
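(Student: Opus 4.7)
The plan is to deduce inequality \eqref{3.51} directly from inequality \eqref{3.49} of Lemma \ref{l3.13} by absorbing the $|\Delta_{\bbS^{n-1}} f|^2$ term into the coefficient of the lower-order spherical term $|(-\Delta_{\bbS^{n-1}})^{1/2} f|^2$. The restriction $\tau \in (-2,0)$ is precisely the range in which $-\tau(\tau+2) > 0$, which is exactly what is needed to preserve the direction of the inequality when one estimates $|\Delta_{\bbS^{n-1}} f|^2$ from below.

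First I would establish the operator inequality $(-\Delta_{\bbS^{n-1}})^2 \geq (n-1)(-\Delta_{\bbS^{n-1}})$ on $L^2(\bbS^{n-1}; d^{n-1}\omega)$. Since the eigenvalues of $-\Delta_{\bbS^{n-1}}$ are $\lambda_j = j(j+n-2)$, one has $\lambda_0 = 0$ and $\lambda_j \geq n-1$ for every $j \in \bbN$, so $\lambda_j^2 \geq (n-1)\lambda_j$ for all $j \in \bbN_0$. Expanding $f(r,\cdot)$ in the orthonormal basis $\{\varphi_{j,\ell}\}$ of spherical harmonics and applying Parseval's identity pointwise in $r$ yields
\begin{equation*}
\int_{\bbS^{n-1}} |(\Delta_{\bbS^{n-1}} f)(r,\theta)|^2 \, d^{n-1}\omega(\theta) \geq (n-1) \int_{\bbS^{n-1}} \big|\big((-\Delta_{\bbS^{n-1}})^{1/2} f\big)(r,\theta)\big|^2 \, d^{n-1}\omega(\theta).
\end{equation*}
Multiplying by $r^{\g+n-5}$ and integrating over $r \in (0,\infty)$ promotes this to the corresponding bound between the two weighted spherical integrals appearing in \eqref{3.49} and \eqref{3.51}.

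Since $-\tau(\tau+2) > 0$ for $\tau \in (-2,0)$, substituting this bound into the final term of \eqref{3.49} preserves the ``$\geq$'' direction of the overall estimate. Collecting the coefficients of the $|(-\Delta_{\bbS^{n-1}})^{1/2} f|^2$ integral then gives
\begin{align*}
& -\tau\big[2\b+(\g+n-4)\big(2^{-1}n-2^{-1}\g-\a\big)\big] - \tau(\tau+2)(n-1) \\
& \quad = -\tau\big[2\b+(\g+n-4)\big(2^{-1}n-2^{-1}\g-\a\big) + (\tau+2)(n-1)\big],
\end{align*}
which matches the coefficient on the right-hand side of \eqref{3.51} exactly.

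No serious obstacle is anticipated: the argument rests only on the elementary sign check $-\tau(\tau+2) > 0$ for $\tau \in (-2,0)$ and the spectral bound $\lambda_j^2 \geq (n-1)\lambda_j$. By Remark \ref{r1.1} we may assume $f$ is real-valued, and the spherical harmonic expansion is justified by the standard facts recalled earlier in this section.
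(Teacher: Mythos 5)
Your proof is correct and is essentially the paper's own argument: both rest on the spectral bound $\lambda_j^2 \geq (n-1)\lambda_j$ for the Laplace--Beltrami eigenvalues (i.e., $(-\Delta_{\bbS^{n-1}})^2 \geq (n-1)(-\Delta_{\bbS^{n-1}})$), the sign observation that $-\tau(\tau+2) > 0$ precisely for $\tau \in (-2,0)$, and then substituting the resulting estimate into \eqref{3.49} and regrouping the coefficients. No meaningful differences.
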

\begin{proof}
Without loss of generality (cf.\ Remark \ref{r1.1}) we can assume that $f \in C^{\infty}_0(\bbR^n \backslash \{0\})$ is real-valued. One notes that by the spectral theorem for $-\Delta_{\bbS^{n-1}}$ one has
\begin{align} 
& \int_0^\infty\int_{{\bbS^{n-1}}}r^{\g-4} |(\Delta_{\bbS^{n-1}} f)(r,\t)|^2\, d^{n-1} \omega(\theta) \, r^{n-1}dr    \no \\
&\quad =\sum_{j\in\bbN_0}\sum_{\ell=1}^{m(\lambda_j)}\lambda_j^2\int_0^\infty r^{\g-4} 
|F_{f,j,\ell}(r)|^2\, r^{n-1}dr  \no   \\
&\quad \geq (n-1)\sum_{j\in\bbN_0}\sum_{\ell=1}^{m(\lambda_j)}\lambda_j\int_0^\infty r^{\g-4}
|F_{f,j,\ell}(r)|^2\, r^{n-1}dr  \no   \\
&\quad =(n-1)\int_0^\infty\int_{{\bbS^{n-1}}}r^{\g-4}\big|\big((-\Delta_{\bbS^{n-1}})^{1/2}f\big)(r,\t)\big|^2\,  d^{n-1} \omega(\theta) \, r^{n-1}dr,    \lb{3.52}
\end{align}
from which substituting \eqref{3.52} into \eqref{3.49} for $\tau\in(-2,0)$ yields \eqref{3.51}.
\end{proof}

We end this section with a further extension of Schmincke's inequality (see Theorem \ref{t2.7} and \cite{Sc72}) in this setting:
\begin{theorem}\lb{t3.19}
Let $\g\in\bbR$ and $f\in C_0^\infty(\bbR^n\backslash\{0\})$, $n\in\bbN$, $n\geq2$. Consider the inequality
\begin{align}
\begin{split} 
\int_{\bbR^n} |x|^\g|(\Delta f)(x)|^2 \, d^n x 
& \geq - s \int_{\bbR^n} |x|^{\g-2} |(\nabla f)(x)|^2 \, d^n x     \\
& \quad + [(\g+n - 4)/4]^2 \big[(\g-n)^2+4s\big] \int_{\bbR^n} |x|^{\g-4} |f(x)|^2 \, d^n x.   \lb{3.89}
\end{split} 
\end{align}
Then, the following assertions $(i)$ and $(ii)$ hold:
\begin{align} 
\begin{split} 
& \text{$(i)$ If } \, \gamma \in \big[2-(n-1)^{1/2}, 2+(n-1)^{1/2}\big], \, \text{ inequality \eqref{3.89} holds} \\
& \quad \text{for $s \in \big[-2^{-1}\big\{(n-2)^2+(\g-2)^2\big\}, \infty\big)$,}    \lb{3.90}  
\end{split} \\
\begin{split} 
& \text{$(ii)$ If } \, \gamma \in \bbR \big\backslash \big[2-(n-1)^{1/2}, 2+(n-1)^{1/2}\big], \, \text{ inequality  \eqref{3.89} holds} \\
& \quad \text{for $s \in \big[-2^{-1}\big\{(n-2)^2-(\g-2)^2\big\}+1-n, \infty\big)$.}     \lb{3.91}
\end{split} 
\end{align} 
\end{theorem}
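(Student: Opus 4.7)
The plan is to apply Theorem~\ref{t3.1} with the same Schmincke-type substitution $\beta = 2^{-1}(\gamma+n-4)\big[\alpha + \gamma - 2 - 2^{-1}(\gamma+n-4)\big]$ used to derive Theorem~\ref{t2.7}, together with the distinguished choice $\tau = -1$, and then to merge the three remaining spherical integrals in \eqref{3.1} into a single nonnegative remainder by combining a Hardy-type estimate on $(0,\infty)$ with the spectral gap of $-\Delta_{\bbS^{n-1}}$.

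First I would use the polar-coordinate identity
\begin{equation*}
\int_{\bbR^n} |x|^{\gamma-4} |x\cdot(\nabla f)(x)|^2 \, d^n x = \int_{\bbR^n} |x|^{\gamma-2} |(\nabla f)(x)|^2 \, d^n x - \int_0^\infty \int_{\bbS^{n-1}} r^{\gamma-4} \big|\big((-\Delta_{\bbS^{n-1}})^{1/2}f\big)(r,\theta)\big|^2 \, d^{n-1}\omega(\theta) \, r^{n-1} dr,
\end{equation*}
which is simply $|\nabla f|^2 = |\partial f/\partial r|^2 + r^{-2}|\nabla_{\bbS^{n-1}}f|^2$ after integration, to eliminate $\int_{\bbR^n} |x|^{\gamma-4}|x\cdot(\nabla f)|^2$ from \eqref{3.1}. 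This brings the coefficient of $\int|x|^{\gamma-2}|\nabla f|^2$ to $\alpha(n-\alpha-\gamma)-2\beta$ and shifts the coefficient of the first spherical integral to $\alpha(\alpha-4+2\gamma) - \tau[(\gamma+n-4)(2-\alpha-\gamma) + 2\beta]$. With Schmincke's $\beta$ a short computation yields $(\gamma+n-4)(2-\alpha-\gamma)+2\beta = -(\gamma+n-4)^2/2$, and as in the proof of Theorem~\ref{t2.7} the non-spherical portion collapses to exactly the right-hand side of \eqref{3.89} with $s = \alpha^2 + 2(\gamma-2)\alpha + 2^{-1}[(\gamma-2)^2-(n-2)^2]$.

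Next, setting $\tau = -1$, I would estimate the three spherical tails against the common integrand $r^{\gamma-4}|(-\Delta_{\bbS^{n-1}})^{1/2}f|^2$: the radial-derivative term $-2\tau \int_0^\infty \int_{\bbS^{n-1}} r^{\gamma-2}|(-\Delta_{\bbS^{n-1}})^{1/2}(\partial f/\partial r)|^2$ is bounded below by $(\gamma+n-4)^2/2$ times that integral, via the Hardy-type inequality \cite[Theorem~4.2]{GLMP22} applied to $r\mapsto((-\Delta_{\bbS^{n-1}})^{1/2}f)(r,\cdot)$, and this precisely cancels the $\tau(\gamma+n-4)^2/2$ contribution sitting in the first spherical coefficient; the $-\tau(\tau+2) \int_0^\infty \int_{\bbS^{n-1}} r^{\gamma-4}|\Delta_{\bbS^{n-1}}f|^2$ term contributes $(n-1)$ times the same integral via $\sigma(-\Delta_{\bbS^{n-1}}) \subseteq \{0\}\cup[n-1,\infty)$ and the spectral theorem. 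The net spherical contribution is therefore bounded below by $[\alpha(\alpha-4+2\gamma)+(n-1)]\int_0^\infty \int_{\bbS^{n-1}} r^{\gamma-4}|(-\Delta_{\bbS^{n-1}})^{1/2}f|^2$, which may be dropped whenever $\alpha(\alpha-4+2\gamma) \geq -(n-1)$.

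Translating the constraint via $\alpha(\alpha-4+2\gamma) = s - 2^{-1}[(\gamma-2)^2-(n-2)^2]$ produces $s \geq -2^{-1}\{(n-2)^2-(\gamma-2)^2\} + 1 - n$, and the two cases of the theorem split according to the range $[-(\gamma-2)^2,\infty)$ of the quadratic $\alpha\mapsto\alpha(\alpha-4+2\gamma)$ over $\alpha \in \bbR$: when $(\gamma-2)^2 > n-1$ the constraint is active and its boundary is attained at the roots of $\alpha(\alpha-4+2\gamma) = -(n-1)$, producing assertion~(ii); when $(\gamma-2)^2 \leq n-1$ the constraint is automatic, so one takes $\alpha = 2-\gamma$ to achieve the algebraic minimum $-(\gamma-2)^2$ and obtain the stronger range $s \geq -2^{-1}\{(n-2)^2+(\gamma-2)^2\}$ of assertion~(i). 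I expect the main delicate point to be the exact cancellation at $\tau=-1$ between the Hardy contribution from the radial-derivative spherical integral and the $\tau$-dependent part of the first spherical coefficient; once this bookkeeping is verified, the two cases follow from elementary optimization of a quadratic in $\alpha$.
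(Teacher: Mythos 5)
Your proposal is correct and is essentially the paper's proof: the paper starts from the pre-packaged inequality \eqref{3.51} of Lemma~\ref{l3.14} (which already bundles the Hardy-type estimate \eqref{3.46} and the spectral gap estimate \eqref{3.52}) and then applies \eqref{3.41}, whereas you unbundle those two estimates and apply them directly inside the proof, arriving at the same intermediate inequality \eqref{3.92} with spherical coefficient $\alpha^2+2(\gamma-2)\alpha+(n-1)$. The Schmincke choice of $\beta$, the choice $\tau=-1$, the change of variable to $s=s_{n,\gamma}(\alpha)$, and the case split on the sign of the discriminant $(2\gamma-4)^2-4(n-1)$ all coincide with the paper's argument.
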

\begin{proof}
Choosing $\beta = 2^{-1}(\g+n-4)\big[\alpha+\g -2 - 2^{-1}(\g+n-4)\big]$ and $\tau=-1$ in \eqref{3.51}, and applying \eqref{3.41}, one obtains
\begin{align}\lb{3.92}
& \int_{\bbR^n}|x|^\g |(\Delta f)(x)|^2\, d^nx \geq \big[2^{-1}(n-\g)(\g+n-4)-\a(\a-4+2\g)\big] \no  \\
&\hspace*{3.9cm} \times \int_{\bbR^n}|x|^{\g-2}|(\nabla f)(x)|^2\, d^nx \no \\
&\quad+4^{-1}(\g+n-4)^2\big[(\a+\g-2)^2-4^{-1}(\g+n-4)^2\big] \int_{\bbR^n}|x|^{\g-4}|f(x)|^2 d^nx \\
&\quad+\big[\a^2+2(\g-2)\a+(n-1)\big]  \no  \\
&\qquad \times \int_0^\infty\int_{{\bbS^{n-1}}}r^{\g-4}\big|\big((-\Delta_{\bbS^{n-1}})^{1/2}f\big)(r,\t)\big|^2\,  d^{n-1} \omega(\theta) \, r^{n-1}dr.  \no 
\end{align}
Hence, if
\begin{equation}\lb{3.93}
0\leq \a^2+2(\g-2)\a+(n-1),
\end{equation}
one can further write
\begin{align}\lb{3.94}
\begin{split} 
& \int_{\bbR^n}|x|^\g |(\Delta f)(x)|^2\, d^nx \geq -s\int_{\bbR^n}|x|^{\g-2}|(\nabla f)(x)|^2\, d^nx \\
&\quad+16^{-1}(\g+n-4)^2\big[(\g-n)^2+4s\big] \int_{\bbR^n}|x|^{\g-4}|f(x)|^2 d^nx,   
\end{split} 
\end{align}
where we have introduced the new variable
\begin{align}\lb{3.95}
\begin{split}
s = s_{n,\g}(\alpha) &=-\big[2^{-1}(n-\g)(\g+n-4)-\a(\a-4+2\g)\big] \\
&=\alpha^2+2(\g-2)\a+ 2^{-1}\big[(\g-2)^2-(n-2)^2\big].
\end{split}
\end{align}
If the discriminant of the right-hand side of the constraint in \eqref{3.93}, $(2\g-4)^2-4(n-1)$, is nonpositive, then \eqref{3.94} holds for all $\a\in\bbR$. That is, if $\g$ is in the range given in \eqref{3.90}, then \eqref{3.89} holds for
\begin{equation}
s\in\big[-2^{-1}\big\{(n-2)^2+(\g-2)^2\big\}, \infty\big),
\end{equation}
by minimizing $s=s_{n,\g}(\a)$ with respect to $\a$, proving item $(i)$.

If, on the other hand, we now suppose $\g$ is in the range given in \eqref{3.91}, that is,
\begin{equation}
\g<2-(n-1)^{1/2}, \; \text{ or, } \; 2+(n-1)^{1/2}<\g,
\end{equation}
then the discriminant of the right-hand side of the constraint in \eqref{3.93} is nonnegative, so the constraint will hold only if $\a$ satisfies
\begin{equation}\lb{3.98}
\a\leq2-\g-\big(\g^2-4\g-n+5\big)^{1/2}, \; \text{ or, } \;
2-\g+\big(\g^2-4\g-n+5\big)^{1/2}\leq\a.
\end{equation}
Hence, \eqref{3.89} holds for all $s_{n,\g}(\a)$ such that $\a$ satisfies \eqref{3.98}, in particular, substituting the constraints \eqref{3.98} into \eqref{3.95} yields
\begin{equation}
s \in \big[-2^{-1}\big\{(n-2)^2-(\g-2)^2\big\}+1-n, \infty\big),
\end{equation}
proving item $(ii)$.
\end{proof}

\begin{remark}\lb{r3.20}
We conclude with a few remarks relating the extension of Schmincke's inequality proven in Theorem \ref{t3.19} to Corollary \ref{c3.18}.\\[1mm]
$(i)$ Letting $\g=0$, $n\geq 5$, $n\in\bbN,$ and $s=-n^2/4$ in Theorem \ref{t3.19}\,$(i)$ recovers Corollary \ref{c3.18} for $n\geq5$. \\[1mm]
$(ii)$ Letting $\g=0$, $n=4,$ and $s=-3$ in Theorem \ref{t3.19}\,$(ii)$ recovers Corollary \ref{c3.18} for $n=4$. \\[1mm]
$(iii)$ In order to recover the $n=3$ case of Corollary \ref{c3.18}, one would need to be able to choose $s=-25/36,$ however, this choice is not permitted in either case of the theorem when $\g=0$ and $n=3$. In fact, only item $(ii)$ can be applied which yields the range $s\in[-1/2,\infty)$.
\hfill $\diamond$ 
\end{remark}

Returning to Remark \ref{r3.2} once more, allowing $\tau\in(-2,0)$ (rather than a fixed $\tau=-1$ as in the proof of Theorem \ref{t3.19}) allows one to prove an improved Schmincke-type inequality. We illustrate this idea here by focusing on $\g=0$ and $n=3$ in order to recover the remaining case of $n=3$ in Corollary \ref{c3.18}.

\begin{lemma}\lb{l3.21}
Let $f\in C_0^\infty\big(\bbR^3\backslash\{0\}\big)$. Then
\begin{align}
\int_{\bbR^3} |(\Delta f)(x)|^2 \, d^3 x 
\geq - s \int_{\bbR^3} |x|^{-2} |(\nabla f)(x)|^2 \, d^3 x + 16^{-1}[4s & +(25/9)] \int_{\bbR^3} |x|^{-4} |f(x)|^2 \, d^3 x,   \no \\
&\hspace{1cm} s\in[-25/36,\infty). \lb{3.100}
\end{align}
\end{lemma}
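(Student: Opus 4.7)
The plan is to handle the critical value $s=-25/36$ first, then extend to $s>-25/36$ via the weighted Hardy inequality \eqref{2.35}. At $s=-25/36$ the coefficient $(4s+25/9)/16$ of $\int_{\bbR^3}|x|^{-4}|f|^{2}\,d^{3}x$ vanishes, so the target reduces to the Hardy--Rellich bound $\int_{\bbR^3}|(\Delta f)(x)|^{2}\,d^{3}x\geq (25/36)\int_{\bbR^3}|x|^{-2}|(\nabla f)(x)|^{2}\,d^{3}x$, which is the remaining case of Corollary \ref{c3.18} alluded to in Remark \ref{r3.20}\,$(iii)$.

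For the critical case, I would apply Lemma \ref{l3.14} with $n=3$, $\gamma=0$, $\beta=0$, and with $\alpha\in\bbR$, $\tau\in(-2,0)$ as free parameters. The choice $\beta=0$ kills the $\int_{\bbR^3}|x|^{-4}|f|^{2}\,d^{3}x$ term on the right-hand side of \eqref{3.51}. Using the polar identities $\int_{\bbR^3}|x|^{-4}|x\cdot\nabla f|^{2}\,d^{3}x=\int_{\bbR^3}|x|^{-2}|\partial f/\partial r|^{2}\,d^{3}x$ and $\int_{\bbR^3}|x|^{-2}|\nabla f|^{2}\,d^{3}x=\int_{\bbR^3}|x|^{-2}|\partial f/\partial r|^{2}\,d^{3}x+\int_{\bbR^3}|x|^{-4}|(-\Delta_{\bbS^{2}})^{1/2}f|^{2}\,d^{3}x$ (the latter after integration by parts on $\bbS^{2}$), the remaining right-hand side separates into a radial contribution with coefficient $3\alpha-\alpha^{2}$ and a spherical contribution with coefficient $-\alpha-\tau(\alpha+2\tau+5/2)$. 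To dominate $(25/36)\int_{\bbR^3}|x|^{-2}|\nabla f|^{2}\,d^{3}x$ it then suffices to force both coefficients to be $\geq 25/36$.

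The radial constraint $3\alpha-\alpha^{2}\geq 25/36$ is equivalent to $\alpha^{2}-3\alpha+25/36\leq 0$, which pins $\alpha\in[3/2-\sqrt{14}/3,\,3/2+\sqrt{14}/3]$. Viewing $-\alpha-\tau(\alpha+2\tau+5/2)=-2\tau^{2}-(\alpha+5/2)\tau-\alpha$ as a concave-down quadratic in $\tau$, its maximum $(\alpha+5/2)^{2}/8-\alpha$, attained at $\tau_{*}=-(\alpha+5/2)/4$, is at least $25/36$ precisely when $\alpha^{2}-3\alpha+25/36\geq 0$. The two constraints thus involve the \emph{same} quadratic with opposite inequality signs, and can hold simultaneously only at the two endpoints $\alpha=3/2\pm\sqrt{14}/3$; at each of them one checks that $\tau_{*}=-1\pm\sqrt{14}/12\in(-2,0)$, so Lemma \ref{l3.14} applies. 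Either choice establishes \eqref{3.100} at $s=-25/36$. The main technical obstacle is this double-sided matching: the two constraints saturate simultaneously only at isolated values of $\alpha$, and that is precisely what forces the sharp constant $25/36$ to emerge.

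For $s>-25/36$, I write $s=-25/36+\delta$ with $\delta>0$ and combine the critical case with \eqref{2.35} specialized to $n=3$, $\gamma=0$ (which gives $\int_{\bbR^3}|x|^{-2}|\nabla f|^{2}\,d^{3}x\geq(1/4)\int_{\bbR^3}|x|^{-4}|f|^{2}\,d^{3}x$), obtaining
\begin{align*}
\int_{\bbR^3}|(\Delta f)(x)|^{2}\,d^{3}x &\geq (25/36)\int_{\bbR^3}|x|^{-2}|(\nabla f)(x)|^{2}\,d^{3}x \\
&= -s\int_{\bbR^3}|x|^{-2}|(\nabla f)(x)|^{2}\,d^{3}x + \delta\int_{\bbR^3}|x|^{-2}|(\nabla f)(x)|^{2}\,d^{3}x \\
&\geq -s\int_{\bbR^3}|x|^{-2}|(\nabla f)(x)|^{2}\,d^{3}x + (\delta/4)\int_{\bbR^3}|x|^{-4}|f(x)|^{2}\,d^{3}x.
\end{align*}
The arithmetic identity $\delta/4=(s+25/36)/4=(4s+25/9)/16$ then delivers \eqref{3.100} in the full range $s\in[-25/36,\infty)$.
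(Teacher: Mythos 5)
Your proof is correct and takes a genuinely different route from the paper's. The paper proves \eqref{3.100} directly for the full range $s\in[-25/36,\infty)$ inside a single application of \eqref{3.51}: it keeps $\beta$ nonzero (writing $\beta = A\alpha + B$), sets $\tau = -(1+\varepsilon)$, and then performs a rather heavy three-parameter matching in $(A,B,\varepsilon)$ (ultimately $A = -1/2$, $B = 3/4 \pm (1/3)\sqrt{7/2}$, $\varepsilon = \pm(1/6)\sqrt{7/2}$) designed to make the two threshold values $\alpha_\pm$ and $\hat\alpha_\pm$ coincide, so that $s(\hat\alpha_+) = -25/36$ is attained. You instead decouple the problem: you set $\beta = 0$ in Lemma \ref{l3.14}, reduce the $s=-25/36$ case to a two-parameter optimization over $(\alpha,\tau)$ via the polar decomposition, and observe that the radial and spherical constraints are governed by the \emph{same} quadratic $\alpha^2 - 3\alpha + 25/36$ with opposite signs, forcing $\alpha$ to the two roots $3/2 \pm \sqrt{14}/3$; you then pass from the endpoint to general $s$ using the weighted Hardy inequality \eqref{2.35}. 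Both strategies yield the identical constant $(4s+25/9)/16$; your modular two-step argument is considerably lighter algebraically and makes the role of the sharp constant $25/36$ transparent, whereas the paper's single-shot argument avoids importing the Hardy inequality as a separate tool.

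Two small slips worth fixing. First, the Hardy inequality you invoke, $\int_{\bbR^3}|x|^{-2}|\nabla f|^2\,d^3x \geq (1/4)\int_{\bbR^3}|x|^{-4}|f|^2\,d^3x$, is \eqref{2.35} specialized to $n=3$, $\gamma=-2$ (giving $[(n-2+\gamma)/2]^2 = 1/4$), not $\gamma=0$. Second, the pairing between $\alpha$ and $\tau_*$ has the signs crossed: $\alpha = 3/2 + \sqrt{14}/3$ gives $\tau_* = -(\alpha+5/2)/4 = -1 - \sqrt{14}/12$, and $\alpha = 3/2 - \sqrt{14}/3$ gives $\tau_* = -1 + \sqrt{14}/12$; since $\sqrt{14}/12 \approx 0.31$ both lie in $(-2,0)$, so the conclusion is unaffected, but the stated correspondence $\tau_* = -1 \pm \sqrt{14}/12$ for $\alpha = 3/2 \pm \sqrt{14}/3$ should read $\mp$.
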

\begin{proof}
Letting $\a\in\bbR$, $\g=0$, $n=3$, and $\tau=-(1+\varepsilon)$ with $\varepsilon\in(-1,1)$ in \eqref{3.51}, applying \eqref{3.41} once again, yields
\begin{align}\lb{3.101}
\begin{split} 
& \int_{\bbR^3} |(\Delta f)(x)|^2\, d^3 x \geq -\big(\a^2-3\a+2\b\big)\int_{\bbR^3}|x|^{-2}|(\nabla f)(x)|^2\, d^3 x  \\
&\quad+\b(2-\a-\b)\int_{\bbR^3 }|x|^{-4}|f(x)|^2\, d^3 x \\
&\quad+\big[\a^2+(\varepsilon-3)\a+2(1+\varepsilon)\beta-(3/2)(1+\varepsilon)+2-2\varepsilon^2\big]    \\
&\qquad \times \int_0^\infty\int_{\mathbb{S}^2}r^{-4}\big|\big((-\Delta_{\mathbb{S}^2})^{1/2}f\big)(r,\t)\big|^2\,  d^{2} \omega(\theta) \, r^{2}dr.  
\end{split} 
\end{align}
Next, we let $\b=A\a+B,\ A,B\in\bbR$ so that subject to the constraint
\begin{equation}\lb{3.102}
0\leq\big\{\a^2+[(2A+1)(1+\varepsilon)-4]\a+[2B-(3/2)](1+\varepsilon)+2-2\varepsilon^2\big\} ,
\end{equation}
\eqref{3.101} implies
\begin{align}\lb{3.103} 
\begin{split} 
\int_{\bbR^3} |(\Delta f)(x)|^2\, d^3 x&\geq -\big[\a^2+(2A-3)\a+2B\big]\int_{\bbR^3}|x|^{-2}|(\nabla f)(x)|^2\, d^3 x      \\
&\quad+(A\a+B)[2-B-(1+A)\a]\int_{\bbR^3}|x|^{-4}|f(x)|^2 d^3 x. 
\end{split} 
\end{align}
Next, we assume that $A,\ B,$ and $\varepsilon$ have been chosen such the discriminant of the right-hand side of \eqref{3.102} satisfies
\begin{equation}\lb{3.104}
0\leq (2A+1)^2(1+\varepsilon)^2-8(2A+1)(1+\varepsilon)-(8B-6)(1+\varepsilon)+8\varepsilon^2+8.
\end{equation}
Then the constraint \eqref{3.102} is equivalent to $\a\leq\a_-$ or $\a_+\leq\a$ where
\begin{equation}\lb{3.105}
\a_\pm=2^{-1}\big\{4-(2A+1)(1+\varepsilon)\pm\big[(2A+1)^2(1+\varepsilon)^2-2(8A+4B+1)(1+\varepsilon)+8\varepsilon^2+8\big]^{1/2}\big\}.
\end{equation}

Motivated by Corollary \ref{c3.18}, one wants to choose $A$ and $B$ in \eqref{3.103} such that
\begin{equation}
-\big[\a^2+(2A-3)\a+2B\big]=25/36,
\end{equation}
that is,
\begin{equation}\lb{3.107}
\a=\hat\a_\pm=2^{-1}\big\{3-2A\pm\big((2A-3)^2-8B-(25/9)\big)^{1/2}\big\}.
\end{equation}
Thus, one looks for $A,\ B,$ and $\varepsilon$ such that $\a_\pm=\hat\a_\pm$, that is,
\begin{align}
\begin{split}\lb{3.108}
& 4-(2A+1)(1+\varepsilon)\pm\big[(2A+1)^2(1+\varepsilon)^2-2(8A+4B+1)(1+\varepsilon)+8\varepsilon^2+8\big]^{1/2} \\
& \quad =3-2A\pm\big((2A-3)^2-8B-(25/9)\big)^{1/2}. 
\end{split}
\end{align}
Given \eqref{3.108}, one chooses $A=-1/2$ and studies the equation (after rearranging)
\begin{equation}\lb{3.109}
8\varepsilon^2-(8B-6)\varepsilon+(7/9)=0.
\end{equation}
Hence one wants
\begin{equation}\lb{3.110}
\varepsilon=\varepsilon_\pm=16^{-1}\big[8B-6\pm\big(64B^2-96B+(100/9)\big)^{1/2}\big],
\end{equation}
from which one concludes that $B$ must be chosen such that $B\leq B_-$ or $B_+\leq B$, where
\begin{equation}\lb{3.111}
B_\pm=128^{-1}\big[96\pm\big(96^2-(25600/9)\big)^{1/2}\big]=(3/4)\pm(1/3)(7/2)^{1/2}.
\end{equation}
Choosing $B=B_\pm$ in \eqref{3.110} yields
\begin{equation}\lb{3.112}
\varepsilon=\varepsilon_\pm=\pm(1/6)(7/2)^{1/2}.
\end{equation}
Finally, with $A=-1/2,$ $B=(3/4)\pm(1/3)(7/2)^{1/2}$, and $\varepsilon=\pm(1/6)(7/2)^{1/2}$, one can check that \eqref{3.104} is satisfied. By \eqref{3.107}, one has
\begin{align}
\hat\a_\pm=2\pm2^{-1}\big[(65/9)-(8/3)(7/2)^{1/2}\big]^{1/2},
\end{align}
and writing
\begin{equation}
s=s(\a)=\a^2-4\a+(3/2)+(2/3)(7/2)^{1/2},
\end{equation}
one concludes that \eqref{3.100} holds since $s(\hat\a_+)=-25/36$.
\end{proof}

Combining Theorem \ref{t3.19} and Lemma \ref{l3.21}, one finally obtains the following result.

\begin{theorem}\lb{t3.22}
Let $f\in C_0^\infty\big(\bbR^3\backslash\{0\}\big)$. Then
\begin{equation}\lb{3.115}
\int_{\bbR^3} |(\Delta f)(x)|^2 \, d^3 x 
 \geq - s \int_{\bbR^3} |x|^{-2} |(\nabla f)(x)|^2 \, d^3 x  + K(s) \int_{\bbR^3} |x|^{-4} |f(x)|^2 \, d^3 x, 
\end{equation}
where
\begin{equation}
K(s)=\begin{cases}
16^{-1}(4s+9), & s\in[-1/2,\infty),\\
16^{-1}[4s+(25/9)], & s\in[-25/36,-1/2).
\end{cases}
\end{equation}
\end{theorem}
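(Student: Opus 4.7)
The plan is to prove Theorem \ref{t3.22} by a straightforward case split on $s$, invoking the two previously established results in their respective ranges. No new computation is needed; the theorem is essentially a repackaging of Theorem \ref{t3.19} and Lemma \ref{l3.21} into a single piecewise statement.

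For $s \in [-1/2, \infty)$, I would specialize Theorem \ref{t3.19} to $\gamma = 0$, $n = 3$. First one checks which of (i), (ii) applies: since $n - 1 = 2$, the interval $\bigl[2 - (n-1)^{1/2}, 2 + (n-1)^{1/2}\bigr] = \bigl[2-\sqrt{2},\, 2+\sqrt{2}\bigr]$ does not contain $0$, so Theorem \ref{t3.19}\,(ii) is the relevant case. The admissible range of $s$ in (ii) becomes
\begin{equation*}
\bigl[-2^{-1}\{(n-2)^2 - (\gamma-2)^2\} + 1 - n, \infty\bigr) = \bigl[-2^{-1}(1 - 4) - 2, \infty\bigr) = [-1/2, \infty),
\end{equation*}
which matches the first branch of $K(s)$. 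The coefficient of $\int_{\bbR^3}|x|^{-4}|f|^2\, d^3x$ from \eqref{3.89} evaluates to $[(\gamma + n - 4)/4]^2[(\gamma-n)^2 + 4s] = (1/16)(9 + 4s)$, which is exactly $K(s)$ on $[-1/2, \infty)$. Thus \eqref{3.115} holds on this range.

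For $s \in [-25/36, -1/2)$, I would simply quote Lemma \ref{l3.21}, which already gives \eqref{3.100} for all $s \in [-25/36, \infty)$ with precisely the coefficient $16^{-1}[4s + 25/9] = K(s)$. Restricting to the subrange $[-25/36, -1/2)$ completes this case.

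There is no genuine obstacle; the only thing to verify is that the choice of branch in $K(s)$ is consistent with the stronger inequality in the overlap region $s \in [-1/2, \infty)$, where both results apply. Since $16^{-1}(4s + 9) \geq 16^{-1}(4s + 25/9)$, using Theorem \ref{t3.19} on $[-1/2, \infty)$ yields the sharper lower bound, justifying the top branch of $K(s)$, while Lemma \ref{l3.21} is required to extend the inequality below $s = -1/2$ down to $s = -25/36$.
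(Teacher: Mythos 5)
Your proof is correct and matches the paper's approach exactly; the paper states Theorem~\ref{t3.22} immediately after the sentence ``Combining Theorem \ref{t3.19} and Lemma \ref{l3.21}, one finally obtains the following result,'' which is precisely the case split and specialization you carry out. The arithmetic verifying that $\gamma=0\notin[2-\sqrt{2},2+\sqrt{2}]$ forces case (ii) of Theorem \ref{t3.19}, that its $s$-range becomes $[-1/2,\infty)$ with coefficient $16^{-1}(4s+9)$, and that Lemma \ref{l3.21} supplies the weaker constant $16^{-1}[4s+(25/9)]$ on $[-25/36,-1/2)$, is all correct.
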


\begin{remark} 
As expected (and by construction), choosing $s=-25/36$ in Theorem \ref{t3.22} recovers the case $n=3$ in Corollary \ref{c3.18}.
\hfill$\diamond$
\end{remark}

\appendix
\section{Weighted Hardy--Rellich inequalities and optimal constants} \lb{sA}

In this Appendix, we prove a weighted Hardy--Rellich inequality in Theorems \ref{t3.17a} and \ref{t3.18a} and show that the constants in the inequality are optimal. We begin by recalling the following simplified version of \cite[Lemma 2.1]{GMP22}: 

\begin{lemma}\lb{l3.13a}
For all $\gamma\in\bbR$ and $f\in C_0^\infty((0,\infty))$,
\begin{equation}\lb{3.49a}
\int_0^\infty r^\gamma|f''(r)|^2\, dr\geq 16^{-1}(1-\gamma)^2(3-\gamma)^2\int_0^\infty r^{\gamma-4}|f(r)|^2\, dr
\end{equation}
and
\begin{equation}\lb{3.50a}
\int_0^\infty r^\gamma|f'(r)|^2\, dr\geq 4^{-1}(1-\gamma)^2\int_0^\infty r^{\gamma-2}|f(r)|^2\, dr.
\end{equation}
Moreover, the constant on the right-hand side of \eqref{3.49a} and \eqref{3.50a} is sharp and the inequality is strict, that is, equality holds in \eqref{3.49a} and \eqref{3.50a} if and only if $f\equiv 0$.
\end{lemma}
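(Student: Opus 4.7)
The plan is to derive both inequalities from a one-parameter factorization on the half-line, use iteration to pass from Hardy to Rellich, and establish sharpness via cutoffs of the formal power-function extremals. By the one-dimensional analogue of Remark \ref{r1.1}, I may assume $f$ is real-valued throughout. For the weighted Hardy inequality \eqref{3.50a}, I introduce the first-order family of expressions $T_\alpha f := r^{\gamma/2}\bigl(f'(r) + \alpha r^{-1} f(r)\bigr)$, $\alpha \in \bbR$, and expand
\[
0 \le \int_0^\infty |T_\alpha f(r)|^2\, dr = \int_0^\infty r^\gamma |f'|^2\, dr + \alpha \int_0^\infty r^{\gamma - 1}(f^2)'\, dr + \alpha^2 \int_0^\infty r^{\gamma - 2} f^2\, dr.
\]
Since $\supp(f)$ is compact in $(0,\infty)$, integration by parts in the middle term produces no boundary contribution and yields $-\alpha(\gamma - 1) \int_0^\infty r^{\gamma - 2} f^2\, dr$. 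Rearranging gives $\int_0^\infty r^\gamma|f'|^2\, dr \ge [\alpha(\gamma - 1) - \alpha^2] \int_0^\infty r^{\gamma - 2} f^2\, dr$; maximizing the right-hand side at $\alpha_* = (\gamma - 1)/2$ yields the coefficient $(\gamma - 1)^2/4 = (1 - \gamma)^2/4$ claimed in \eqref{3.50a}.

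For the Rellich-type bound \eqref{3.49a}, I use iteration. Because $f \in C_0^\infty((0, \infty))$ implies $f' \in C_0^\infty((0, \infty))$, I apply \eqref{3.50a} first to $f'$ with weight $\gamma$ (factor $(1 - \gamma)^2/4$) and then to $f$ itself with weight $\gamma - 2$ (factor $(1-(\gamma-2))^2/4 = (3 - \gamma)^2/4$), composing to give the constant $(1 - \gamma)^2 (3 - \gamma)^2/16$ in \eqref{3.49a}. A direct check confirms that the two Hardy extremals are compatible: the formal extremal $\psi(r) := r^{(3 - \gamma)/2}$ of Rellich has derivative proportional to the formal Hardy extremal $\varphi(r) := r^{(1 - \gamma)/2}$, and substituting $\psi$ into \eqref{3.49a} makes both sides collapse to a constant multiple of $\int_0^\infty r^{-1}\, dr$ with ratio exactly $(1 - \gamma)^2 (3 - \gamma)^2/16$.

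To establish sharpness on $C_0^\infty((0, \infty))$ I truncate these powers: let $\chi_\varepsilon \in C_0^\infty((0, \infty))$ equal $1$ on $[\varepsilon, \varepsilon^{-1}]$, vanish off $[\varepsilon/2, 2\varepsilon^{-1}]$, and satisfy the scale-invariant bounds $r^k|\chi_\varepsilon^{(k)}(r)| = O(1)$ uniformly in $\varepsilon$ for $k = 1, 2$; set $f_\varepsilon := \chi_\varepsilon \psi$ (respectively $\chi_\varepsilon \varphi$ for the Hardy case). On the core $[\varepsilon, \varepsilon^{-1}]$, both integrands in each inequality reduce to the same multiple of $r^{-1}$, whose integral diverges like $2\log(\varepsilon^{-1})$; on each transition annulus the scale-invariant estimates on $\chi_\varepsilon$ force the contribution to be $O(1)$. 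Consequently, the ratio of the two sides of each inequality tends to the stated constant as $\varepsilon \downarrow 0$, proving optimality. Finally, strictness is automatic: equality in \eqref{3.50a} would require $T_{\alpha_*} f \equiv 0$, forcing $f(r) = C r^{(1 - \gamma)/2}$, which has no nontrivial representative in $C_0^\infty((0, \infty))$; equality in \eqref{3.49a} propagated through the iterated chain analogously forces $f' \equiv 0$, hence $f \equiv 0$. The main obstacle will be the cutoff bookkeeping in the sharpness step --- ensuring the transition-annulus errors stay $O(1)$ while the core grows logarithmically --- but this follows from the standard dilation-invariant cutoff construction.
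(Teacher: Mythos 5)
Your proof is correct, and it takes a genuinely different route from what the paper (and its commented-out draft proof) do. For \eqref{3.50a} your first-order factorization $T_\alpha f = r^{\gamma/2}(f' + \alpha r^{-1}f)$ is essentially the one-dimensional version of Remark~\ref{r2.9}, and the algebra is right. For \eqref{3.49a}, however, the paper's approach (citing \cite[Lemma~2.1]{GMP22}, with a second-order factorization $T_{\gamma,\alpha,\beta}=-x^{\gamma/2}(d^2/dx^2)+\alpha x^{\gamma/2-1}(d/dx)+\beta x^{\gamma/2-2}$ in the suppressed draft) goes through a single two-parameter quadratic-form computation, whereas you obtain the Rellich bound by \emph{iterating} Hardy: apply \eqref{3.50a} to $f'$ with weight $\gamma$, then to $f$ with weight $\gamma-2$, and multiply. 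These are not the same argument. Iteration is more elementary, but it is only legitimate here because of the observation you correctly flag: the formal extremizer $r^{(3-\gamma)/2}$ of the Rellich inequality has derivative proportional to $r^{(1-\gamma)/2}$, the formal extremizer of the first Hardy step, while $r^{(3-\gamma)/2}$ is itself the extremizer of the second Hardy step (weight $\gamma-2$). Thus the two Hardy inequalities can be simultaneously saturated, and the product of the sharp Hardy constants is the sharp Rellich constant. This compatibility is special to the one-dimensional radial problem; it is precisely what breaks down in $\bbR^n$, $n\geq 2$, once spherical modes with $\lambda_j\neq 0$ are present, and that failure is the motivation for the spherical correction term in Section~\ref{s3} and the block-diagonal optimization in Appendix~\ref{sA}. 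So your route buys simplicity in the 1D lemma but conceals the difficulty the paper is actually set up to address.

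Two small points of rigor you should make explicit. First, the iteration-based strictness argument needs the degenerate values $\gamma\in\{1,3\}$ handled separately (one of the two Hardy constants vanishes, so ``equality in both steps'' carries no information); for those $\gamma$ the claim $C=0$ makes \eqref{3.49a} trivial and equality iff $f\equiv 0$ follows directly from $\int_0^\infty r^\gamma|f''|^2\,dr=0 \Rightarrow f''\equiv 0 \Rightarrow f$ affine $\Rightarrow f\equiv 0$ by compact support. Second, in the sharpness step you should state, as you implicitly use, that equality of the composite bound forces equality in both intermediate steps (since $A\geq c_1 B\geq c_1 c_2 C$ with $A=c_1c_2C$ forces both inequalities to saturate). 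With those clarifications the cutoff argument — dilation-invariant $\chi_\varepsilon$, logarithmic divergence on the core $[\varepsilon,\varepsilon^{-1}]$, $O(1)$ contributions on the transition annuli from $r^k|\chi_\varepsilon^{(k)}|=O(1)$ — is standard and closes the proof.
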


We remark that \eqref{3.49a} and \eqref{3.50a} are the one-dimensional analogs (now on $(0,\infty)$ rather than $\bbR^n$) of \eqref{2.11} and \eqref{2.35}, respectively.

The following remark gives two general observations that will be needed in the proofs of results to follow.

\begin{remark}\lb{r3.15a}
$(i)$ Let $A,B,C,D,a\in\bbR$ satisfy $A,C,D,a>0$ and $AD-BC\geq0$. Then the function
\begin{equation}
G(t)=(At+B)/(Ct+D),\quad a\leq t,
\end{equation}
is non-decreasing on $[a,\infty)$, and hence
\begin{equation}
\inf\{G(t)\,|\, a\leq t\}=(Aa+B)/(Ca+D).
\end{equation}
$(ii)$ Let $\gamma\in\bbR$ and $n\in\bbN,\ n\geq 2$. Then
\begin{equation}
P_n(\gamma)=5\gamma^2+(2n-24)\gamma+n^2-8n+32\geq0.
\end{equation}
This follows easily from the fact that the polynomial discriminant of $P_n(\gamma)$ is $-16(n-2)^2\leq0$ for $n\geq2$.  \hfill$\diamond$
\end{remark}

\begin{lemma}\lb{l3.16a}
Let $\gamma\in\bbR$ and $f\in C_0^\infty((0,\infty)),\ n\in\bbN,\ n\geq 2$. Let $\lambda_j = j(j + n-2)$,  $j\in\bbN_0,$ once again denote the eigenvalues of $-\Delta_{{\bbS^{n-1}}}$ as in \eqref{3.3}. Then
\begin{align}
\begin{split}\lb{3.59a}
& \int_0^\infty r^{\gamma+n-1}\big|-r^{1-n}(d/dr)(r^{n-1}f'(r))+\lambda_j r^{-2}f(r)\big|^2\, dr \\
&\quad \geq\alpha_{n,\gamma,\lambda_j}\left(\int_0^\infty r^{\gamma+n-3}|f'(r)|^2\, dr+\lambda_j\int_0^\infty r^{\gamma+n-5}|f(r)|^2\, dr\right),
\end{split}
\end{align}
where
\begin{align}\lb{3.64}
\begin{split}
\alpha_{n,\gamma,\lambda_0}&=\alpha_{n,\gamma,0}=4^{-1}(n-\gamma)^2,   \\ 
\alpha_{n,\gamma,\lambda_j}&=\big[4^{-1}(n+\gamma-4)(n-\gamma)+\lambda_j\big]^2\Big/\big[4^{-1}(n+\gamma-4)^2+\lambda_j\big],\quad j\in\bbN. 
\end{split}
\end{align}
\end{lemma}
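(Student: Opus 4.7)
The plan is to expand the operator inside the squared modulus, integrate by parts to reduce the left-hand side of \eqref{3.59a} to a linear combination of three basic integrals, and then apply Lemma \ref{l3.13a}\,\eqref{3.50a} twice.

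Writing $\mathcal{L}_j f := -f'' - (n-1)r^{-1}f' + \lambda_j r^{-2}f$, I would square $\mathcal{L}_j f$, multiply by $r^{\gamma+n-1}$, and integrate by parts on the three cross terms $\int r^{\gamma+n-2}f''f'\,dr$, $\int r^{\gamma+n-3}f''f\,dr$, and $\int r^{\gamma+n-4}f'f\,dr$ (using $\int r^{\mu}\phi'\phi\,dr = -\tfrac{\mu}{2}\int r^{\mu-1}\phi^2\,dr$ and one additional integration by parts on the middle term). With the abbreviations
$$X := \int_0^\infty r^{\gamma+n-1}(f'')^2\,dr,\quad Y := \int_0^\infty r^{\gamma+n-3}(f')^2\,dr,\quad Z := \int_0^\infty r^{\gamma+n-5}f^2\,dr,$$
a direct calculation collapses the LHS of \eqref{3.59a} to $X + aY + bZ$, where $a := (n-1)(1-\gamma) + 2\lambda_j$ and $b := \lambda_j[\lambda_j - (\gamma-2)(\gamma+n-4)]$. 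Set $p := n+\gamma-4$, $q := n-\gamma$, and $P := (p+2)^2/4$.

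Next, I would invoke Lemma \ref{l3.13a}\,\eqref{3.50a} twice: with $\gamma\mapsto \gamma+n-1$ applied to $f'$ this gives $X \geq PY$; with $\gamma\mapsto \gamma+n-3$ applied to $f$ itself this gives the one-dimensional Hardy bound $Y \geq (p^2/4)Z$. Using the first bound, the claim \eqref{3.59a} reduces to
$$(P + a - \alpha_{n,\gamma,\lambda_j})\,Y + (b - \alpha_{n,\gamma,\lambda_j}\lambda_j)\,Z \geq 0.$$
If the $Z$-coefficient is negative and $p \neq 0$, I would invoke the second Hardy bound in the form $Z \leq (4/p^2)\,Y$ to absorb the negative $Z$-term into the $Y$-term; the combined $Y$-coefficient then becomes $p^{-2}\bigl[p^2(P+a) + 4b - \alpha(p^2+4\lambda_j)\bigr]$, which vanishes precisely when $\alpha = [p^2(P+a) + 4b]/(p^2 + 4\lambda_j)$. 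A direct algebraic reduction, using only the identities $(p+2)^2 - q^2 = 4(n-1)(\gamma-1)$ and $q - p = 2(2-\gamma)$, confirms that this value of $\alpha$ agrees with $\alpha_{n,\gamma,\lambda_j}$ as defined in \eqref{3.64}.

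It then remains to check the side condition $P + a - \alpha_{n,\gamma,\lambda_j} \geq 0$. A parallel computation yields
$$(P + a - \alpha_{n,\gamma,\lambda_j})\,(p^2/4 + \lambda_j) = \lambda_j\bigl[\,P_n(\gamma)/4 + \lambda_j\,\bigr],$$
with $P_n(\gamma)$ as in Remark \ref{r3.15a}\,(ii); non-negativity is then immediate from that remark. The case $j=0$ reduces cleanly to $(P + a)Y = (n-\gamma)^2\,Y/4 = \alpha_{n,\gamma,0}\,Y$, and the degenerate subcase $p=0$ (i.e., $\gamma = 4-n$) yields $b = \lambda_j^2 = \alpha_{n,4-n,\lambda_j}\lambda_j$, so the $Z$-term vanishes identically and no trade between $Y$ and $Z$ is needed. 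The main obstacle is purely the algebraic bookkeeping: identifying $[p^2(P+a) + 4b]/(p^2 + 4\lambda_j)$ with $\alpha_{n,\gamma,\lambda_j}$, and reducing $(P + a - \alpha_{n,\gamma,\lambda_j})(p^2/4 + \lambda_j)$ to $\lambda_j[P_n(\gamma)/4 + \lambda_j]$; both collapse to the two short identities above, so the computations are lengthy but conceptually routine.
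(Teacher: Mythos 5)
Your proposal is correct and follows essentially the same route as the paper: expand the squared operator, integrate by parts to reduce the left side to $X+aY+bZ$, apply Lemma~\ref{l3.13a}\,\eqref{3.50a} once to absorb $X$ into $Y$, and then use the one-dimensional Hardy bound $Y\geq(p^2/4)Z$ to optimize over the ratio $Y/Z$. The paper packages this last step by invoking the monotonicity of $t\mapsto(At+B)/(t+\lambda_j)$ (Remark~\ref{r3.15a}\,(i)) together with $AD-BC\geq0$, while you trade $Z$ for $Y$ explicitly and verify the residual coefficient $(P+a-\alpha)(p^2/4+\lambda_j)=\lambda_j[P_n(\gamma)/4+\lambda_j]\geq0$ directly; these two quantities coincide, so the underlying computation is identical.
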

\begin{proof}
First note that
\begin{align}\lb{3.61a}
& \int_0^\infty r^{\gamma+n-1}\big|-r^{1-n}(d/dr)(r^{n-1}f'(r))+\lambda_j r^{-2}f(r)\big|^2\, dr \\
&\quad =\int_0^\infty r^{\gamma+n-1}\big[|f''(r)|^2+2(n-1)r^{-1}\Re\big(f''(r)\overline{f'(r)}\big)-2\lambda_j r^{-2}\Re\big(f''(r)\overline{f(r)}\big)   \no  \\
&\hspace{2.6cm}+(n-1)^2r^{-2}|f'(r)|^2-2\lambda_j(n-1)r^{-3}\Re\big(f'(r)\overline{f(r)}\big)+\lambda_j^2 r^{-4}|f(r)|^2\big]\, dr.  \no 
\end{align}
Furthermore, if $f=f_1+i f_2$ where $f_1$ and $f_2$ are real-valued, then
\begin{align}\lb{3.62a}
\Re\big(f''(r)\overline{f'(r)}\big)&=f_1''(r)f_1'(r)+f_2''(r)f_2'(r), \no \\
\Re\big(f''(r)\overline{f(r)}\big)&=f_1''(r)f_1(r)+f_2''(r)f_2(r),\\
\Re\big(f'(r)\overline{f(r)}\big)&=f_1'(r)f_1(r)+f_2'(r)f_2(r). \no 
\end{align}
Therefore, by \eqref{1.X} and \eqref{3.62a} we can, and will, assume without loss of generality that $f$ is real-valued for the rest of the proof. Then, by \eqref{3.61a} and \cite[Lemma 2.2]{GMP24}, 
\begin{align}\lb{3.63a}
& \int_0^\infty r^{\gamma+n-1}\big|-r^{1-n}(d/dr)(r^{n-1}f'(r)))+\lambda_j r^{-2}f(r)\big|^2\, dr  \no \\
& \quad = \int_0^\infty r^{\gamma+n-1}|f''(r)|^2\, dr+[2\lambda_j+(n-1)(1-\gamma)]\int_0^\infty r^{\gamma+n-3}|f'(r)|^2\, dr   \no  \\
&\qquad + \big[\lambda_j^2+\lambda_j(\gamma+n-4)(2-\gamma)\big]\int_0^\infty r^{\gamma+n-5}|f(r)|^2\, dr.
\end{align}
Applying Lemma \ref{l3.13a} to \eqref{3.63a} yields
\begin{align}\lb{3.64a}
\int_0^\infty & r^{\gamma+n-1}\big|-r^{1-n}(d/dr)(r^{n-1}f'(r))+\lambda_j r^{-2}f(r)\big|^2\, dr  \no \\
&\geq \big[4^{-1}(n-\gamma)^2+2\lambda_j\big]\int_0^\infty r^{\gamma+n-3}|f'(r)|^2\, dr     \\
&\quad \; +\big[\lambda_j^2+\lambda_j(\gamma+n-4)(2-\gamma)\big]\int_0^\infty r^{\gamma+n-5}|f(r)|^2\, dr.  \no 
\end{align}
Next, we want to compare \eqref{3.64a} to the integrals on the right-hand side of \eqref{3.59a}, for which we will utilize Remark \ref{r3.15a} by introducing
\begin{align}
\begin{split}
t&=t(n,\gamma,f)=\left(\int_0^\infty r^{\gamma+n-3}|f'(r)|^2\, dr\right)\left(\int_0^\infty r^{\gamma+n-5}|f(r)|^2\, dr\right)^{-1}, \\
A&=A(n,\gamma,\lambda_j)=4^{-1}(n-\gamma)^2+2\lambda_j,\\
B&=B(n,\gamma,\lambda_j)=\lambda_j^2+\lambda_j(\gamma+n-4)(2-\gamma),     \lb{3.65a} \\
C&=1,\quad D=\lambda_j,\quad j\geq 1.
\end{split}
\end{align}
Then \eqref{3.64a} and applying \eqref{3.50a} yields
\begin{align}\lb{3.66a}
\dfrac{\int_0^\infty r^{\gamma+n-1}\big|-r^{1-n}(d/dr)(r^{n-1}f'(r))+\lambda_j r^{-2}f(r)\big|^2\, dr}{\int_0^\infty r^{\gamma+n-3}|f'(r)|^2\, dr+\lambda_j\int_0^\infty r^{\gamma+n-5}|f(r)|^2\, dr} \geq (At+B)/(Ct+D),
\end{align}
where $t\geq 4^{-1}(\gamma+n-4)^2$. By Remark \ref{r3.15a} $(ii)$, one concludes that
\begin{equation}
AD-BC=\lambda_j^2+4^{-1}\lambda_j\big[5\gamma^2+(2n-24)\gamma+n^2-8n+32\big]\geq0.
\end{equation}
Hence, one can apply Remark \ref{r3.15a} $(i)$ to \eqref{3.66a}, proving \eqref{3.59a} for $\lambda_j,\ j\geq1$.

The remaining case of $\lambda_0=0$ in \eqref{3.59a} follows from directly comparing \eqref{3.64a} with the right-hand side of \eqref{3.59a}, yielding
\begin{equation}\lb{3.68a}
\alpha_{n,\gamma,\lambda_0}=\alpha_{n,\gamma,0}=4^{-1}(n-\gamma)^2,
\end{equation}
completing the proof.
\end{proof}

\begin{theorem}\lb{t3.17a}
Let $\gamma\in\bbR$ and $f\in C_0^\infty(\bbR^n\backslash \{0\}),\ n\in\bbN,\ n\geq 2$. Then
\begin{equation}\lb{3.69a}
\int_{\bbR^n} |x|^\gamma|(\Delta f)(x)|^2\, d^nx\geq A_{n,\gamma}\int_{\bbR^n}|x|^{\gamma-2}|(\nabla f)(x)|^2\, d^nx,
\end{equation}
where
\begin{equation}
A_{n,\gamma}={\min}_{j \in \bbN_0}\{\alpha_{n,\gamma,\lambda_j}\}.     \lb{A.17} 
\end{equation}
\end{theorem}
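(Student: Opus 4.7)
The plan is to reduce the multidimensional inequality to infinitely many one-dimensional inequalities via separation of variables on $\bbS^{n-1}$, and then apply Lemma \ref{l3.16a} (which provides exactly the constants $\alpha_{n,\gamma,\lambda_j}$) on each spectral channel.

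First, expand $f \in C_0^\infty(\bbR^n\backslash\{0\})$ in the orthonormal basis $\{\varphi_{j,\ell}\}$ of eigenfunctions of $-\Delta_{\bbS^{n-1}}$:
\begin{equation*}
f(r,\theta) = \sum_{j\in\bbN_0} \sum_{\ell=1}^{m(\lambda_j)} F_{f,j,\ell}(r)\, \varphi_{j,\ell}(\theta),
\end{equation*}
where each $F_{f,j,\ell} \in C_0^\infty((0,\infty))$. Using the polar form $-\Delta = -r^{1-n}(d/dr)(r^{n-1}(d/dr)) + r^{-2}(-\Delta_{\bbS^{n-1}})$, one obtains
\begin{equation*}
(\Delta f)(r,\theta) = \sum_{j,\ell}\Big[r^{1-n}\tfrac{d}{dr}\big(r^{n-1}F_{f,j,\ell}'(r)\big) - \lambda_j r^{-2}F_{f,j,\ell}(r)\Big]\varphi_{j,\ell}(\theta).
\end{equation*}
By orthonormality of $\{\varphi_{j,\ell}\}$ in $L^2(\bbS^{n-1};d^{n-1}\omega)$,
\begin{equation*}
\int_{\bbR^n} |x|^\gamma |(\Delta f)(x)|^2 \, d^n x = \sum_{j,\ell}\int_0^\infty r^{\gamma+n-1}\big|{-}r^{1-n}\tfrac{d}{dr}\big(r^{n-1}F_{f,j,\ell}'(r)\big) + \lambda_j r^{-2}F_{f,j,\ell}(r)\big|^2\, dr.
\end{equation*}

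Next, handle the gradient side. Using $|\nabla f|^2 = |\partial_r f|^2 + r^{-2}|\nabla_{\bbS^{n-1}} f|^2$, orthonormality, and the identity $\int_{\bbS^{n-1}}|\nabla_{\bbS^{n-1}}\varphi_{j,\ell}|^2 \, d^{n-1}\omega = \lambda_j$ (obtained by integration by parts against $-\Delta_{\bbS^{n-1}}\varphi_{j,\ell} = \lambda_j\varphi_{j,\ell}$), one gets
\begin{equation*}
\int_{\bbR^n} |x|^{\gamma-2}|(\nabla f)(x)|^2\, d^n x = \sum_{j,\ell}\bigg[\int_0^\infty r^{\gamma+n-3}|F_{f,j,\ell}'(r)|^2\, dr + \lambda_j\int_0^\infty r^{\gamma+n-5}|F_{f,j,\ell}(r)|^2\, dr\bigg].
\end{equation*}

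The main step is now to apply Lemma \ref{l3.16a} termwise: for each pair $(j,\ell)$,
\begin{align*}
\int_0^\infty r^{\gamma+n-1}\big|{-}r^{1-n}\tfrac{d}{dr}\big(r^{n-1}F_{f,j,\ell}'\big) + \lambda_j r^{-2}F_{f,j,\ell}\big|^2 dr \geq \alpha_{n,\gamma,\lambda_j}\bigg[&\int_0^\infty r^{\gamma+n-3}|F_{f,j,\ell}'|^2 dr \\
& + \lambda_j\int_0^\infty r^{\gamma+n-5}|F_{f,j,\ell}|^2\, dr\bigg].
\end{align*}
Since $\alpha_{n,\gamma,\lambda_j} \geq A_{n,\gamma}$ for every $j\in\bbN_0$ by definition of $A_{n,\gamma}$ in \eqref{A.17}, replacing $\alpha_{n,\gamma,\lambda_j}$ by $A_{n,\gamma}$ and summing over $(j,\ell)$ delivers \eqref{3.69a}.

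The principal technical obstacle is legitimizing the termwise spherical-harmonic manipulations: since $f \in C_0^\infty(\bbR^n\backslash\{0\})$, each $F_{f,j,\ell}$ lies in $C_0^\infty((0,\infty))$ with support contained in a common annulus independent of $(j,\ell)$, so the sums are at worst convergent in $L^2$-sense on that annulus, and smoothness of $f$ yields rapid decay of $\lambda_j^k F_{f,j,\ell}$ for every $k\in\bbN_0$. This makes all the termwise differentiations, applications of Fubini, and Parseval-type identities rigorous, so the proof reduces, as outlined, to invoking Lemma \ref{l3.16a}.
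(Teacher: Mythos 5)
Your proposal is correct and follows essentially the same route as the paper: decompose $f$ into spherical harmonics, use orthonormality to split both weighted integrals into a sum of one-dimensional radial integrals over the spectral channels $\lambda_j$, apply the one-dimensional Lemma \ref{l3.16a} termwise, and then replace each $\alpha_{n,\gamma,\lambda_j}$ by its infimum $A_{n,\gamma}$. The only cosmetic difference is that the paper outsources the three Parseval-type decompositions to citations of prior work (eqs.~(2.5), (B.14), (2.7) of \cite{GMP24}) rather than deriving them on the spot as you sketch; your closing remarks about decay of $\lambda_j^k F_{f,j,\ell}$ justifying the interchanges are the standard background to those identities.
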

\begin{proof}
Without loss of generality, we once again assume $f$ is real-valued. We intend to utilize \eqref{3.41} in the proof, so one first notes that by \cite[Eq. (2.5)]{GMP24},
\begin{align}\lb{3.71a}
\int_{\bbR^n}|x|^{\gamma-4}|(\nabla_{\bbS^{n-1}} f)(x)|^2\, d^n x&=\int_0^\infty r^{\gamma+n-5}\int_{{\bbS^{n-1}}}|(\nabla_{\bbS^{n-1}} f(r,\dott))(\theta)|^2\, d^{n-1}\omega(\theta) \, dr   \no \\
&=\int_0^\infty r^{\gamma+n-5}(-\Delta_{\bbS^{n-1}} f(r,\dott),f(r,\dott))_{L^2({\bbS^{n-1}};d^{n-1}\omega)}\, dr  \no  \\
&=\int_0^\infty r^{\gamma+n-5}\sum_{j\in\bbN_0}\sum_{\ell=1}^{m(\lambda_j)}\lambda_j |F_{f,j,\ell}(r)|^2\, dr   \no  \\
&=\sum_{j\in\bbN_0}\sum_{\ell=1}^{m(\lambda_j)}\lambda_j \int_0^\infty r^{\gamma+n-5}|F_{f,j,\ell}(r)|^2\, dr.
\end{align}
Similarly, one has by \cite[Eq. (B.14)]{GMP24},
\begin{align}\lb{3.72a}
\int_{\bbR^n}|x|^{\gamma-2}|(\partial f/\partial r)(x)|^2\, d^n x=\sum_{j\in\bbN_0}\sum_{\ell=1}^{m(\lambda_j)} \int_0^\infty r^{\gamma+n-3}|F'_{f,j,\ell}(r)|^2\, dr.
\end{align}
On the other hand, by \cite[Eq. (2.7)]{GMP24}
\begin{align}\lb{3.73a}
\int_{\bbR^n}|x|^{\gamma}&|(\Delta f)(x)|^2\, d^n x\\
&=\sum_{j\in\bbN_0}\sum_{\ell=1}^{m(\lambda_j)} \int_0^\infty r^{\gamma+n-1}\big|-r^{1-n}(d/dr)\big(r^{n-1}F'_{f,j,\ell}(r)\big) +\lambda_j r^{-2} F_{f,j,\ell}(r)\big|^2\, dr.  \no 
\end{align}
Therefore, by \eqref{3.41}, \eqref{3.59a}, \eqref{3.71a}, \eqref{3.72a}, and \eqref{3.73a} one has
\begin{align}
& A_{n,\gamma}\int_{\bbR^n} |x|^{\gamma-2}|(\nabla f)(x)|^2\, d^nx   \no \\
& \quad = A_{n,\gamma}\left(\int_{\bbR^n}|x|^{\gamma-4}|(\nabla_{\bbS^{n-1}} f)(x)|^2+|x|^{\gamma-2}|(\partial f/\partial r)(x)|^2\, d^n x\right) \no \\
& \quad \leq \sum_{j\in\bbN_0}\sum_{\ell=1}^{m(\lambda_j)}\alpha_{n,\gamma,\lambda_j}\left(\lambda_j \int_0^\infty r^{\gamma+n-5}|F_{f,j,\ell}(r)|^2\, dr + \int_0^\infty r^{\gamma+n-3}|F'_{f,j,\ell}(r)|^2\, dr\right)  \no  \\
&\quad \leq \sum_{j\in\bbN_0}\sum_{\ell=1}^{m(\lambda_j)} \int_0^\infty r^{\gamma+n-1}\big|-r^{1-n}(d/dr)\big(r^{n-1}F'_{f,j,\ell}(r)\big) +\lambda_j r^{-2} F_{f,j,\ell}(r)\big|^2\, dr  \no  \\
& \quad = \int_{\bbR^n} |x|^\gamma|(\Delta f)(x)|^2\, d^nx.
\end{align}
\end{proof}

\begin{remark} \lb{r4.6}
Whenever $A_{n,\gamma}$ vanishes, inequality \eqref{3.69a} is rendered trivial. In this context we recall the following result from \cite[Theorem~2.3]{GPS24}: 
\begin{align} 
\begin{split}
& \int_{B_n(0;R)} |x|^\g |(\Delta f)(x)|^2\, d^n x \geq A_{n,\g} \int_{B_n(0;R)} |x|^{\g-2} |(\nabla f)(x)|^2\, d^n x  \\
& \quad + 4^{-1}\int_{B_n(0;R)} |x|^{\g-2} \Bigg(\sum_{k=1}^{N} \prod_{p=1}^{k} [\ln_{p}(\eta/|x|)]^{-2} \Bigg)|(\nabla f)(x)|^2\, d^n x    \lb{4.31} \\
& \quad + 4^{-1}\int_{B_n(0;R)}|x|^{\gamma-4}\Bigg(\sum_{k=1}^{N} \prod_{p=1}^{k} [\ln_{p}(\eta/|x|)]^{-2} \Bigg)|(\nabla_{\bbS^{n-1}} f)(x)|^2\, d^n x,   \\
& \, R \in (0,\infty), \; \gamma \in \bbR, \; N, n \in \bbN, \, n\geq2, \; \eta \in [e_{N}R,\infty), \; 
f \in C_{0}^{\infty}(B_n(0;R)\bs\{0\}).
\end{split} 
\end{align}
which yields an appropriate logarithmic refinement even if $A_{n,\gamma}$ vanishes. (For the notation employed in \eqref{4.31} see Remark \ref{r3.13}.) 
\hfill $\diamond$
\end{remark}

Of course, by restriction, the inequalities \ref{3.48a} and \eqref{4.31} extend to the case where $f \in C_0^{\infty} (B_n(0;R) \backslash \{0\})$, $n \in \bbN$, $n \geq 2$, is replaced by $f \in C_0^{\infty} (\Omega \backslash \{0\})$, where $\Omega \subseteq B_n(0;R)$ is open and bounded with $0 \in \Omega$, without changing the constants in these inequalities. 

\begin{theorem}\lb{t3.18a}
Let $R \in (0, \infty)$, $\gamma\in\bbR$, and $n\in\bbN,\ n\geq 2$. Excluding the cases $(i)$ $n=2,\ \gamma=2$ and $(ii)$ $n=3,\ \gamma=1$, the constant $A_{n,\gamma}$ in the inequality 
\begin{equation}\lb{A.23}
\int_{B_n(0;R)} |x|^\gamma|(\Delta f)(x)|^2\, d^nx\geq A_{n,\gamma}\int_{B_n(0;R)}|x|^{\gamma-2}|(\nabla f)(x)|^2\, d^nx, \quad f \in C_0^{\infty}(B_n(0;R)\backslash\{0\}), 
\end{equation}
with 
\begin{equation}
A_{n,\gamma}={\min}_{j \in \bbN_0}\{\alpha_{n,\gamma,\lambda_j}\}, 
\quad \lambda_k = k(k+n-2), \; k \in \bbN_0,     \lb{A.24} 
\end{equation}
implied by \eqref{3.69a}, \eqref{A.17}, is sharp. Here 
\begin{align}\lb{A.25}
\begin{split}
\alpha_{n,\gamma,\lambda_0}&=\alpha_{n,\gamma,0}=4^{-1}(n-\gamma)^2,   \\ 
\alpha_{n,\gamma,\lambda_j}&=\big[4^{-1}(n+\gamma-4)(n-\gamma)+\lambda_j\big]^2\Big/\big[4^{-1}(n+\gamma-4)^2+\lambda_j\big],\quad j\in\bbN. 
\end{split}
\end{align}
In particular, the constant $A_{n,\gamma}$ in Theorem \ref{t3.17a} is sharp $($i.e., inequality \eqref{A.23} remains sharp as $B_n(0;R)$ is consistently being replaced by $\bbR^n$$)$.
\end{theorem}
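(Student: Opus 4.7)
The plan is to prove sharpness by a minimizing-sequence argument that exploits the orthogonal decomposition into spherical-harmonic modes already employed in the proof of Theorem \ref{t3.17a}. Let $j^{\ast} \in \bbN_{0}$ attain the minimum in \eqref{A.24} and fix any $\ell \in \{1,\dots,m(\lambda_{j^{\ast}})\}$. For a separable test function $f(x) = F(|x|)\varphi_{j^{\ast},\ell}(x/|x|)$ with $F \in C_{0}^{\infty}((0,R))$, the identities \eqref{3.71a}--\eqref{3.73a} collapse each of the two integrals in \eqref{A.23} onto the single $(j^{\ast},\ell)$-summand, so the multidimensional ratio reduces to the one-dimensional quotient
\begin{equation*}
\frac{\int_{0}^{\infty} r^{\gamma+n-1}\big|-r^{1-n}(r^{n-1}F')' + \lambda_{j^{\ast}} r^{-2}F\big|^{2}\, dr}{\int_{0}^{\infty} r^{\gamma+n-3}|F'|^{2}\, dr + \lambda_{j^{\ast}}\int_{0}^{\infty} r^{\gamma+n-5}|F|^{2}\, dr}.
\end{equation*}
Consequently, sharpness of $A_{n,\gamma}$ in \eqref{A.23} reduces to sharpness of the constant $\alpha_{n,\gamma,\lambda_{j^{\ast}}}$ in the one-dimensional inequality \eqref{3.59a}.

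To prove sharpness in \eqref{3.59a}, I would probe it by the formal extremals revealed by the proof of Lemma \ref{l3.16a}: that proof shows $\alpha_{n,\gamma,\lambda_{j}} = G(a)$ with $G(t) = (At+B)/(Ct+D)$ non-decreasing (coefficients $A, B, C, D$ as in \eqref{3.65a}) and $a = (n+\gamma-4)^{2}/4$ arising from the Hardy inequality \eqref{3.50a}. To saturate both inequalities in this chain simultaneously, one needs test functions for which the Hardy quotient $t(F_{\epsilon})$ tends to $a$ while the ratio in \eqref{3.59a} tends to $G(a)$. When $a > 0$ this is achieved by truncations of the formal Hardy extremal: with $s_{\ast} = (4-n-\gamma)/2$ (so $s_{\ast}^{2} = a$), take $F_{\epsilon}(r) = r^{s_{\ast}+\epsilon}\chi_{\epsilon}(r)$, where $\chi_{\epsilon} \in C_{0}^{\infty}((0,R))$ is a smooth cutoff equal to $1$ on $[\delta_{\epsilon}, R/2]$ with the inner scale $\delta_{\epsilon} \downarrow 0$ tuned so the plateau region dominates; a direct asymptotic analysis shows that both integrals diverge of order $\epsilon^{-1}$, their ratio converging to $\alpha_{n,\gamma,\lambda_{j}}$, while cutoff-transition contributions remain uniformly bounded. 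When $a = 0$ but $\lambda_{j^{\ast}} > 0$ (so $n+\gamma=4$ with $j^{\ast} \geq 1$), the formal extremal degenerates to a radial constant multiplied by the spherical harmonic, and one uses instead a radial bump profile $F_{\epsilon}$ whose plateau broadens with $\epsilon$, driving $t(F_{\epsilon}) \to 0$ and the quotient to $G(0) = \lambda_{j^{\ast}}$. The final assertion---sharpness on $\bbR^{n}$---then follows at once from scale invariance: under $f \mapsto f(\lambda\,\cdot\,)$ both sides of \eqref{A.23} transform by the common factor $\lambda^{4-n-\gamma}$, so any compactly supported test function on $\bbR^{n}$ can be rescaled into $B_{n}(0;R)$ without altering the ratio, forcing the sharp constants to coincide.

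The principal technical obstacle lies in the asymptotic analysis of the preceding step: one must verify that the leading contributions to both integrals diverge at exactly the rate predicted by the formal homogeneity computation, while every cutoff-induced correction---from the smooth transition of $\chi_{\epsilon}$ near $R/2$ and from the lower truncation at scale $\delta_{\epsilon}$---remains uniformly bounded in $\epsilon$, so that the quotient genuinely converges to $\alpha_{n,\gamma,\lambda_{j^{\ast}}}$ rather than to some strictly larger limit. The exclusion of the cases $(n,\gamma) \in \{(2,2),(3,1)\}$ corresponds precisely to the degenerate confluence $a = 0$ with $j^{\ast} = 0$ (equivalently $n+\gamma = 4$ with the minimum attained on the radial mode): for $(n,\gamma)=(2,2)$ the theorem is vacuous since $A_{2,2} = 0$, while for $(n,\gamma)=(3,1)$ the reduced one-dimensional inequality becomes $\int_{0}^{R} r^{3}|F''|^{2}\, dr \geq \int_{0}^{R} r|F'|^{2}\, dr$, whose formal extremal $F(r) = \ln r$ cannot be approximated by compactly supported test functions satisfying the built-in constraint $\int_{0}^{R} F'(r)\, dr = 0$ along the elementary lines above; handling this case would require a finer logarithmic construction beyond the scope of the present strategy.
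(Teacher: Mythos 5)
Your plan matches the paper's proof in essentially every respect: the extremizing sequence consists of a truncated near-extremal power $r^{(4-n-\gamma)/2 + O(\varepsilon)}$ attached to a fixed spherical harmonic $\varphi_{j_0,\ell}$ in the minimizing mode, the reduction to the radial problem proceeds through the same orthogonal decomposition and integral identities, and the borderline case $n+\gamma=4$ with $j_0\geq 1$ is handled by the same degenerate profile. The paper then carries out in full the cutoff-error estimates you correctly flag as the principal technical burden, and invokes the simpler inclusion $C_0^\infty(B_n(0;R)\backslash\{0\}) \subset C_0^\infty(\bbR^n\backslash\{0\})$ in place of your scale-invariance argument to pass from the ball to $\bbR^n$.
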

\begin{proof} 
We start with some notation. Let $R \in (0,\infty)$ and $\psi: (0,R) \to [0,1]$ be a $C^{\infty}$-function satisfying 
\begin{align}
& (i) \;\;\; \psi(r) = \begin{cases}
0, & 0 < r \leq R/10, \\
1, & R/5 \leq r \leq 4R/5, \\
0, & 9R/10 \leq r < R.  
\end{cases} \\
& (ii) \;\; \text{$\psi(\dott)$ is strictly increasing on $R/10 < r < R/5$.}  \\
& (iii) \; \text{$\psi(\dott)$ is strictly decreasing on $4R/5 < r < 9R/10$.}  \\
& (iv) \;\, \text{For all sufficiently small $\varepsilon > 0$, let $\psi_{\varepsilon} : (0,R) \to [0,1]$ be a $C^{\infty}$-function satisfying}   \no \\
& \hspace*{8mm} \psi_{\varepsilon} (r) = \begin{cases}
\psi(r/\varepsilon), & 0 < r \leq \varepsilon R/5, \\
1, & \varepsilon R/5 \leq r \leq 4R/5, \\
\psi(r), & 4R/5 \leq r < R.
\end{cases}
\end{align}

We start by considering the case $\gamma + n \neq 4$. Since $\lim_{j \to \infty} \alpha_{n,\gamma,\lambda_j} = \infty$ (cf.\ \eqref{A.25}), there exists $j_0 \in \bbN_0$ such that $A_{n,\gamma} = \alpha_{n,\gamma,\lambda_{j_0}}$. For all sufficiently small $\varepsilon > 0$ let $g_{\varepsilon} \in C_0^{\infty}(B_n(o;R)\backslash\{0\})$ be defined by
\begin{equation}
g_{\varepsilon}(r,\theta) = r^p \psi_{\varepsilon}(r) \varphi_{j_0,1}(\theta), 
\quad x = (r,\theta) \in B_n(0;R)\backslash\{0\},     \lb{A.30a}
\end{equation}
where 
\begin{equation}
p = p(n,\gamma,\varepsilon) = (4-n-\gamma+\varepsilon)/2.     \lb{A.31a}
\end{equation}
Then 
\begin{align}
(-\Delta g_{\varepsilon})(r,\theta) &= \bigg[- r^{1-n} \f{\partial}{\partial r}\bigg(r^{n-1} \f{\partial}{\partial r}
\big(r^p \psi_{\varepsilon}(r)\big)\bigg)\bigg] \varphi_{j_0,1}(\theta)   \no \\
& \quad + r^{p-2} \psi_{\varepsilon}(r) (- \Delta_{\bbS^{n-1}} \varphi_{j_0,1})(\theta)    \no \\
&= \bigg[-r^{1-n} \f{\partial}{\partial r}\big(p r^{p+n-2} \psi_{\varepsilon} (r) + r^{p+n-1} \psi_{\varepsilon}' (r)\big)\bigg] \varphi_{j_0,1}(\theta)    \no \\
& \quad + \lambda_{j_0} r^{p-2} \psi_{\varepsilon} (r) \varphi_{j_0,1}(\theta)     \no \\
&= \big\{- r^{1-n} \big[p(p+n-2) r^{p+n-3} \psi_{\varepsilon}(r) + (2p+n-1)r^{p+n-2} \psi_{\varepsilon}' (r)   \no \\
& \hspace*{7mm} + r^{p+n-1} \psi_{\varepsilon}'' (r)\big]\big\} \varphi_{j_0,1}(\theta) 
+ \lambda_{j_0} r^{p-2} \psi_{\varepsilon}(r) \varphi_{j_0,1}(\theta)    \no \\
&= [\lambda_{j_0} - p(p+n-2)] r^{p-2} \psi_{\varepsilon}(r) \varphi_{j_0,1}(\theta) - 
(2p+n-1) r^{p-1} \psi_{\varepsilon}'(r) \varphi_{j_0,1}(\theta)    \no \\
& \quad - r^p \psi_{\varepsilon}''(r) \varphi_{j_0,1}(\theta),     \lb{A.30} 
\end{align}
and by \eqref{3.41},
\begin{align}
|(\nabla g_{\varepsilon})(r,\theta)|^2 &= \bigg|\f{\partial}{\partial r} \big[r^p \psi_{\varepsilon}(r) \big]\bigg|^2 
|\varphi_{j_0,1}(\theta)|^2 + r^{2p-2} \psi_{\varepsilon}(r)^2 |(\nabla_{\bbS^{n-1}} \varphi_{j_0,1})(\theta)|^2    \no \\
&= \big\{p^2 r^{2p-2} \psi_{\varepsilon}(r)^2 + 2p r^{2p-1} \psi_{\varepsilon}(r) \psi_{\varepsilon}'(r)
+ r^{2p} [\psi_{\varepsilon}'(r)]^2\big\} |\varphi_{j_0,1}(\theta)|^2    \no \\
&\quad + r^{2p-2} \psi_{\varepsilon}(r)^2 |(\nabla_{\bbS^{n-1}} \varphi_{j_0,1})(\theta)|^2.    \lb{A.31}
\end{align}
Thus, by \eqref{A.30},
\begin{align}
& \int_{B_n(0;R)} |x|^{\gamma} |(\Delta g_{\varepsilon})(x)|^2 \, d^n x     \no \\
& \quad = \int_0^R \int_{\bbS^{n-1}} r^{\gamma+n-1} \big\{[\lambda_{j_0} - p(p+n-2)] r^{p-2} \psi_{\varepsilon}(r) - (2p+n-1) r^{p-1} \psi_{\varepsilon}'(r)   \no \\
& \hspace*{3.55cm} - r^p \psi_{\varepsilon}''(r)\big\}^2  |\varphi_{j_0,1}(\theta)|^2 d^{n-1} \omega(\theta) \, dr 
\no \\
& \quad = \int_0^R r^{\gamma+n-1} \big\{[\lambda_{j_0} - p(p+n-2)]^2 r^{2p-4} \psi_{\varepsilon}(r)^2   \no \\
& \hspace*{2.7cm} - 2 [\lambda_{j_0} - p(p+n-2)] (2p+n-1) r^{2p-3} \psi_{\varepsilon}(r)\psi_{\varepsilon}'(r)  \no \\
& \hspace*{2.7cm} - 2 [\lambda_{j_0} - p(p+n-2)] r^{2p-2} \psi_{\varepsilon}(r)\psi_{\varepsilon}''(r) \no \\
& \hspace*{2.7cm} + (2p+n-1)^2 r^{2p-2} [\psi_{\varepsilon}'(r)]^2 + 2 (2p+n-1) r^{2p-1} \psi_{\varepsilon}'(r)\psi_{\varepsilon}''(r)   \no \\
& \hspace*{2.7cm} + r^{2p} [\psi_{\varepsilon}''(r)]^2\big\} \,dr    \no \\
& \quad = \big[\lambda_{j_0} - 4^{-1} (4-n-\gamma+\varepsilon)(n-\gamma+\varepsilon)\big]^2 
\int_0^R r^{-1 + \varepsilon} \psi_{\varepsilon}(r)^2 \, dr   \no \\
& \qquad - 2 \big[\lambda_{j_0} - 4^{-1} (4-n-\gamma+\varepsilon)(n-\gamma+\varepsilon)\big] (3-\gamma+\varepsilon) \int_0^R r^{\varepsilon} \psi_{\varepsilon}(r) \psi_{\varepsilon}'(r) \, dr   \no \\
& \qquad - 2 \big[\lambda_{j_0} - 4^{-1} (4-n-\gamma+\varepsilon)(n-\gamma+\varepsilon)\big] 
\int_0^R r^{1+\varepsilon} \psi_{\varepsilon}(r) \psi_{\varepsilon}''(r) \, dr   \no \\
& \qquad + (3-\gamma+\varepsilon)^2 \int_0^R r^{1+\varepsilon} [\psi_{\varepsilon}'(r)]^2 \, dr   \no \\
& \qquad + 2(3-\gamma+\varepsilon) \int_0^R r^{2+\varepsilon} \psi_{\varepsilon}'(r) \psi_{\varepsilon}''(r) \, dr    \no \\ 
& \qquad + \int_0^R r^{3+\varepsilon} [\psi_{\varepsilon}''(r)]^2 \, dr,     \lb{A.32}
\end{align}
and by \eqref{A.31},
\begin{align}
& \int_{B_n(0;R)} |x|^{\gamma-2} |(\nabla g_{\varepsilon})(x)|^2 \, d^nx   \no \\
& \quad = \int_0^R \int_{\bbS^{n-1}} r^{\gamma+n-3} \big\{\big[p^2 r^{2p-2} \psi_{\varepsilon}(r)^2 
+ 2p r^{2p-1} \psi_{\varepsilon}(r) \psi_{\varepsilon}'(r) + r^{2p} [\psi_{\varepsilon}'(r)]^2 \big] \big\} 
|\varphi_{j_0,1}(\theta)|^2     \no \\
& \hspace*{3.6cm} + r^{2p-2} \psi_{\varepsilon}(r)^2 |(\nabla_{\bbS^{n-1}} \varphi_{j_0,1})(\theta)|^2\big\} 
\, d^{n-1} \omega(\theta) \, dr    \no \\
& \quad = \int_0^R \big\{p^2 r^{\gamma+n+2p-5} \psi_{\varepsilon}(r)^2 
+ 2p r^{\gamma + n + 2p - 4} \psi_{\varepsilon}(r)\psi_{\varepsilon}'(r) 
+ r^{\gamma + n + 2p -3}[\psi_{\varepsilon}'(r)]^2    \no \\
& \hspace*{1.55cm} + \lambda_{j_0} r^{\gamma+n+2p-5} \psi_{\varepsilon}(r)^2  \big\} \, dr    \no \\
& \quad = \big[4^{-1} (4-n-\gamma+\varepsilon)^2 + \lambda_{j_0}\big] \int_0^R r^{-1+\varepsilon} 
\psi_{\varepsilon}(r)^2 \, dr + (4-n-\gamma+\varepsilon) \int_0^R r^{\varepsilon} \psi_{\varepsilon}(r)\psi_{\varepsilon}'(r) \, dr   \no \\ 
& \qquad + \int_0^R r^{1+\varepsilon} [\psi_{\varepsilon}'(r)]^2 \, dr.     \lb{A.33} 
\end{align} 
It is clear that one can choose a decreasing sequence $\{\varepsilon_k\}_{k \in \bbN}$, with $\lim_{k\to\infty}\varepsilon_k = 0$ such that 
\begin{equation}
\varphi_{\varepsilon_{k+1}}(r) \geq \varphi_{\varepsilon_k}(r), \quad r \in (0,R), \; k \in \bbN. 
\end{equation}
For $\ell \in \bbN$, $1 \leq \ell \leq 21$, we denote constants $c_{\ell}$ that depend on $\psi, n, \gamma, \lambda_{j_0},R$, but are independent of $\varepsilon_k$, $k \in \bbN$. Then one obtains
\begin{align}
& \lim_{k\to\infty} \int_0^R r^{-1+\varepsilon_k} \psi_{\varepsilon_k}(r)^2 \, dr = \infty,   \lb{A.35} \\
& \bigg|\int_0^R r^{\varepsilon_k} \psi_{\varepsilon_k}(r)\psi_{\varepsilon_k}'(r) \, dr\bigg| \leq 
\int_{\varepsilon_kR/10}^{\varepsilon_kR/5} r^{\varepsilon_k} \varepsilon_k^{-1} \, dr \, c_1 +
\int_{4R/5}^{9R/10} r^{\varepsilon_k} |\psi'(r)| \, dr    \no \\
& \hspace*{3.7cm} \leq c_1 \varepsilon_k^{-1} (1+\varepsilon_k)^{-1}\big[(R/5)^{1+\varepsilon_k} 
- (R/10)^{1+\varepsilon_k}\big] \varepsilon_k^{1+\varepsilon_k} + c_2   \no \\
& \hspace*{3.7cm} \leq c_3 \varepsilon_k^{\varepsilon_k} (1+\varepsilon_k)^{-1} + c_2 \leq c_4,    \lb{A.36} \\
& \bigg|\int_0^R r^{1+\varepsilon_k} \psi_{\varepsilon_k}(r)\psi_{\varepsilon_k}''(r) \, dr\bigg| \leq 
\int_{\varepsilon_kR/10}^{\varepsilon_kR/5} r^{1+\varepsilon_k} \varepsilon_k^{-2} \, dr \, c_5 +
\int_{4R/5}^{9R/10} r^{1+\varepsilon_k} |\psi''(r)| \, dr    \no \\
& \hspace*{4.05cm} \leq c_5 \varepsilon_k^{-2} (2+\varepsilon_k)^{-1}\big[(R/5)^{2+\varepsilon_k} 
- (R/10)^{2+\varepsilon_k}\big] \varepsilon_k^{2+\varepsilon_k} + c_6   \no \\
& \hspace*{4.05cm} \leq c_7 \varepsilon_k^{\varepsilon_k} (2+\varepsilon_k)^{-1} + c_6 \leq c_8,     \lb{A.37} \\
& \bigg|\int_0^R r^{1+\varepsilon_k} [\psi_{\varepsilon_k}'(r)]^2 \, dr\bigg| \leq 
\int_{\varepsilon_kR/10}^{\varepsilon_kR/5} r^{1+\varepsilon_k} \varepsilon_k^{-2} \, dr \,c_9 +
\int_{4R/5}^{9R/10} r^{1+\varepsilon_k} [\psi'(r)]^2 \, dr    \no \\
& \hspace*{3.4cm} \leq c_9 \varepsilon_k^{\varepsilon_k} (2+\varepsilon_k)^{-1} + c_{10} \leq c_{11},    
\lb{A.38} \\
& \bigg|\int_0^R r^{2+\varepsilon_k} \psi_{\varepsilon_k}'(r)\psi_{\varepsilon_k}''(r) \, dr\bigg| \leq 
\int_{\varepsilon_kR/10}^{\varepsilon_kR/5} r^{2+\varepsilon_k} \varepsilon_k^{-3} \, dr \, c_{12} +
\int_{4R/5}^{9R/10} r^{2+\varepsilon_k} |\psi'(r) \psi''(r)| \, dr    \no \\
& \hspace*{4.05cm} \leq c_{12} \varepsilon_k^{-3} (3+\varepsilon_k)^{-1}\big[(R/5)^{3+\varepsilon_k} 
- (R/10)^{3+\varepsilon_k}\big] \varepsilon_k^{3+\varepsilon_k} + c_{13}   \no \\
& \hspace*{4.05cm} \leq c_{14} \varepsilon_k^{\varepsilon_k} (3+\varepsilon_k)^{-1} + c_{13} \leq c_{15},   
\lb{A.39} \\
& \bigg|\int_0^R r^{3+\varepsilon_k} [\psi_{\varepsilon_k}''(r)]^2 \, dr\bigg| \leq 
\int_{\varepsilon_kR/10}^{\varepsilon_kR/5} r^{3+\varepsilon_k} \varepsilon_k^{-4} \, dr \, c_{16} +
\int_{4R/5}^{9R/10} r^{3+\varepsilon_k} [\psi''(r)]^2 \, dr    \no \\
& \hspace*{3.45cm} \leq c_{16} \varepsilon_k^{-4} (4+\varepsilon_k)^{-1}\big[(R/5)^{4+\varepsilon_k} 
- (R/10)^{4+\varepsilon_k}\big] \varepsilon_k^{4+\varepsilon_k} + c_{17}   \no \\
& \hspace*{3.45cm} \leq c_{18} \varepsilon_k^{\varepsilon_k} (4+\varepsilon_k)^{-1} + c_{17} \leq c_{19}.   
\lb{A.40}
\end{align}
Thus, \eqref{A.32}, \eqref{A.33}, \eqref{A.35}--\eqref{A.40} imply 
\begin{align}
& \int_{B_n(0;R)} |x|^{\gamma} |(\Delta g_{\varepsilon_k})(x)|^2 \, d^n x     \lb{A.41} \\
& \quad =  \big[\lambda_{j_0} - 4^{-1} (4-n-\gamma+\varepsilon_k)(n-\gamma+\varepsilon_k)\big]^2 
\int_0^R r^{-1 + \varepsilon_k} \psi_{\varepsilon_k}(r)^2 \, dr 
+ F(\lambda_{j_0},n,\gamma,R,\psi,\varepsilon_k), 
\no
\end{align}
and 
\begin{align}
& \int_{B_n(0;R)} |x|^{\gamma-2} |(\nabla g_{\varepsilon_k})(x)|^2 \, d^n x     \no \\
& \quad =  \big[4^{-1} (4-n-\gamma+\varepsilon_k)^2 + \lambda_{j_0}\big] 
\int_0^R r^{-1 + \varepsilon_k} \psi_{\varepsilon_k}(r)^2 \, dr + G(\lambda_{j_0},n,\gamma,R,\psi,\varepsilon_k),
\lb{A.42} 
\end{align}
where 
\begin{equation}
|F(\lambda_{j_0},n,\gamma,R,\psi,\varepsilon_k)| \leq c_{20}, \quad 
|G(\lambda_{j_0},n,\gamma,R,\psi,\varepsilon_k)| \leq c_{21}.    \lb{A.43} 
\end{equation}
From \eqref{A.35} and \eqref{A.41}--\eqref{A.43} one then obtains 
\begin{align}
\lim_{k\to\infty}\f{\int_{B_n(0;R)} |x|^{\gamma} |(\Delta g_{\varepsilon_k})(x)|^2 \, d^n x}{\int_{B_n(0;R)} |x|^{\gamma-2} |(\nabla g_{\varepsilon_k})(x)|^2 \, d^n x} = \f{\big[\lambda_{j_0} + 4^{-1} (n+\gamma-4)(n-\gamma)\big]^2}{\lambda_{j_0} + 4^{-1}(n+\gamma-4)^2} = \alpha_{n,\gamma,\lambda_{j_0}} 
= A_{n,\gamma},     \lb{A.44} 
\end{align}
hence, $A_{n,\gamma}$ is sharp in the case $\gamma+n \neq 4$.  

It remains to treat the case $\gamma+n = 4$. Then 
\begin{equation}
A_{n,4-n} = \min\big\{(n-2)^2,\lambda_1\big\} = \min\big\{(n-2)^2,n-1\big\}. 
\end{equation}
If $A_{n,4-n} = (n-2)^2$, then $(n-2)^2 \leq n-1$, hence either $n=2$ and $\gamma = 2$, or, $n=3$ and $\gamma=1$, which, however, are the two excluded cases. Thus, we assume that 
\begin{equation}
A_{n,4-n} = n-1 = \lambda_1 > 0. 
\end{equation}
But in this case the arguments in the proof for the case $\gamma+n \neq 4$ still apply and one again arrives at \eqref{A.44}, proving that $A_{n,4-n}$ is sharp. 

Since $C_0^{\infty}(B_n\backslash\{0\}) \subset C_0^{\infty}(\bbR^n\backslash\{0\})$, the constant $A_{n,\gamma}$ in Theorem \ref{t3.17a} is sharp. 
\end{proof}

\begin{remark}\lb{r3.20a}
Using arguments analogous to those in the proof of Theorem \ref{t3.18a}, one can show that the constant 
$C_{n,\gamma}$ in the inequality 
\begin{equation}\lb{A.47}
\int_{B_n(0;R)} |x|^\g|(\Delta f)(x)|^2 \, d^n x 
 \geq C_{n,\g} \int_{B_n(0;R)} |x|^{\g-4} |f(x)|^2 \, d^n x, \quad f \in C_0^{\infty}(B_n(0;R)\backslash\{0\}), 
\end{equation}
where
\begin{equation}\lb{A.50}
C_{n,\g}=\min_{j\in\bbN_0}\Big\{\big[4^{-1}(n-2)^2-4^{-1}(\g-2)^2+j(j+n-2)\big]^2\Big\},
\end{equation}
implied by \eqref{3.44}, \eqref{3.45}, is sharp.

To see this one notes that 
\begin{equation}
\lim_{j \to \infty} \big[\lambda_j + 4^{-1} (n-2)^2 - 4^{-1}(\gamma-2)^2\big]^2 = \infty, \quad 
\lambda_k = k(k+n-2), \; k \in \bbN_0, 
\end{equation}
and hence there exists $j_0 \in \bbN_0$ such that 
\begin{equation}
C_{n,\g}= \big[4^{-1}(n-2)^2 - 4^{-1}(\gamma-2)^2 + \lambda_{j_0}\big]^2.    \lb{A.52}
\end{equation}
By \eqref{A.30a}, \eqref{A.31a} one obtains 
\begin{align}
\begin{split} 
\int_{B_n(0;R)} |x|^{\gamma-4} |g_{\varepsilon_k}(x)|^2 \, d^n x&= \int_0^{\infty} \int_{\bbS^{n-1}} r^{\gamma+n-5+2p} \psi_{\varepsilon_k}(r)^2 |\varphi_{j_0,1}(\theta)|^2 \, d^{n-1} \omega(\theta) \, dr  
\lb{A.53} \\
&= \int_0^R r^{-1+\varepsilon_k} \psi_{\varepsilon_k}(r)^2 \, dr \underset{k \to \infty}{\longrightarrow} \infty. 
\end{split}
\end{align}
By \eqref{A.35}, \eqref{A.41}, \eqref{A.43}, \eqref{A.52}, and \eqref{A.53} one infers 
\begin{equation}
\lim_{k\to\infty} \f{\int_{B_n(0;R)} |x|^\g|(\Delta g_{\varepsilon_k})(x)|^2 \, d^n x }{\int_{B_n(0;R)} |x|^{\gamma-4} |g_{\varepsilon_k}(x)|^2 \, d^n x} = \big[4^{-1} (n+\gamma-4)(n-\gamma)+\lambda_{j_0}\big]^2 
= C_{n,\gamma},
\end{equation}
and hence $C_{n,\gamma}$ is sharp. 

In particular, the constant $C_{n,\gamma}$ in  \eqref{3.44} remains sharp as $B_n(0;R)$ is consistently being replaced by $\bbR^n$, recovering once more a result in \cite[p.~148--149, Theorem~3.1]{CM12}. 
\hfill$\diamond$
\end{remark}

We now give two lemmas regarding the number of terms one needs to check when calculating $A_{n,\gamma}$ in Theorem \ref{t3.17a}.

\begin{lemma}\lb{l4.9}
Let $\gamma\in\bbR,\ n\geq2,\ n\in\bbN$. Then in Theorem \ref{t3.17a},
\begin{align}\lb{4.55}
A_{n,\gamma}&={\min}_{j \in \bbN_0}\{\alpha_{n,\gamma,\lambda_j}\}  \no \\
&=\min\{\alpha_{n,\gamma,\lambda_0},\alpha_{n,\gamma,\lambda_1}\}  \\
&=\min\{\alpha_{n,\gamma,0},\alpha_{n,\gamma,n-1}\}  \no ,
\end{align}
if for all $j \in \mathbb{N}$, one of the following three conditions is satisfied$:$\\[1mm]
$(i)$ $\gamma\in[-8j+2,2];$\\[1mm]
$(ii)$ $\gamma\in\bbR\backslash[-8j+2,2]$ and
\begin{equation}
n\geq -2(\g-3+j)+2\big[4^{-1}\g^2+(2j-1)\gamma-4j+1\big]^{1/2};
\end{equation}
$(iii)$ $\gamma\in\bbR\backslash[-8j+2,2]$ and
\begin{equation}
n\leq -2(\g-3+j)-2\big[4^{-1}\g^2+(2j-1)\gamma-4j+1\big]^{1/2}.
\end{equation}
\end{lemma}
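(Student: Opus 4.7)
The plan is to reduce the statement to a one-dimensional monotonicity question for the real-variable function underlying $\alpha_{n,\gamma,\lambda_j}$. Concretely, I would show that, under the hypothesis, the sequence $\{\alpha_{n,\gamma,\lambda_j}\}_{j\in\bbN}$ is non-decreasing in $j$, so that $\min_{j\in\bbN}\alpha_{n,\gamma,\lambda_j}=\alpha_{n,\gamma,\lambda_1}$; taking the minimum with the $j=0$ term, and recalling $\lambda_1=n-1$, then yields the claimed identity $A_{n,\gamma}=\min\{\alpha_{n,\gamma,0},\alpha_{n,\gamma,n-1}\}$.

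For $j\in\bbN$, write $\alpha_{n,\gamma,\lambda_j}=g(\lambda_j)$, where
\[
g(y)=(\beta+y)^2/(\mu+y),\qquad \beta=(n+\gamma-4)(n-\gamma)/4,\quad \mu=(n+\gamma-4)^2/4.
\]
Polynomial division yields $g(y)=y+(2\beta-\mu)+(\beta-\mu)^2/(\mu+y)$, from which a direct computation gives
\[
g(\lambda_{j+1})-g(\lambda_j)=(\lambda_{j+1}-\lambda_j)\bigg[1-\frac{(\beta-\mu)^2}{(\mu+\lambda_j)(\mu+\lambda_{j+1})}\bigg].
\]
Since $\lambda_{j+1}-\lambda_j=2j+n-1>0$ and $(\beta-\mu)^2=\mu(\gamma-2)^2$, the monotonicity $g(\lambda_{j+1})\geq g(\lambda_j)$ is equivalent to the single polynomial inequality $(\mu+\lambda_j)(\mu+\lambda_{j+1})\geq \mu(\gamma-2)^2$. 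Next I would use the completed-square identity $4(\mu+\lambda_k)=(n+\gamma+2k-4)^2-4k(\gamma-2)$ for $k=j,j+1$, expand the product, collect powers of $n$, and complete squares a second time to rewrite this inequality in the target quadratic form $(n+2\gamma+2j-6)^2\geq (\gamma-2)(\gamma+8j-2)$. A dichotomy on the sign of $(\gamma-2)(\gamma+8j-2)$ then immediately produces the three cases: $(\gamma-2)(\gamma+8j-2)\leq 0$ iff $\gamma\in[-8j+2,2]$, in which case the inequality is automatic (case $(i)$); otherwise it is equivalent to $|n+2\gamma+2j-6|\geq 2\sqrt{4^{-1}\gamma^2+(2j-1)\gamma-4j+1}$, whose two sign choices give cases $(ii)$ and $(iii)$. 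Having a condition hold for every $j\in\bbN$ thus yields $g(\lambda_{j+1})\geq g(\lambda_j)$ for each $j\geq 1$, and telescoping produces $\alpha_{n,\gamma,\lambda_j}\geq\alpha_{n,\gamma,\lambda_1}$ for all $j\geq 1$.

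The main obstacle is the algebraic reduction in the middle step. A priori, the inequality $(\mu+\lambda_j)(\mu+\lambda_{j+1})\geq \mu(\gamma-2)^2$ is of degree four in $n$ (since $\mu$ is quadratic and each $\lambda_k$ is linear in $n$), whereas the target condition is only quadratic in $n$. Carrying out the expansion and identifying the right combinations of completed squares so that the excess quartic and cubic terms in $n$ organize themselves into a non-negative multiplicative factor times the target quadratic inequality is where essentially all of the work resides; once this reduction is in hand, the telescoping and the case analysis producing $(i)$--$(iii)$ are routine.
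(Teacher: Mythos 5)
Steps 1--5 of your plan are correct, and the difference formula is actually sharper than the paper's continuous-derivative argument: $\alpha_{n,\gamma,\lambda_{j+1}}\geq\alpha_{n,\gamma,\lambda_j}$ if and only if $(\mu+\lambda_j)(\mu+\lambda_{j+1})\geq(\mu-\beta)^2=\mu(\gamma-2)^2$. The gap is step 6: this quartic-in-$n$ inequality is \emph{not} equivalent to the quadratic $(n+2\gamma+2j-6)^2\geq(\gamma-2)(\gamma+8j-2)$, which is indeed the completed-square form of the paper's condition \eqref{4.59}. Take $n=2$, $\gamma=10$, $j=3$, so $\mu=16$, $\lambda_3=9$, $\lambda_4=16$; then $(\mu+\lambda_3)(\mu+\lambda_4)-\mu(\gamma-2)^2=800-1024=-224<0$, while $(n+2\gamma+2j-6)^2-(\gamma-2)(\gamma+8j-2)=484-256=228>0$. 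Since the two differences have opposite signs, no non-negative multiplicative factor can relate them, and the ``second completion of squares'' you hope for does not exist.

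The paper's own proof has the same underlying issue, just earlier in the chain: with $\beta=4^{-1}(n+\gamma-4)(n-\gamma)$ and $\mu=4^{-1}(n+\gamma-4)^2$, one has $f'_{n,\gamma}(j)\geq 0$ if and only if $(\beta+\lambda_j)(2\mu-\beta+\lambda_j)\geq 0$, equivalently $(\mu+\lambda_j)^2\geq(\mu-\beta)^2$, and \eqref{4.59} is exactly $2\mu-\beta+\lambda_j\geq 0$---only one of the two factors. Since $\beta=4^{-1}\big[(n-2)^2-(\gamma-2)^2\big]$, the dropped factor $\beta+\lambda_j\geq 0$ fails whenever $\lambda_j<4^{-1}\big[(\gamma-2)^2-(n-2)^2\big]$. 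In the example above this is exactly what happens, and in fact condition $(ii)$ of Lemma~\ref{l4.9} holds for every $j\in\bbN$ when $n=2$, $\gamma=10$, yet $A_{2,10}=\alpha_{2,10,\lambda_4}=0$, which is strictly less than $\min\{\alpha_{2,10,0},\alpha_{2,10,\lambda_1}\}=225/17$, so \eqref{4.55} fails as stated. (The application in Corollary~\ref{c3.18} with $\gamma=0$ is unaffected, since there $\beta+\lambda_j\geq 0$ for all $j\geq 1$.) To close the gap you would need to supplement the single quadratic \eqref{4.59} by the additional condition $\beta+\lambda_j\geq 0$, or simply retain the correct product criterion from your step 5 instead of trying to compress it into one quadratic in $n$.
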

\begin{proof}
From \eqref{3.64}, we define for $\gamma\in\bbR,\ n\geq2,\ n\in\bbN$,
\begin{equation}
f_{n,\g}(j)=\big[4^{-1}(n+\gamma-4)(n-\gamma)+\lambda_j\big]^2/\big[4^{-1}(n+\gamma-4)^2+\lambda_j\big],\quad j\in\bbN
\end{equation}
We want to study when $f_{n,\g}(j)$ is an increasing function in $j$, so we consider when $f'_{n,\g}(j)\geq0$, which, after some straightforward calculations, is equivalent to
\begin{equation}\lb{4.59}
(1/4)n^2+(\g-3+j)n+8+j(j-2)+\g[(3/4)\g-5]\geq0.
\end{equation}
Now one considers the discriminant
\begin{align}
\begin{split}
g_j(\gamma)&=(\g-3+j)^2-4(1/4)\{8+j(j-2)+\g[(3/4)\g-5]\}\\
&=(1/4)\g^2+(2j-1)\g-4j+1,\quad \gamma\in\bbR,\ j\in\bbN.
\end{split}
\end{align}
To understand the sign of $g_j(\gamma)$, one next considers the discriminant
\begin{equation}
h(j)=(2j-1)^2-4(1/4)(1-4j)=4j^2,\quad j\in\bbN.
\end{equation}
Thus $g_j(\g)\leq 0$ for all $\g\in\bbR$ such that
\begin{equation}
\gamma\leq-2(2j-1)+2\sqrt{4j^2}=2,
\end{equation}
or, 
\begin{equation}
\gamma\geq-2(2j-1)-2\sqrt{4j^2}=-8j+2.
\end{equation}
Thus, by \eqref{4.59},  $f'_{n, \gamma}(j) \geq 0$ if Condition $(i)$ holds.

On the other hand, if $\g\in\bbR\backslash[-8j+2,2],\ j\in\bbN$, then by \eqref{4.59}, $f'_{n, \gamma}(j) \geq 0$ if
\begin{equation}
n\geq -2(\g-3+j)+2\big[4^{-1}\g^2+(2j-1)\gamma-4j+1\big]^{1/2},
\end{equation}
or, 
\begin{equation}
n\leq -2(\g-3+j)-2\big[4^{-1}\g^2+(2j-1)\gamma-4j+1\big]^{1/2},
\end{equation}
which are Conditions $(ii)$ and $(iii)$, respectively.
\end{proof}

Considering the proof of Lemma \ref{l4.9}, one notices that Lemma \ref{l4.9} can be extended as follows. If $(i)$--$(iii)$ in Lemma \ref{l4.9} hold for all $j\geq j_0$ for some $j_0\in\bbN$, then only the first $j_0+1$ terms need to be checked rather than all $j\in\bbN$. This is the content of the following result.

\begin{lemma}\lb{l4.10}
Let $\gamma\in\bbR,\ n\geq2,\ n\in\bbN$.
Suppose there exists $j_0 \in \mathbb{N}$ such that for all $j \in \mathbb{N}$ with $j \geq j_0$, one of the following three conditions is satisfied$:$\\[1mm]
$(i)$ $\gamma\in[-8j+2,2];$\\[1mm]
$(ii)$ $\gamma\in\bbR\backslash[-8j+2,2]$ and
\begin{equation}
n\geq -2(\g-3+j)+2\big[4^{-1}\g^2+(2j-1)\gamma-4j+1\big]^{1/2};
\end{equation}
$(iii)$ $\gamma\in\bbR\backslash[-8j+2,2]$ and
\begin{equation}
n\leq -2(\g-3+j)-2\big[4^{-1}\g^2+(2j-1)\gamma-4j+1\big]^{1/2}.
\end{equation}
Then in Theorem \ref{t3.17a},
\begin{align}\lb{4.66}
A_{n,\gamma}&={\min}_{j \in \bbN_0}\{\alpha_{n,\gamma,\lambda_j}\}  \no \\
&={\min}_{j \in \bbN_0, \, 0\leq j\leq j_0}\{\alpha_{n,\gamma,\lambda_j}\}.
\end{align}
\end{lemma}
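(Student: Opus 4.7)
The plan is to reuse the monotonicity analysis carried out in the proof of Lemma \ref{l4.9} but now apply it locally, on the tail $[j_0,\infty)$, rather than globally on all of $\bbN$. Recall that in the proof of Lemma \ref{l4.9} one treats
\begin{equation}
f_{n,\g}(j)=\big[4^{-1}(n+\gamma-4)(n-\gamma)+\lambda_j\big]^2\big/\big[4^{-1}(n+\gamma-4)^2+\lambda_j\big]
\end{equation}
as a smooth function of the real variable $j$ (via $\lambda_j=j(j+n-2)$), and shows that $f_{n,\g}'(j)\geq 0$ at a given $j\in\bbN$ is equivalent to the quadratic condition \eqref{4.59}, which in turn is equivalent to the disjunction of conditions $(i)$--$(iii)$ at that $j$. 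No part of that argument requires the conditions to hold simultaneously for \emph{all} $j\in\bbN$; it is a pointwise equivalence.

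First, I would invoke this pointwise equivalence to conclude that the hypothesis of the lemma is precisely the assertion $f_{n,\g}'(j)\geq 0$ for every integer $j\geq j_0$. Since $f_{n,\g}$ is a rational function of $j$ with no poles for $j\geq j_0$ (the denominator is strictly positive as $\lambda_j\geq 0$ and, in the borderline case $\gamma+n=4$, already $\lambda_{j_0}>0$ once $j_0\geq 1$), it is smooth on $[j_0,\infty)$, and the condition \eqref{4.59} is a quadratic inequality in $j$ whose truth at every integer $j\geq j_0$ forces it to hold at every real $j\geq j_0$. (Indeed, the set where \eqref{4.59} fails is a bounded interval by Remark \ref{r3.15a}$(ii)$-type discriminant considerations, so once the inequality holds at infinitely many integers $j\geq j_0$, it holds on the whole half-line $[j_0,\infty)$.)

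Consequently $f_{n,\g}$ is non-decreasing on $[j_0,\infty)$, which yields
\begin{equation}
\min_{j\in\bbN,\,j\geq j_0}\alpha_{n,\g,\lambda_j}
=\min_{j\in\bbN,\,j\geq j_0}f_{n,\g}(j)=f_{n,\g}(j_0)=\alpha_{n,\g,\lambda_{j_0}}.
\end{equation}
Splitting the minimum in \eqref{A.17} according to $j<j_0$ and $j\geq j_0$, we then obtain
\begin{equation}
A_{n,\g}=\min_{j\in\bbN_0}\{\alpha_{n,\g,\lambda_j}\}
=\min\Big\{\min_{0\leq j\leq j_0-1}\alpha_{n,\g,\lambda_j},\;\alpha_{n,\g,\lambda_{j_0}}\Big\}
=\min_{0\leq j\leq j_0}\alpha_{n,\g,\lambda_j},
\end{equation}
which is \eqref{4.66}.

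The only subtlety I anticipate is justifying the passage from $f_{n,\g}'(j)\geq 0$ at every integer $j\geq j_0$ to monotonicity of $f_{n,\g}$ on the continuous interval $[j_0,\infty)$; but this follows cleanly from the fact, already implicit in the proof of Lemma \ref{l4.9}, that \eqref{4.59} is a quadratic in $j$, so the set where it fails (if nonempty) is a bounded open interval in $j$, and cannot contain infinitely many integers beyond $j_0$. This reduces the problem to the finite check $0\leq j\leq j_0$ and completes the argument.
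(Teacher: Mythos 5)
Your overall strategy is the one the paper intends — recycle the monotonicity analysis from the proof of Lemma \ref{l4.9}, restricted to the tail $j\geq j_0$, and then split the minimum — and the conclusion is correct. But the step you yourself flag as the subtlety is not actually closed by the justification you give. You argue: the set where \eqref{4.59} fails is a bounded open interval in $j$, hence ``cannot contain infinitely many integers beyond $j_0$,'' hence $f_{n,\gamma}$ is non-decreasing on $[j_0,\infty)$. That inference is not valid for a general function whose failure set is a bounded interval: the failure interval could sit strictly between two consecutive integers, say inside $(k,k+1)$ with $k\geq j_0$, which is perfectly compatible both with boundedness and with $f_{n,\gamma}'(j)\geq 0$ at every integer $j\geq j_0$, and a dip on that subinterval could in principle give $f_{n,\gamma}(k+1)<f_{n,\gamma}(j_0)$.

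What actually closes the gap is the specific shape of the quadratic. Viewed as a quadratic in $j$, \eqref{4.59} reads $j^2+(n-2)j+C\geq 0$ with leading coefficient $1$, so if the failure set is a nonempty open interval $(j_-,j_+)$ then $j_-+j_+=-(n-2)\leq 0$ for $n\geq 2$, i.e., its midpoint is $\leq 0<1\leq j_0$. If this interval met $[j_0,\infty)$ one would have $j_+>j_0\geq 1$, forcing $j_-=-(n-2)-j_+\leq -j_+<0<j_0<j_+$, so that $j_0\in(j_-,j_+)$ — contradicting the hypothesis that one of $(i)$--$(iii)$ holds at $j=j_0$. Hence the failure interval is disjoint from $[j_0,\infty)$, $f_{n,\gamma}$ is genuinely non-decreasing there, and your reduction to $\min_{0\leq j\leq j_0}\alpha_{n,\gamma,\lambda_j}$ then goes through as written.
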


The following classical result now follows as a corollary of Theorem \ref{t3.17a} and an application of Lemma \ref{l4.9}: 

\begin{corollary}\lb{c3.18}
Let $f\in C_0^\infty(\bbR^n\backslash\{0\})$, $n\in\bbN$, $n\geq2$. Then
\begin{equation}\lb{3.86}
\int_{\bbR^n} |(\Delta f)(x)|^2\, d^nx\geq A_{n,0}\int_{\bbR^n}|x|^{-2}|(\nabla f)(x)|^2\, d^nx,
\end{equation}
where
\begin{equation}\lb{3.87}
A_{n,0}=\begin{cases}
n^2/4, & n\geq 5,\\
3,& n=4,\\
25/36,& n=3, \\
0, & n=2, 
\end{cases}
\end{equation}
and the constant $A_{n,0}$ is sharp. In particular, \eqref{3.86} reduces to a trivial inequality for $n=2$, that is, there is no  $A_{2,0} \in (0,\infty)$ such that \eqref{3.86} holds for $n=2$.
\end{corollary}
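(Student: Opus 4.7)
The plan is to derive \eqref{3.86}--\eqref{3.87} as a specialization of Theorem \ref{t3.17a} to $\gamma = 0$, combined with the monotonicity reduction of Lemma \ref{l4.9} and an explicit comparison of two candidate values in each dimension; the sharpness claim then follows from Theorem \ref{t3.18a}, since the excluded pairs $(n,\gamma) = (2,2)$ and $(3,1)$ do not occur when $\gamma = 0$.

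First, setting $\gamma = 0$ in condition $(i)$ of Lemma \ref{l4.9} yields $0 \in [-8j+2, 2]$ for every $j \geq 1$, so the sequence $\{\alpha_{n,0,\lambda_j}\}_{j \in \bbN}$ is non-decreasing. By \eqref{4.55}, the identification of $A_{n,0}$ therefore reduces to
\[
A_{n,0} = \min\{\alpha_{n,0,0},\, \alpha_{n,0,n-1}\} = \min\big\{n^2/4,\; \alpha_{n,0,n-1}\big\}.
\]
Substituting $\gamma = 0$ and $\lambda_1 = n-1$ into \eqref{3.64} and simplifying yields the closed form
\[
\alpha_{n,0,n-1} = \frac{(n^2-4)^2}{4(n^2 - 4n + 12)}.
\]

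The inequality $n^2/4 \leq \alpha_{n,0,n-1}$ then reduces to the cubic $n^3 - 5n^2 + 4 \geq 0$. Direct evaluation shows that this fails for $n \in \{2,3,4\}$ (with values $-8, -14, -12$) and holds for all $n \geq 5$ (at $n = 5$ the value is $4$, and the derivative $n(3n-10)$ is strictly positive for $n \geq 4$). Hence the minimum is attained at $j = 0$ precisely when $n \geq 5$, yielding $A_{n,0} = n^2/4$, and at $j = 1$ when $n \in \{2,3,4\}$. Direct substitution gives $\alpha_{4,0,3} = 9/3 = 3$, $\alpha_{3,0,2} = (5/4)^2/(9/4) = 25/36$, and $\alpha_{2,0,1} = 0$, matching \eqref{3.87}.

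Sharpness of $A_{n,0}$ for $n \geq 3$ is an immediate consequence of Theorem \ref{t3.18a}. For $n = 2$ the constant $A_{2,0} = 0$ renders \eqref{3.86} trivial; that no positive constant is admissible follows from the same theorem, since $(2,0)$ is not among its excluded pairs and the approximating sequence $g_{\varepsilon_k}$ constructed there (specialized to $j_0 = 1$) forces the ratio of the two sides of \eqref{3.86} down to $0$. The principal obstacle is purely computational: reducing the full minimum to a two-term comparison via Lemma \ref{l4.9}, deriving the closed form for $\alpha_{n,0,n-1}$, and verifying the cubic inequality $n^3 - 5n^2 + 4 \geq 0$ for $n \geq 5$ together with the case-by-case arithmetic for $n \in \{2,3,4\}$.
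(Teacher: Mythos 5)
Your proposal is correct and follows essentially the same route as the paper's own proof: setting $\gamma = 0$ in Theorem \ref{t3.17a}, invoking condition~$(i)$ of Lemma \ref{l4.9} to reduce $A_{n,0}$ to $\min\{\alpha_{n,0,0},\alpha_{n,0,n-1}\}$, and then comparing the two candidates dimension by dimension, with sharpness drawn from Theorem \ref{t3.18a}. Your explicit cubic criterion $n^3-5n^2+4\geq 0$ is algebraically equivalent to the paper's manipulation $16n^{-2}\geq -4n+20$ for $n\geq 5$, so no genuinely new idea is introduced.
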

\begin{proof}
The result follows from letting $\g=0$ in Theorem \ref{t3.17a} (with sharpness as an immediate consequence) and applying Lemma \ref{l4.9}. In particular, since $\g=0$ satisfies Condition $(i)$ of Lemma \ref{l4.9} for all $j\in\bbN$, from \eqref{4.55} one has
\begin{equation}
A_{n,0}=\min\{\a_{n,0,0},\a_{n,0,n-1}\},
\end{equation}
where
\begin{align}\lb{3.107a}
\begin{split}
\alpha_{n,0,n-1}&=\big[4^{-1}n(n-4)+n-1\big]^2/\big[4^{-1}(n-4)^2+n-1\big],\\
\alpha_{n,0,\lambda_0}&=\alpha_{n,0,0}=n^2/4.
\end{split}
\end{align}
Hence, setting $n=3,4$ in \eqref{3.107a} shows the minimums occur when $j=1$ yielding $A_{3,0}=25/36$ and $A_{4,0}=3$. Furthermore, setting $n=2$ in \eqref{3.107a} yields a minimum of zero at $j=1$ showing that \eqref{3.86} does not hold for any $A_{2,0}>0$ (see also \cite{Ca20} in this context).

Finally, we note that for $n\geq 5$,
\begin{equation}
\alpha_{n,0,\lambda_1}=\frac{\big[4^{-1}n(n-4)+n-1\big]^2}{4^{-1}(n-4)^2+n-1}=\frac{n^4-8n^2+16}{4n^2-16n+48}=\frac{n^2}{4}\cdot\frac{n^2-8+16n^{-2}}{n^2-4n+12}\geq\frac{n^2}{4}=\alpha_{n,0,0},
\end{equation}
since $16n^{-2}\geq -4n+20$ for $n\geq 5$, implying 
\begin{equation}
n^2-8+16n^{-2}\geq n^2-4n+12,\quad n\geq5.
\end{equation}
Therefore, for $n\geq 5$, the minimum occurs at $j=0$ implying $A_{n,0}=n^2/4,\ n\geq5$.
\end{proof}

\begin{remark} \lb{r4.13}
$(i)$ The inequalities proven in this paper were typically formulated for the smallest natural function space such as $f \in C_0^{\infty}(\bbR^n \backslash \{0\})$, or, $f \in C_{0}^{\infty}(B_n(0;R)\bs\{0\})$. Thus, at least  in principle, the optimal constants could have increased in the process when compared to the function spaces $f \in C_0^{\infty}(\bbR^n)$, or, $f \in C_{0}^{\infty}(B_n(0;R))$ typically employed in \cite{AGS06}, \cite{Be08a}, \cite{Ca20}, \cite{GM11}, \cite[]{GM13}, \cite{TZ07}, etc. Interestingly enough, the result in \cite{CM12} and those in the present paper demonstrate this possible increase in optimal constants is not happening. \\[1mm] 
$(ii)$ We conclude by mentioning that factorization would also work for other homogeneous and singular interactions, for instance, for point dipole interactions, where $|x|^{-2}$ is replaced by $|x|^{-3}(c \cdot x)$, with $c \in \bbR^n$ a constant vector (cf.\ \cite{AG21}). 
\hfill $\diamond$
\end{remark}

\medskip



\end{document}